\theoremstyle{plain}
\newtheorem{theorem}{Theorem}[section]
\newtheorem{lemma}[theorem]{Lemma}
\newtheorem{proposition}[theorem]{Proposition}
\newtheorem{corollary}[theorem]{Corollary}
\theoremstyle{definition}
\newtheorem{definition}[theorem]{Definition}
\theoremstyle{remark}
\numberwithin{equation}{section}
\def\dom{{\rm dom}}
\def\leqT{\leq_\mathrm{T}}
\def\IN{\mathbb{N}} 
\def\IZ{\mathbb{Z}} 
\def\IQ{\mathbb{Q}} 
\def\IR{\mathbb{R}} 
\def\EC{\mathbf{EC}} 
\def\NC{\mathbf{NC}} 
\def\LC{\mathbf{LC}} 
\def\WC{\mathbf{WC}} 
\def\CA{\mathbf{CA}} 
\title{Nearly Computable Real Numbers}
\author{Peter Hertling and Philip Janicki\\
Fakult\"at f\"ur Informatik \\
Universit\"at der Bundeswehr M\"unchen \\
85577 Neubiberg, Germany \\[2mm]
Email: peter.hertling@unibw.de, philip.janicki@unibw.de}
\date{January 30, 2023}
\begin{document}

\maketitle


\begin{abstract}
In this article we call a sequence $(a_n)_n$ of elements of a metric space
{\em nearly computably Cauchy} if 
for every strictly increasing computable function $r:\IN\to\IN$ the sequence
$(d(a_{r(n+1)},a_{r(n)}))_n$ converges computably to $0$.
We show that there exists a strictly increasing sequence of rational numbers that
is nearly computably Cauchy and unbounded.
Then we call a real number $\alpha$ {\em nearly computable} if there exists a computable
sequence $(a_n)_n$ of rational numbers that converges to $\alpha$ and is nearly computably Cauchy.
It is clear that every computable real number is nearly computable,
and it follows from a result by Downey and LaForte (2002) that
there exists a nearly computable and left-computable number that is not computable.
We observe that the set of nearly computable real numbers is a real closed field
and closed under computable real functions with open domain,
but not closed under arbitrary computable real functions.
Among other things we strengthen results by Hoyrup (2017) 
and by Stephan and Wu (2005) by showing
that any nearly computable real number that is not computable is weakly $1$-generic
(and, therefore, hyperimmune and not Martin-L{\"o}f random)
and strongly Kurtz random (and, therefore, not $K$-trivial),
and we strengthen a result by Downey and LaForte (2002) by showing that no promptly simple set
can be Turing reducible to a nearly computable real number.

\bigskip

\noindent
{\bf Keywords:} nearly computable numbers, left-computable numbers, computable
convergence, real closed field, hyperimmune, Martin-L{\"o}f random,
strongly Kurtz random, $K$-trivial, weakly $1$-generic, promptly simple.
\bigskip

\noindent
{\bf AMS classification:} 03D78, 03D25, 03D32
\end{abstract}

\maketitle

\section{Introduction}
\label{section:intro}

A real number $x$ is called {\em computable} if there exists a computable sequence $(q_n)_n$
of rational numbers with $|x - q_n|\leq 2^{-n}$, for all $n$.
From a computability-theoretic point of view this set is certainly the most important subset
of the field of real numbers.
But there are also larger subsets that are of interest from a computability-theoretic point of view.
For example, a real number $x$ is called {\em computably approximable}
if there exists a computable sequence of rational numbers that converges to $x$.
And a real number $x$ is called {\em left-computable}
if there exists a strictly increasing computable sequence of rational numbers that converges to $x$.
Let $\EC$ denote the set of all computable numbers, $\WC$ denote
the set of all differences of two left-computable numbers,
and $\CA$ denote the set of all computably approximable numbers.
It is well known that $\EC \subsetneq \WC \subsetneq \CA$ (see ~\cite{AWZ00}), and that all three subsets
$\EC$, $\WC$, and $\CA$ are real closed subfields of the field $\IR$ of all real numbers
(see~\cite[Theorem 6.3.10]{Wei00} and~\cite{Rai05b}).
The set $\LC$ of left-computable numbers plays an important role in the theory of algorithmic randomness;
see ~\cite{Nie09,DH10}.
And in a series of papers Rettinger and Zheng and other coauthors have analyzed various 
computability-theoretic sets of real numbers, mostly between $\WC$ and $\CA$;
for an overview see \cite{RZ21}.

In this article we consider a new computability-theoretic set of real numbers
that is only slightly larger than the set $\EC$ of computable real numbers.
We call a sequence $(a_n)_n$ of elements of a metric space
{\em nearly computably Cauchy} if it has the property
that for every strictly increasing computable function $r:\IN\to\IN$ the sequence
$(d(a_{r(n+1)},a_{r(n)}))_n$ converges computably to $0$.
We call a real number $\alpha$ {\em nearly computable} if there exists a computable
sequence $(a_n)_n$ of rational numbers that converges to $\alpha$ and is nearly computably Cauchy.
The idea behind this is the following.
The limit of any convergent sequence $(a_n)_n$ of real numbers can be written as the limit
of the series $\sum_{i=0}^\infty b_i$, where $b_i:=a_{i} - a_{i-1}$, for $i>0$, and $b_0:=a_0$.
Then the sequence $(b_n)_n$ converges to zero.
We are interested in the computability-theoretic properties of the real numbers that one obtains
as the limit of such a series when one imposes conditions on the convergence of the sequence $(b_n)_n$.
In this article we consider a strong condition by demanding that for any strictly increasing function
$r:\IN\to\IN$ the sequence $(\sum_{i=r(n)+1}^{r(n+1)} b_i)_n $ converges to zero computably.

In the following two short sections we first introduce some notation and then some basic notions
concerning sequences in metric spaces.
Then we define computable sequences and list some properties of computable sequences.
In Section~\ref{section:nearly-computably-Cauchy} we give various characterizations
of nearly computably Cauchy sequences in a metric space and in a normed vector space.
At the conference CCA 2018, after the presentation of \cite{Jan18},
M.~Schr{\"o}der asked the following question:
is every nearly computably Cauchy sequence necessarily a Cauchy sequence?
On the one hand, we show that this is true for nearly computably Cauchy sequences
that are additionally computable.
On the other hand, we show that there exists a strictly increasing sequence of rational numbers that is
nearly computably Cauchy and unbounded (and, therefore, not a Cauchy sequence).
We also make some observations about nearly computably Cauchy sequences that will turn out to be useful later.
In Section~\ref{section:nearly-computable-field} we study the set of all nearly computable real numbers.
It is a proper superset of the set of all computable real numbers.
We show that it is a real closed field. In fact, it is closed under computable functions with open domain.
But we also show that it is not closed under arbitrary computable functions.
In Section~\ref{section:nearly-computable-left-computable} we give several characterizations
of real numbers that are left-computable and nearly computable.
We note that it follows from a result by Downey and LaForte~\cite{DL02} that
there exists a nearly computable and left-computable number that is not computable.
We also discuss two cases where the requirement that a real number should be nearly computable
and left-computable and satisfy some additional condition forces the real number to be even computable.
For example, an argument by Downey, Hirschfeldt, and LaForte~\cite[Pages 105, 106]{DHL04}
shows that any strongly left-computable and nearly computable real number is even computable.
In Section~\ref{section:nearly-computable-randomness} we discuss three further results of the same type,
one by Hoyrup~\cite{Hoy17} and two by Stephan and Wu~\cite{SW05}.
Hoyrup~\cite{Hoy17} has shown that  any nearly computable and left-computable number
that is not computable is weakly $1$-generic.
We show that in this result the assumption that the real number should be left-computable can be omitted.
As any weakly $1$-generic real is hyperimmune~\cite{Kur83} and not Martin-L{\"o}f random
we obtain a similar strengthening of a result by Stephan and Wu~\cite{SW05},
who had shown that any nearly computable number that is left computable
is hyperimmune (and, therefore, not Martin-L{\"o}f random).
Finally, also in their result that any nearly computable number that is left computable
is strongly Kurtz random (and, therefore, not $K$-trivial) one can omit the assumption
that the real number should be left-computable.
In the final section, Section~\ref{section:promptly-simple},
we strengthen an observation by Downey and LaForte~\cite{DL02} in a similar way
by showing that no promptly simple set can be Turing reducible to a nearly computable real number.
Some of the results of this paper have been presented at the conference CCA 2018~\cite{Jan18},
others at the conference CCA 2022~\cite{HJ22}.

\section{Preliminaries}
\label{section:prelim}

Let $X$ and $Y$ be sets. A subset $f\subseteq X\times Y$ with the property that for any $x\in X$ there is at most
one $y\in Y$ with $(x,y)\in f$ is called a {\em function $f:\subseteq X\to Y$} with
{\em domain $\dom(f):=\{x\in X~:~(\exists y \in Y) (x,y) \in f\}$}. Instead of $(x,y)\in f$ we write $f(x)=y$.
For $x\in X$ we write $f(x)\downarrow$ if $x\in\dom(f)$ and $f(x)\uparrow$ if $x\not\in\dom(f)$.
If $\dom(f)=X$ then we may write the function $f$ as $f:X\to Y$ and call it a {\em total} function.
Let $Y^X$ be the set of all total functions from $X$ to $Y$.
Note that in the case $X=\emptyset$ there is exactly one function in $Y^X$, the empty function.
For $k\in\IN$ let $X^k :=X^{\{0,\ldots,k-1\}}$ be the set of all sequences of length $k$ of elements of $X$.
For $k=0$ this set contains exactly one element, the empty sequence.
Let $X^*:=\bigcup_{k=0}^\infty X^k$ be the set of all finite sequences of elements of $X$.
Let $\IN=\{0,1,2,\ldots\}$ as usual be the set of nonnegative integers or {\em natural numbers}.
The elements of $X^\IN$ are called {\em sequences} with elements in $x$, and a sequence $p:\IN\to X$
is often written in the form $(p(n))_n$ or $(p_n)_n$.
For any $k\in\IN$ and $p\in X^\IN$ let $p\upharpoonright k$ be the prefix of length $k$
of the sequence $p:\IN\to X$,
that is, the uniquely determined element $g\in X^k$ with $g\subseteq p$, in other words,
$p\upharpoonright k=p(0)\ldots p(k-1) = p_0\ldots p_{k-1}$.
Sometimes we will identify a subset $A\subseteq\IN$ with its
{\em characteristic function $\chi_A:\IN\to\{0,1\}$} defined by
\[ \chi_A(n) := \begin{cases}
     1 & \text{if } n \in A, \\
     0 & \text{if } n \not\in A,
     \end{cases} \]
for $n\in\IN$.
The function $\langle \cdot,\cdot\rangle:\IN^2\to\IN$ defined by
$\langle i,j\rangle:= \frac{(i+j)\cdot (i+j+1)}{2} + i$, for all $i,j\in\IN$,
is a computable bijection between $\IN^2$ and $\IN$.
Also its projections $\pi_1:\IN\to\IN$ and $\pi_2:\IN\to\IN$ defined by
$\pi_1(\langle i,j\rangle):=i$ and $\pi_2(\langle i,j\rangle):=j$, for all $i,j\in\IN$,
are computable.
We can extend this to a bijection $\langle \cdots \rangle:\IN^k\to\IN$ for any $k\geq 1$ by defining
$\langle i \rangle:=i$, for any $i$, and by
$\langle m_1,\ldots,m_k,m_{k+1}\rangle := \langle \langle m_1,\ldots,m_k\rangle, m_{k+1}\rangle$,
for $k\geq 2$ and $m_1,\ldots,m_k,m_{k+1} \in\IN$.

Let $\varphi_0,\varphi_1,\varphi_2,\ldots$ be a standard enumeration
of all possibly partial computable functions with domain and range in $\IN$.
For every $e\in\IN$ we write $W_e:=\dom(\varphi_e)$.
Note that then $W_0,W_1,W_2,\ldots$ is an effective list of all c.e.~subsets of $\IN$.
Let $h_u:\IN\to\IN^2$ be an injective computable function with
$h_u(\IN)=\{(e,n)\in\IN^2 \mid \varphi_e(n)\downarrow\}$.
For every $s\in\IN$ we write $W_e[s]:=\{n\in\IN ~:~ (e,n) \in \{h_u(0),\ldots,h_u(s-1)\}\}$.
Furthermore, sometimes for a c.e. set $D$ and for $s\in\IN$ we use the expression $D[s]$.
Then it is understood that we have fixed some $e$ with $D=W_e$ and write $D[s]$ for $W_e[s]$.

Let $\IQ$ be the set of all rational numbers and $\IR$ be the set of all real numbers.
In order to transfer computability notions from $\IN$ to $\IQ$ we use
the following surjective function $\nu_\IQ:\IN\to\IQ$ defined by
\[ \nu_\IQ( \langle a,b \rangle) :=\begin{cases} 
    \frac{-a/2}{b+1} & \text{if } a \text{ is even}, \\
    \frac{(a+1)/2}{b+1} & \text{if } a \text{ is odd},
    \end{cases} \]
for $a,b\in\IN$.
Instead of $\nu_\IQ$ one may use any equivalent numbering of $\IQ$;
see the end of Section~\ref{section:comp-sequences}.
For introducing useful computability notions on $\IR$ even any equivalent numbering
of a dense subset of $\IR$ would work as well; 
see again the end of Section~\ref{section:comp-sequences}.
For any $k\geq 1$ we define a surjective function
$\nu_\IQ^{(k)}:\IN\to \IQ^k$ by 
$\nu_\IQ^{(k)}(\langle i_1,\ldots,i_k\rangle):=(\nu_\IQ(i_1),\ldots,\nu_\IQ(i_k))$, for $i_1,\ldots,i_k\in\IN$
(note that $\nu_\IQ^{(1)}=\nu_\IQ$).

\section{Computably Convergent Sequences and Computably Cauchy Sequences}

A {\em metric space} is a pair $(X,d)$ consisting of a nonempty set $X$ and a function $d:X\times X\to\IR$,
the {\em metric} or {\em distance function}, satisfying the following three conditions:
\begin{enumerate}
\item
$(\forall x,y\in X)\ d(x,y)=0 \iff x=y$, 
\item
$(\forall x,y \in X)\ d(x,y)=d(y,x)$
(symmetry),
\item
$(\forall x,y,z \in X)\ d(x,z) \leq d(x,y) + d(y,z)$
(triangle inequality).
\end{enumerate}

We note that, if $(X,d)$ is a metric space then, for all $x,y \in X$, we have
$d(x,y) \geq 0$
(indeed, $d(x,y)= \frac{1}{2}\cdot (d(x,y)+d(y,x)) \geq \frac{1}{2}\cdot d(x,x)=0$).
We remind the reader of some classical notions and observations for sequences in $X$.

\begin{definition}
Let $(X,d)$ be a metric space.
A sequence $(x_n)_n$ in $X$ \emph{converges to an} element $y \in X$ if 
there exists a total function $g:\IN\to\IN$ such that, 
for all $m,n\in\IN$,
\[ \text{if } m\geq g(n) \text{ then } d\left( x_m, y \right) \leq 2^{-n} . \]
A function $g$ with this property is called a \emph{modulus of convergence of the sequence $(x_n)_n$}.
\end{definition}

It is easy to see and well known that,
if a sequence $(x_n)_n$ in a metric space $(X,d)$ converges to an element $y \in X$
and to an element $z\in X$ then $y=z$.
Thus, if a sequence $(x_n)_n$ in $X$ converges to an element $y$ then this element is uniquely determined.
We call it {\em the limit} of $(x_n)_n$ and write it $\lim_{n\to\infty} x_n$.

\begin{definition}
Let $(X,d)$ be a metric space.
A sequence $(x_n)_n$ in $X$ is a \emph{Cauchy sequence}
if there exists a total function $g:\IN\to\IN$ such that, for all $l,m,n\in\IN$,
\[ \text{if } l,m\geq g(n) \text{ then } d\left( x_l, x_m \right) \leq 2^{-n} . \]
A function $g$ with this property is called a \emph{Cauchy modulus of $(x_n)_n$}.
\end{definition}

The elementary proof of the following lemma is left to the reader.

\begin{lemma}
\label{lemma:conv-Cauchy}
Let $(X,d)$ be a metric space,
Let $(x_n)_n$ be a convergent sequence in $X$.
\begin{enumerate}
\item
If a function $g:\IN\to\IN$ is a modulus of convergence of $(x_n)_n$ then
the function $h:\IN\to\IN$ defined by $h(n):=g(n+1)$, for all $n\in\IN$, is a Cauchy modulus of $(x_n)_n$.
\item
The sequence $(x_n)_n$ in $X$ is a Cauchy sequence.
\item
A Cauchy modulus of $(x_n)_n$ is a modulus of convergence of $(x_n)_n$ as well.
\end{enumerate}
\end{lemma}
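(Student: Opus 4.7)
The plan is to handle parts (1), (2), (3) in order, with (2) as a one-line corollary of (1). Part (3) is the only one containing a nonmechanical idea, namely a passage to the limit, so I will spend most of the attention there.

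For part (1), I would fix $n\in\IN$ and $l,m\geq h(n)=g(n+1)$ and apply the defining inequality of the modulus of convergence at index $n+1$ twice, obtaining $d(x_l,y),d(x_m,y)\leq 2^{-(n+1)}$ where $y:=\lim_{k\to\infty} x_k$. The triangle inequality (together with the symmetry of $d$) then yields $d(x_l,x_m)\leq d(x_l,y)+d(y,x_m)\leq 2^{-n}$, showing that $h$ is a Cauchy modulus.

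For part (2), I would simply take a modulus of convergence $g$ of the convergent sequence $(x_n)_n$, which exists by the assumption of convergence, and invoke part (1) to obtain the Cauchy modulus $n\mapsto g(n+1)$; hence $(x_n)_n$ is a Cauchy sequence.

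For part (3), given a Cauchy modulus $g$ and the limit $y$, I would fix $n\in\IN$ and $m\geq g(n)$ and bound $d(x_m,y)\leq d(x_m,x_l)+d(x_l,y)$ for arbitrary $l\geq g(n)$. The first summand is at most $2^{-n}$ by the Cauchy property; the second can be made smaller than any prescribed $\varepsilon>0$ by choosing $l$ sufficiently large, using convergence of $(x_l)_l$ to $y$. Since one can pick $l$ both $\geq g(n)$ and satisfying $d(x_l,y)\leq\varepsilon$ (the convergence condition holds from some index on, so both constraints are compatible for large enough $l$), we get $d(x_m,y)\leq 2^{-n}+\varepsilon$ for every $\varepsilon>0$, and letting $\varepsilon\to 0$ yields $d(x_m,y)\leq 2^{-n}$, as required. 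There is no real obstacle here; the entire lemma reduces to the triangle inequality together with this single elementary limit argument.
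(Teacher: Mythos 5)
Your proof is correct in all three parts, and since the paper leaves this lemma's elementary proof to the reader, your argument (triangle inequality for (1), immediate corollary for (2), and the $\varepsilon$-limit argument for (3)) is exactly the standard route the paper intends. No gaps.
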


A metric space $(X,d)$ is called \emph{complete} if every Cauchy sequence is convergent.
Thus, a sequence $(x_n)_n$ in a complete metric space $(X,d)$ 
is convergent if, and only if, it is a Cauchy sequence.

Based on the notions above we can now define
computable versions of convergence and of the Cauchy condition.

\begin{definition}
Let $(X,d)$ be a metric space.
\begin{enumerate}
\item
A sequence $(x_n)_n$ in $X$ \emph{converges computably} or \emph{is computably convergent}
if it converges and there exists a computable modulus $g:\IN\to\IN$ of convergence of $(x_n)_n$.
\item
A sequence $(x_n)_n$ in $X$ is \emph{computably Cauchy}
if there exists a computable Cauchy modulus of $(x_n)_n$.
\end{enumerate}
\end{definition}

\begin{corollary}
\label{cor:conv-comp-Cauchy}
Let $(X,d)$ be a metric space.
\begin{enumerate}
\item
\label{cor:conv-comp-Cauchy-1}
If a sequence $(x_n)_n$ in $X$ converges computably
then it is computably Cauchy.
\item
\label{cor:conv-comp-Cauchy-2}
If a sequence $(x_n)_n$ in $X$ is computably Cauchy and converges then it converges computably.
\item
\label{cor:conv-comp-Cauchy-3}
A sequence in a complete metric space converges computably if, and only if, it is computably Cauchy.
\end{enumerate}
\end{corollary}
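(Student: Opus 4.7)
The plan is to derive the corollary directly from Lemma~\ref{lemma:conv-Cauchy} together with the definition of completeness; no new construction is required, so the main task is to keep track of which modulus we have and which one we need.

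For part~(\ref{cor:conv-comp-Cauchy-1}), I would start from a computable modulus of convergence $g:\IN\to\IN$ of $(x_n)_n$, whose existence is guaranteed by the assumption that $(x_n)_n$ converges computably. Applying Lemma~\ref{lemma:conv-Cauchy}(1), the function $h:\IN\to\IN$ defined by $h(n):=g(n+1)$ is a Cauchy modulus of $(x_n)_n$. Since $g$ is computable and $h$ is obtained from $g$ by an obviously computable transformation, $h$ is a computable Cauchy modulus of $(x_n)_n$, showing that the sequence is computably Cauchy.

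For part~(\ref{cor:conv-comp-Cauchy-2}), I would take a computable Cauchy modulus $g$ of $(x_n)_n$, which exists by assumption, and note that by hypothesis $(x_n)_n$ converges. Lemma~\ref{lemma:conv-Cauchy}(3) then says that $g$ is a modulus of convergence of $(x_n)_n$ as well. Thus $(x_n)_n$ has a computable modulus of convergence, so it converges computably.

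For part~(\ref{cor:conv-comp-Cauchy-3}), I would combine the two previous parts with completeness. The forward direction is exactly part~(\ref{cor:conv-comp-Cauchy-1}). For the converse, if $(x_n)_n$ is computably Cauchy then in particular it is a Cauchy sequence, hence by completeness of $(X,d)$ it converges, and then part~(\ref{cor:conv-comp-Cauchy-2}) yields that it converges computably. The only conceptual point worth flagging is that being computably Cauchy does imply being a Cauchy sequence in the ordinary sense (any computable function is total, so the computable Cauchy modulus is a Cauchy modulus); this is essentially immediate, and it is the one spot where one must be careful if one wished to weaken the hypotheses. No real obstacle is expected in any of the three parts.
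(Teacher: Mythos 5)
Your argument is correct and is exactly the paper's intended derivation: the paper simply states that the assertions follow from Lemma~\ref{lemma:conv-Cauchy}, and your write-up fills in precisely those details (computability of the shifted modulus for part~1, part~3 of the lemma for part~2, and completeness plus the two previous parts for part~3).
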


\begin{proof}
The assertions follow from Lemma~\ref{lemma:conv-Cauchy}.
\end{proof}

We end this section with a well-known observation on Cauchy sequences in a normed vector space.
A normed vector space $X$ with norm $|\cdot|$ becomes a metric space
with the metric $d:X^2\to\IR$ defined by $d(x,y) := |x-y|$, for $x,y\in X$.
A sequence $(x_n)_n$ in a normed vector space $X$ with norm $|\cdot|$
is called {\em bounded} if there exists a number
$B\in\IN$ such that $|x_n|\leq B$, for all $n\in\IN$. If a number $B$ with this property does not exist then
$(x_n)_n$ is called {\em unbounded}.

\begin{lemma}
\label{lemma:unbounded-notCauchy}
Any Cauchy sequence in a normed vector space is bounded.
\end{lemma}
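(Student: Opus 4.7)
The plan is the standard textbook argument: split the sequence into an initial segment and a tail, bound the tail using the Cauchy property with a single fixed threshold, and bound the initial segment by a finite maximum.

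First I would fix any Cauchy sequence $(x_n)_n$ in a normed vector space $X$ with norm $|\cdot|$ and pick some Cauchy modulus $g:\IN\to\IN$. The key step is to instantiate the Cauchy condition at $n=0$: for every $m \geq g(0)$ we have $|x_m - x_{g(0)}| \leq 2^{-0} = 1$, so by the triangle inequality (applied to the norm via $|x_m| \leq |x_m - x_{g(0)}| + |x_{g(0)}|$) we obtain $|x_m| \leq |x_{g(0)}| + 1$ for all $m \geq g(0)$. This handles the tail.

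For the finitely many remaining indices $m < g(0)$, I would simply take $M := \max\{|x_0|, |x_1|, \ldots, |x_{g(0)-1}|\}$, which exists as the maximum of a finite set of real numbers. Combining both cases, every term satisfies $|x_m| \leq \max\{M, |x_{g(0)}| + 1\}$. Since the definition of boundedness in the excerpt requires a witness $B \in \IN$, I would finish by choosing any natural number $B$ with $B \geq \max\{M, |x_{g(0)}| + 1\}$, which exists by the archimedean property of $\IR$.

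There is essentially no obstacle here; the only mild point of care is remembering that the initial segment $x_0,\ldots,x_{g(0)-1}$ is finite (so its maximum norm exists) and that the bound must be rounded up to a natural number to match the stated definition of \emph{bounded}. The proof uses only the triangle inequality, which is available in any normed vector space, so no additional structure is needed.
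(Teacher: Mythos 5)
Your proof is correct and is exactly the standard argument the paper has in mind; the paper itself omits the proof as ``very easy and well known,'' so there is nothing to compare against. (Only a cosmetic remark: when $g(0)=0$ the initial segment is empty, so one can simply take the bound $|x_{g(0)}|+1$ alone in that case.)
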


We omit the very easy and well known proof.
For example, for any $n\geq 1$ the set $\IR^n$ with the usual Euclidean norm
$|(x_1,\ldots,x_n)|:=\sqrt{ \sum_{i=1}^n x_i^2}$ is a normed vector space.
In the case $n=1$ the norm $|\cdot|$ is simply the usual absolute value function.

\section{Computable Sequences}
\label{section:comp-sequences}

First we have a look at computable sequences of rational numbers and computable sequences
of real numbers. Then we shortly discuss computable sequences in computable metric spaces.

\begin{definition}
\label{definition:computable-rat-seq}
A sequence $(x_n)_n$ of rational numbers is called {\em computable} 
if there exists a total computable function $f:\IN\to\IN$ such that, for all $n\in\IN$, $q_n=\nu_\IQ(f(n))$.
\end{definition}


\begin{definition}
\begin{enumerate}
\item
A real number $x$ is called {\em computable} if there exists a computable sequence
$(q_n)_n$ of rational numbers satisfying, for all $n\in\IN$, $|x - q_n| \leq 2^{-n}$.
Let $\EC$ be the set of all computable numbers.
\item
A real number $x$ is called {\em left-computable} if there exists
an increasing computable sequence $(q_n)_n$ of rational numbers converging to $x$.
Let $\LC$ be the set of all left-computable numbers.
\end{enumerate}
\end{definition}

\begin{lemma}
For a real number $x$ the following conditions are equivalent.
\begin{enumerate}
\item
$x$ is computable.
\item
there exists a computable sequence $(q_n)_n$ of rational numbers converging to $x$ computably.
\item
$x$ and $-x$ are left-computable.
\end{enumerate}
\end{lemma}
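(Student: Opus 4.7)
The plan is to establish the cycle $(1) \Rightarrow (2) \Rightarrow (1)$ and then $(1) \Leftrightarrow (3)$, all by standard and direct arguments. The equivalence of $(1)$ and $(2)$ is essentially a repackaging of the definitions. If $x$ is computable, witnessed by a computable sequence $(q_n)_n$ with $|x - q_n| \leq 2^{-n}$, then the identity on $\IN$ is a computable modulus of convergence, so $(q_n)_n$ converges to $x$ computably, giving $(2)$. Conversely, if $(q_n)_n$ is a computable sequence of rationals converging to $x$ with computable modulus of convergence $g:\IN\to\IN$, then the sequence $p_n:=q_{g(n)}$ is computable (as the composition of two total computable functions, after passing through $\nu_\IQ$) and satisfies $|x - p_n|\leq 2^{-n}$, so $x$ is computable.

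For $(1) \Rightarrow (3)$, starting from a computable $(q_n)_n$ with $|x - q_n|\leq 2^{-n}$, I would set $\tilde q_n := q_n - 2^{-n}$, which is a computable sequence of rationals satisfying $\tilde q_n \leq x$ and $|x - \tilde q_n|\leq 2\cdot 2^{-n}$. To make it increasing I pass to the running maximum $p_n := \max\{\tilde q_0,\ldots,\tilde q_n\}$, which is still computable, still bounded above by $x$, and still converges to $x$ (as it lies between $\tilde q_n$ and $x$). Thus $x$ is left-computable. Applying the same argument to the computable sequence $(-q_n)_n$ shows that $-x$ is left-computable.

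For the nontrivial direction $(3) \Rightarrow (1)$, suppose $(p_n)_n$ and $(p_n')_n$ are increasing computable sequences of rationals converging to $x$ and to $-x$, respectively. Then $(-p_n')_n$ is a decreasing computable sequence of rationals converging to $x$, and for every $n$ we have $p_n \leq x \leq -p_n'$. Given $k \in \IN$, I would search computably for the least $m$ with $-p_m' - p_m \leq 2^{-k}$; such an $m$ exists because $-p_m' - p_m \to 0$, and the search halts uniformly in $k$. Defining $r_k := p_{m(k)}$ yields a computable sequence of rationals with $|x - r_k|\leq 2^{-k}$, so $x$ is computable.

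The only point requiring mild care is to confirm, in the implication $(2)\Rightarrow(1)$, that the composition producing $(p_n)_n$ is indeed a computable sequence of rationals in the sense of Definition~\ref{definition:computable-rat-seq}, and, in $(3)\Rightarrow(1)$, that the search defining $m(k)$ is uniformly computable in $k$; both are routine given the computability of $\nu_\IQ$ and of the rational comparison relation. No single step looks like a genuine obstacle.
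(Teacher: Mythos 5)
Your proof is correct; the paper itself omits the argument as well known, and what you give is exactly the standard argument one would supply (identity modulus for $(1)\Rightarrow(2)$, composing with the modulus for $(2)\Rightarrow(1)$, shifting by $2^{-n}$ and taking running maxima for $(1)\Rightarrow(3)$, and trapping $x$ between the two approximations with a computable search for $(3)\Rightarrow(1)$). The only cosmetic point is that your running maximum is merely nondecreasing; if one reads the paper's ``increasing'' as strict, replace $p_n$ by $p_n-2^{-n}$, which is strictly increasing, still computable, and still converges to $x$ from below.
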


This is well-known and easy to see. We omit the proof.

\begin{definition}
\label{definition:computable-real-seq}
A sequence $(x_n)_n$ of real numbers is called {\em computable}
if there exists a computable sequence $(q_n)_n$ of rational numbers
such that, for all $k,n\in\IN$, $|x_n - q_{\langle n,k\rangle}| \leq 2^{-k}$.
\end{definition}

A sequence of rational numbers can be computable as a sequence of rational numbers, so,
computable according to Definition~\ref{definition:computable-rat-seq},
or it can be computable as a sequence of real numbers, so,
computable according to Definition~\ref{definition:computable-real-seq}.
The following proposition clarifies the relation between these two types of sequences of rational numbers.

\begin{proposition}
\label{prop:comp-seq-rat}
\begin{enumerate}
\item
If a sequence $(x_n)_n$ of rational numbers is a computable sequence of rational numbers
as in Definition~\ref{definition:computable-rat-seq}
then it is also a computable sequence of real numbers as in Definition~\ref{definition:computable-real-seq}.
\item
There exists a sequence $(x_n)_n$ of rational numbers that is a computable sequence of real numbers
according to Definition~\ref{definition:computable-real-seq} but not a computable sequence of rational numbers
according to Definition~\ref{definition:computable-rat-seq}.
\end{enumerate}
\end{proposition}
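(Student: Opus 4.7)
The plan is to observe that the two definitions differ in what data the algorithm must output: Definition~\ref{definition:computable-rat-seq} demands an exact $\nu_\IQ$-index of $x_n$, while Definition~\ref{definition:computable-real-seq} only demands rational approximations at every precision $2^{-k}$. This makes part~1 essentially tautological and suggests where to look for a counterexample in part~2: take rationals that are either $0$ or very small, so that one can always output $0$ as a crude approximation, but whose exact $\nu_\IQ$-index encodes a non-computable piece of information.

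For part~1, given a computable $f:\IN\to\IN$ with $x_n=\nu_\IQ(f(n))$, I would define the auxiliary rational sequence $(q_m)_{m\in\IN}$ by $q_m:=\nu_\IQ(f(\pi_1(m)))$. This is a computable sequence of rationals (it is the composition of computable functions with $\nu_\IQ$), and by construction $q_{\langle n,k\rangle}=x_n$ for all $n,k$, so $|x_n-q_{\langle n,k\rangle}|=0\leq 2^{-k}$. Hence $(x_n)_n$ satisfies Definition~\ref{definition:computable-real-seq}.

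For part~2, I would fix a c.e.\ set $A\subseteq\IN$ that is not computable (e.g., $A=\{e:\varphi_e(e)\!\downarrow\}$), with c.e.\ approximation $A[s]$, and for $n\in A$ let $s(n):=\min\{s:n\in A[s]\}$. Define
\[ x_n := \begin{cases} 2^{-s(n)} & \text{if } n\in A,\\ 0 & \text{if } n\notin A.\end{cases} \]
To witness computability as a sequence of reals, I would define $q_{\langle n,k\rangle}:=2^{-s(n)}$ if $n\in A[k]$ (so $s(n)\leq k$ is known after $k$ stages of enumeration) and $q_{\langle n,k\rangle}:=0$ otherwise. A short case analysis shows $|x_n-q_{\langle n,k\rangle}|\leq 2^{-k}$: the equality holds when $n\in A[k]$ or when $n\notin A$, and when $n\in A\setminus A[k]$ we have $x_n=2^{-s(n)}\leq 2^{-(k+1)}$.

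It remains to show that $(x_n)_n$ is not a computable sequence of rationals. If it were, say $x_n=\nu_\IQ(f(n))$ with $f$ computable, then from the definition of $\nu_\IQ$ one reads off that $\nu_\IQ(\langle a,b\rangle)=0$ iff $a=0$, so testing whether $\pi_1(f(n))=0$ would decide whether $x_n=0$, i.e., whether $n\notin A$. This would make $A$ computable, contradicting the choice of $A$. The only step requiring any care is verifying the error bound in the approximation, and that is precisely the point where the smallness of $2^{-s(n)}$ relative to $2^{-k}$ when $s(n)>k$ saves us from having to detect whether $n$ will ever enter $A$.
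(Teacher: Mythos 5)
Your proof is correct and follows essentially the same route as the paper: part~1 via the constant-in-$k$ approximation $q_{\langle n,k\rangle}=x_n$, and part~2 via a c.e.\ non-computable set with $x_n$ equal to $2^{-(\text{least witness})}$ or $0$, approximated by truncating the enumeration at stage $k$, with non-computability as a rational sequence following because an exact $\nu_\IQ$-index reveals whether $x_n=0$. Your use of the enumeration stage $s(n)$ in place of the paper's projection-theorem witness is only a cosmetic difference.
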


\begin{proof}
For the first assertion, let $(x_n)_n$ be a computable sequence of rational numbers.
Then the sequence $(q_n)_n$ defined by
$q_{\langle n,k\rangle} := x_n$,
for $k,n\in\IN$, is a computable sequence of rational numbers as well,
and it shows as in Definition~\ref{definition:computable-real-seq}
that $(x_n)_n$ is a computable sequence of real numbers.

We come to the proof of the second assertion.
Let $H\subseteq \IN$ be a computably enumerable set that is not decidable.
By the projection theorem there exists a decidable set $A\subseteq\IN^2$ such that 
$H=\{n \in\IN ~:~ (\exists i\in\IN) \ (n,i) \in A\}$.
We define a sequence $(x_n)_n$ of rational numbers by
\[ x_n := \begin{cases}
    0 & \text{if } n \not\in H, \\
    2^{-\min\{i\in\IN ~:~ (n,i) \in A\}} & \text{otherwise},
\end{cases} \]
for $n\in\IN$. 
If the sequence $(x_n)_n$ were a computable sequence of rational numbers
as in Definition~\ref{definition:computable-rat-seq},
then, given $n$, one could compute a natural number $\langle a,b \rangle$ with
$\nu_\IQ(\langle a,b \rangle) = x_n$ and then, due to the equivalence
\[ n \in H \iff x_n \neq 0 \iff a\neq 0 , \]
one could decide whether $n\in H$ or not. But $H$ is not decidable.
We define a sequence $q:\IN\to\IQ$ by
\[ q_{\langle n,k \rangle} := \begin{cases}
     0 & \text{if } (n,i)\not\in A, \text{for all } i \leq k , \\
2^{-\min\{i\in\IN ~:~ (n,i) \in A\}} & \text{otherwise},
\end{cases} \]
for $n,k\in\IN$. It is clear that this sequence is a computable sequence of rational numbers
and it shows as in Definition~\ref{definition:computable-real-seq}
that $(x_n)_n$ is a computable sequence of real numbers.
\end{proof}

Later we shall need the following two simple observations about 
nondecreasing computable sequences of real numbers.
The first one is certainly well known.

\begin{lemma}
\label{lemma:nondecreasing-cc}
Let $(a_n)_n$ be a nondecreasing computable sequence of real numbers
converging to a computable real number.
Then the sequence $(a_n)_n$ converges computably.
\end{lemma}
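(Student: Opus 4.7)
The plan is to build a computable modulus of convergence directly. Since $\alpha$ is computable, fix a computable sequence $(s_k)_k$ of rationals with $|s_k-\alpha|\leq 2^{-k}$ for all $k$. Since $(a_n)_n$ is a computable sequence of reals in the sense of Definition~\ref{definition:computable-real-seq}, fix a computable double sequence of rationals $(r_{n,m})_{n,m}$ with $|r_{n,m}-a_n|\leq 2^{-m}$ for all $n,m$. Because $(a_n)_n$ is nondecreasing with limit $\alpha$, we have the crucial one-sided bound $a_n\leq \alpha$ for every $n$, so controlling $|a_n-\alpha|$ reduces to forcing $a_n\geq \alpha-2^{-k}$ from below.

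The main step is to semi-decide, uniformly in $k$, the strict inequality $a_n>\alpha-2^{-(k+1)}$ by sweeping over $(n,m)$ in some effective order and checking the rational test
\[
r_{n,m}-s_m \;>\; 2\cdot 2^{-m}-2^{-(k+1)},
\]
which implies $a_n - \alpha > -2^{-(k+1)}$ since $a_n\geq r_{n,m}-2^{-m}$ and $\alpha\leq s_m+2^{-m}$. Define $g(k)$ to be the first $n$ for which some $m$ produces a positive test, and set this as the candidate modulus.

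Next I would verify termination and correctness. Termination follows because $a_n\to\alpha$ gives some $n^\ast$ with $\alpha-a_{n^\ast}<2^{-(k+2)}$; plugging into the inequality shows that for all sufficiently large $m$ (roughly $m\geq k+5$) the test succeeds at $n=n^\ast$, so the search halts. Correctness follows by combining the two directions: on the one hand, once the test fires at $(g(k),m)$ we obtain $a_{g(k)}>\alpha-2^{-(k+1)}$; on the other hand, monotonicity yields $a_n\geq a_{g(k)}>\alpha-2^{-(k+1)}>\alpha-2^{-k}$ for every $n\geq g(k)$, while the upper bound $a_n\leq\alpha$ is automatic. Hence $|a_n-\alpha|\leq 2^{-k}$ for all $n\geq g(k)$, so $g$ is a computable modulus of convergence.

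The only delicate point is the bookkeeping with strict versus non-strict inequalities: since equality of reals is not semi-decidable, I cannot directly search for an $n$ with $a_n\geq \alpha-2^{-k}$, which is why the search is phrased to certify the strict inequality $a_n>\alpha-2^{-(k+1)}$ with slack $2^{-(k+1)}-2^{-(k+2)}=2^{-(k+2)}$, leaving enough room for the rational approximations $r_{n,m}$ and $s_m$ to witness it. Everything else is a routine unpacking of the definitions of computable real and computable sequence of reals.
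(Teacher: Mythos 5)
Your proof is correct and takes essentially the same route as the paper: both arguments unpack the definitions of a computable real and a computable sequence of reals, run an effective search over rational approximations that terminates because $(a_n)_n$ converges to the computable limit $\alpha$, and then use monotonicity together with $a_n\leq\alpha$ to turn the index found for precision $k$ into a modulus of convergence. The only cosmetic difference is that you certify a one-sided lower bound $a_n>\alpha-2^{-(k+1)}$ directly, whereas the paper searches for indices where a rational approximation of $a_n$ and one of $\alpha$ agree within tolerance.
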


\begin{proof}
Let $\alpha:=\lim_{n\to\infty} a_n$.
Let $(c_n)_n$ be a computable sequence of rational numbers such that,
for all $n,k\in\IN$, $|a_n - c_{\langle n,k \rangle}| \leq 2^{-k}$.
Then the sequence $(d_n)_n$ of rational numbers defined by $d_n:=c_{\langle n,n+2\rangle}$,
for $n\in\IN$, is computable as well and converges to $\alpha$.
Let $(b_n)_n$ be a computable sequence of rational numbers with $|b_n - \alpha| \leq 2^{-(n+2)}$,
for all $n\in\IN$.
As both $(d_n)_n$ and $(b_n)_n$ converge to the same number and are computable,
there exists  a total, strictly increasing, computable function
$g:\IN\to\IN$ such $|d_{g(n)} - b_{g(n)}| \leq 2^{-(n+2)}$, for all $n\in\IN$.
We claim that $g$ is a modulus of convergence of the sequence $(a_n)_n$.
Indeed, for all $m,n\in\IN$ with $m\geq g(n)$ we obtain
\begin{eqnarray*}
 | a_m - \alpha| 
   &\leq& |a_{g(n)} - \alpha| \\
   &\leq& |a_{g(n)} - d_{g(n)}| + |d_{g(n)} -  b_{g(n)}| + |b_{g(n)} - \alpha| \\
   &\leq& 2^{-(g(n)+2)} + 2^{-(n+2)} + 2^{-(g(n)+2)} \\
   &\leq& 2^{-(n+2)} + 2^{-(n+2)} + 2^{-(n+2)} \\
   &<& 2^{-n} . 
\end{eqnarray*}
\end{proof}

\begin{definition}
A function $s:\IN\to\IN$ {\em tends to $\infty$} if for every $M\in\IN$ there exists some $N\in\IN$ such that
for all $n\in\IN$,
\[ \text{ if } n \geq N \text{ then } s(n) \geq M . \]
\end{definition}

For example any unbounded nondecreasing function $s:\IN\to\IN$ tends to $\infty$.

\begin{lemma}
\label{lemma:compconv-h1}
Let $(x_n)_n$ be a nondecreasing sequence of real numbers.
Let $s:\IN\to\IN$ be a computable function that tends to $\infty$.
If the sequence $(x_{s(n+1)} - x_{s(n)})_n$ converges computably to $0$
then the sequence $(x_{n+1} - x_n)_n$ converges computably to $0$ as well.
\end{lemma}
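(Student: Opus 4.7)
My plan is to exhibit a computable modulus of convergence $h : \IN \to \IN$ for $(x_{n+1} - x_n)_n \to 0$ directly from a computable modulus $g$ of the hypothesis, using the monotonicity of $(x_n)_n$ to transfer smallness from the subsampled differences to every consecutive difference. The engine of the argument is the telescoping observation that, since $(x_n)_n$ is nondecreasing, whenever $a \leq n < b$ are natural numbers,
\[
   0 \;\leq\; x_{n+1} - x_n \;\leq\; x_b - x_a,
\]
because $x_b - x_a$ is a sum of nonnegative terms $x_{i+1} - x_i$ one of which is precisely $x_{n+1} - x_n$. Hence it suffices, for every $n$ beyond a computable threshold, to bracket $n$ between two values $s(m^* - 1)$ and $s(m^*)$ at which the hypothesis already controls the difference.

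Given $k$, I would set $h(k) := s(g(k))$, where $g$ is a computable modulus of convergence of $(x_{s(n+1)} - x_{s(n)})_n$ to $0$. For any $n \geq h(k)$, let $m^* := \min\{m \geq g(k) : s(m) > n\}$; this minimum exists because $s$ tends to $\infty$. Since $s(g(k)) = h(k) \leq n$, the index $g(k)$ is not in the set, so $m^* > g(k)$, and by minimality $s(m^* - 1) \leq n$. Writing $a := s(m^* - 1)$ and $b := s(m^*)$ gives $a \leq n < b$, and because both $m^* - 1$ and $m^*$ are at least $g(k)$, the modulus hypothesis yields
\[
   x_b - x_a \;=\; \bigl|x_{s(m^*)} - x_{s(m^* - 1)}\bigr| \;\leq\; 2^{-k},
\]
where $a \leq b$ together with monotonicity justifies dropping the absolute value. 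The telescoping inequality then forces $x_{n+1} - x_n \leq 2^{-k}$, so the computable function $h$ is the desired modulus.

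The only real pitfall is that $s$ is not assumed monotone, so consecutive values $s(m), s(m+1)$ can jump in either direction and the union of the intervals spanned by consecutive $s$-values may have short-range gaps that only fill in after enough steps; a naive attempt to cover every large $n$ by those intervals is delicate. The minimality in the definition of $m^*$ neutralises this issue by directly producing a genuine bracket $s(m^*-1) \leq n < s(m^*)$ formed by two consecutive indices $m^*-1, m^*$ inside the tail $\{m : m \geq g(k)\}$ where the hypothesis is effective; all that is required of $s$ beyond computability is that it tends to $\infty$, which guarantees the existence of $m^*$.
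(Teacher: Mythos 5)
Your proof is correct and is essentially the paper's own argument: both use the computable modulus $h(k)=s(g(k))$ and bracket the index $n$ between two consecutive values $s(m)\leq n<s(m+1)$ with $m\geq g(k)$, then invoke monotonicity of $(x_n)_n$. The only cosmetic difference is that you locate the bracket via $m^*:=\min\{m\geq g(k) : s(m)>n\}$ while the paper takes $\max\{i : s(i)\leq n\}$, which yields the same estimate.
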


\begin{proof}
Let us assume that the sequence $(x_{s(n+1)} - x_{s(n)})_n$ converges computably to $0$.
Let $g:\IN\to\IN$ be a computable modulus of convergence of the sequence $(x_{s(n+1)} - x_{s(n)})_n$.
As this sequence converges to $0$ this means
that, for all $m,n\in\IN$,
\[ m \geq g(n) \Rightarrow |x_{s(m+1)} - x_{s(m)}| \leq  2^{-n} . \]
The function $s\circ g$ is total and computable.
We claim that (A) the sequence $(x_{n+1} - x_n)_n$ converges to $0$
and that (B) the function $s \circ g$ is a computable modulus of convergence of this sequence.
Indeed, let us consider two numbers $k,n\in\IN$ satisfying $k \geq s(g(n))$.
Then the number
\[ m:= \max\{i \in\IN ~:~ s(i) \leq k\} \]
exists because $s$ tends to $\infty$.
The number $m$ satisfies $m\geq g(n)$ and $s(m) \leq k < s(m+1)$. We obtain
\[ x_{s(m)} \leq x_k \leq x_{k+1} \leq x_{s(m+1)} , \]
hence,
\[ |x_{k+1} - x_k| \leq |x_{s(m+1)} - x_{s(m)}| \leq 2^{-n}. \]
This shows the claims (A) and (B).
\end{proof}

Let us conclude this section with a short discussion of computable sequences
and elements in a computable metric space.

\begin{definition}
Let $(X,d)$ be a metric space,
and let $\alpha,\beta:\IN\to X$ be sequences in $X$.
\begin{enumerate}
\item
We write $\alpha\leq\beta$ and say that
{\em $\alpha$ is $\beta$-computable} if there exists a total computable function
$g:\IN^2\to\IN$ such that, for all $n,k\in\IN$,
\[ d( \alpha(n), \beta(g(n,k)) ) \leq 2^{-k} . \]
\item
We write $\alpha \equiv \beta$ and say that {\em $\alpha$ and $\beta$ are equivalent}
if $\alpha\leq \beta$ and $\beta\leq\alpha$. 
\item
An element $x\in X$ is called {\em $\alpha$-computable} if there exists a total computable function
$f:\IN\to\IN$ such that, for all $k\in\IN$,
$d( x, \alpha(f(k)) ) \leq 2^{-k}$.
\end{enumerate}
\end{definition}

Note that the relation $\leq$ is reflexive and transitive. Furthermore, $\alpha\leq\beta$
implies that the set $\{\alpha(n)~:~n\in\IN\}$ is a subset of the closure of the set
$\{\beta(n)~:~n\in\IN\}$.
The relation $\equiv$ is an equivalence relation.
Obviously, an element $x\in X$ is $\alpha$-computable iff the constant sequence $\beta:\IN\to X$
defined by $\beta(n):=x$, for all $n\in\IN$, is $\alpha$-computable.

\begin{definition}
A {\em computable metric space} is a triple $(X,d,\alpha)$ such that
the pair $(X,d)$ is a metric space and $\alpha:\IN\to X$ is a sequence such that
the set $\{\alpha(n)~:~n\in\IN\}$ is a dense subset of $X$
and the function 
$n\mapsto d(\alpha(\pi_1(n)),\alpha(\pi_2(n)))$ is a computable sequence of real numbers.
\end{definition}

\begin{definition}
Let $(X,d,\alpha)$ be a computable metric space.
An element $x\in X$ (a sequence $\gamma:\IN\to X$) is called {\em computable}
if it is $\alpha$-computable.
\end{definition}

The transitivity of $\leq$ on sequences in $X$ implies that in this definition $\alpha$ can be
replaced by any equivalent sequence $\beta$ in $X$ without changing the
set of computable elements of $X$ or the set of computable sequences in $X$.

\begin{lemma}
\label{lemma:comp-metric-space-comp-elem}
Let $(X,d,\alpha)$ be a computable metric space.
An element $x\in X$ is computable iff there exists a computable function $f:\IN\to\IN$ such that the
sequence $(\alpha(f(n)))_n$ converges computably to $x$.
\end{lemma}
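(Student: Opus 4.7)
The plan is to prove both directions by a direct translation between a modulus of convergence and the witness function in the definition of $\alpha$-computability.

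For the forward direction, assume $x$ is computable, i.e., $\alpha$-computable. By definition there is a total computable function $f:\IN\to\IN$ with $d(x,\alpha(f(k)))\leq 2^{-k}$ for all $k\in\IN$. I would then observe that the sequence $(\alpha(f(n)))_n$ converges to $x$ and that the identity function $n\mapsto n$ serves as a computable modulus of convergence (in fact, any monotone rearrangement works, since the bound $2^{-k}$ is attained at index $k$ itself and all larger indices do at least as well in the sense required — if needed, the bound at index $m\geq n$ is $2^{-m}\leq 2^{-n}$). Hence the required computable $f$ and modulus exist, establishing one implication.

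For the backward direction, assume $f:\IN\to\IN$ is computable and $(\alpha(f(n)))_n$ converges computably to $x$, witnessed by a computable modulus of convergence $g:\IN\to\IN$, so that $d(x,\alpha(f(m)))\leq 2^{-k}$ whenever $m\geq g(k)$. I would then define $h:=f\circ g$, which is total and computable as a composition of total computable functions. For every $k\in\IN$ we have $g(k)\geq g(k)$ and therefore $d(x,\alpha(h(k)))=d(x,\alpha(f(g(k))))\leq 2^{-k}$. By the definition of $\alpha$-computability this shows that $x$ is $\alpha$-computable, hence computable.

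There is no real obstacle here: both directions reduce to unwinding the definitions, with the only mild point being to recognize that a modulus of convergence for $(\alpha(f(n)))_n$ combined with $f$ yields directly a witness of the form required by $\alpha$-computability via composition, and conversely that the witness of $\alpha$-computability already supplies the identity as a trivial modulus. No properties of the dense sequence $\alpha$ beyond its role as the reference sequence are needed, and the metric enters only through the triangle-free inequality $d(x,\alpha(f(g(k))))\leq 2^{-k}$.
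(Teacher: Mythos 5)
Your proof is correct: the identity function serves as a modulus of convergence in the forward direction, and the composition $f\circ g$ witnesses $\alpha$-computability in the backward direction, exactly as the definitions require. The paper omits the proof of this lemma as simple, and your routine unwinding of the definitions is precisely the intended argument.
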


We omit the simple proof.

\section{Nearly Computably Cauchy Sequences in a Metric Space}
\label{section:nearly-computably-Cauchy}

The following definition is central for this paper.

\begin{definition}
\label{definition:ncc}
Let $(X,d)$ be a metric space.
We say that a sequence $(x_n)_n$ is \emph{nearly computably Cauchy}
if for every total, computable, and strictly increasing function $r:\IN\to\IN$
the sequence of real numbers $(d(x_{r(n+1)},x_{r(n)}))_n$ converges computably to $0$.
\end{definition}

The motivation behind this definition is that for any convergent sequence,
in fact, even for any Cauchy sequence, $(x_n)_n$ the sequence 
$(d(x_{n+1},x_n))_n$ of distances of consecutive elements converges to $0$.
Of course, this applies to any subsequence as well.
In the definition above we consider an effective version of the condition that the
sequence of distances of consecutive elements of any effective subsequence 
converges to $0$.

\begin{lemma}
\label{lemma:cChauchy-ncc-1}
Let $(X,d)$ be a metric space.
If a sequence $(x_n)_n$ in $X$ is computably Cauchy then 
it is nearly computably Cauchy.
\end{lemma}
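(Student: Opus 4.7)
The plan is to show that the Cauchy modulus $g$ of $(x_n)_n$ itself serves (uniformly in the choice of $r$) as a modulus of convergence of the sequence $(d(x_{r(n+1)},x_{r(n)}))_n$ to $0$. The only thing being used is the fact that a strictly increasing function $\IN\to\IN$ satisfies $r(n)\geq n$.

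Concretely, assume $(x_n)_n$ is computably Cauchy and fix a computable Cauchy modulus $g:\IN\to\IN$, so that $d(x_l,x_m)\leq 2^{-n}$ whenever $l,m\geq g(n)$. Let $r:\IN\to\IN$ be any total, computable, strictly increasing function. I first observe that $r(m)\geq m$ for every $m\in\IN$ (an easy induction on $m$ using strict monotonicity). Hence, for every $n\in\IN$ and every $m\geq g(n)$, we have $r(m)\geq m\geq g(n)$ and likewise $r(m{+}1)\geq g(n)$, so by the Cauchy-modulus property
\[
d(x_{r(m+1)},x_{r(m)})\leq 2^{-n}.
\]

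This immediately yields two things. First, taking $n\to\infty$ shows that $(d(x_{r(n+1)},x_{r(n)}))_n$ converges to $0$. Second, the displayed implication says exactly that $g$ is a modulus of convergence of the sequence $(d(x_{r(n+1)},x_{r(n)}))_n$ to $0$. Since $g$ is computable and does not depend on $r$, the sequence $(d(x_{r(n+1)},x_{r(n)}))_n$ converges computably to $0$. As $r$ was arbitrary, $(x_n)_n$ is nearly computably Cauchy by Definition~\ref{definition:ncc}.

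There is no real obstacle here; the only subtlety worth flagging in the write-up is the elementary fact $r(n)\geq n$ for strictly increasing $r:\IN\to\IN$, which is what lets the same modulus $g$ work for every such $r$.
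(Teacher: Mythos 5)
Your proof is correct and follows essentially the same route as the paper's: the computable Cauchy modulus $g$ itself serves as a modulus of convergence of $(d(x_{r(n+1)},x_{r(n)}))_n$, using $r(m)\geq m$ for strictly increasing $r$. Nothing to add.
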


\begin{proof}
Let $(x_n)_n$ be a sequence in $X$ that is computably Cauchy.
Let $r:\IN\to\IN$ be a total, strictly increasing, computable function.
We show that the sequence $(d(x_{r(n+1)},x_{r(n)}))_n$ converges computably to $0$.
Let $g:\IN\to\IN$ be a computable Cauchy modulus of $(x_n)_n$, that is,
$g$ is a total, computable function satisfying, for all $l,m,n \in\IN$,
\[ l,m\geq g(n) \Rightarrow d(x_l,x_m) \leq 2^{-n} . \]
We claim that $g$ shows that the sequence $(d(x_{r(n+1)},x_{r(n)}))_n$ converges computably to $0$.
Indeed, if $m \geq g(n)$ then $r(m+1)\geq m+1 \geq g(n)$ and $r(m) \geq m \geq g(n)$, hence,
$d(x_{r(m+1)},x_{r(m)}) \leq 2^{-n}$.
\end{proof}

In the following lemma we show that one may modify the definition in several
ways without changing the set of defined objects.
In the lemma we consider for a total function $r:\IN\to\IN$ the
following three conditions:
\begin{enumerate}
\item[(I)]
$r$ is strictly increasing,
\item[(II)]
$r$ is nondecreasing and unbounded,
\item[(III)]
$r$ is nondecreasing.
\end{enumerate}
Note that (I) implies (II) and (II) implies (III). 

\begin{lemma}
\label{lemma:ncc-basic}
Let $(X,d)$ be a metric space,
and let $(x_n)_n$ be a sequence in $X$.
Then the three conditions
\begin{enumerate}
\item[(Y.C)]
for every computable function $r:\IN\to\IN$ satisfying condition $\mathrm{(Y)}$
the sequence $(d(x_{r(n+1)},x_{r(n)}))_n$ converges computably to $0$
\end{enumerate}
for $\mathrm{Y}\in\{\mathrm{I},\mathrm{II},\mathrm{III}\}$
are equivalent.
\end{lemma}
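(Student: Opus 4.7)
The plan is to observe that the implications $(\mathrm{III.C})\Rightarrow(\mathrm{II.C})\Rightarrow(\mathrm{I.C})$ are immediate from the inclusions $(\mathrm{I})\Rightarrow(\mathrm{II})\Rightarrow(\mathrm{III})$: the class of computable functions $r$ being quantified over shrinks at each step, so the condition strengthens. All the real work is in the converse, and I would prove the single implication $(\mathrm{I.C})\Rightarrow(\mathrm{III.C})$ by splitting on whether the given nondecreasing computable $r:\IN\to\IN$ is bounded or unbounded.

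The bounded case is actually free of any hypothesis. Since $r$ is nondecreasing and integer-valued, if it is bounded then there is some $N_0\in\IN$ from which $r$ is constant, and hence $d(x_{r(m+1)},x_{r(m)})=0$ for all $m\geq N_0$. The constant function $g(n):=N_0$ is then a computable modulus of convergence to $0$. The subtle point here, which I would flag explicitly, is that for each individual bounded $r$ we only need to exhibit \emph{some} computable $g$, not one computable uniformly in an index for $r$; a specific natural number defines a computable constant function regardless of whether we can find it effectively from $r$.

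In the unbounded case I apply $(\mathrm{I.C})$ to a thinning of $r$. I set $j_0:=0$ and $j_{k+1}:=\min\{m\in\IN : r(m)>r(j_k)\}$; the minimum exists by unboundedness of $r$, and both $k\mapsto j_k$ and $s(k):=r(j_k)$ are computable and strictly increasing. The key structural observation is that $r$ is constant equal to $s(k)$ throughout each block $j_k\leq m<j_{k+1}$, so any nonzero consecutive distance $d(x_{r(m+1)},x_{r(m)})$ occurs only at $m=j_{k+1}-1$ and then equals $d(x_{s(k+1)},x_{s(k)})$. Taking a computable modulus $g$ for the computable convergence of $(d(x_{s(k+1)},x_{s(k)}))_k$ to $0$ from $(\mathrm{I.C})$ applied to $s$, I would verify that $h(n):=j_{g(n)}$ is a computable modulus for $(d(x_{r(n+1)},x_{r(n)}))_n$: if $m\geq h(n)$ and $r(m+1)>r(m)$, then $r(m)=s(k)$ with $s(k)=r(m)\geq r(j_{g(n)})=s(g(n))$, forcing $k\geq g(n)$ by strict monotonicity of $s$, so $d(x_{r(m+1)},x_{r(m)})=d(x_{s(k+1)},x_{s(k)})\leq 2^{-n}$. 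The main obstacle is just this bookkeeping step — in particular tracing how a bound on $m$ translates through the intermediate index $k$ — rather than any conceptual difficulty.
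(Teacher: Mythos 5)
Your proof is correct and follows essentially the same route as the paper: your block construction $j_0=0$, $j_{k+1}=\min\{m : r(m)>r(j_k)\}$, $s=r\circ j$ with modulus $h(n)=j_{g(n)}$ is exactly the paper's proof of $\mathrm{(I.C)}\Rightarrow\mathrm{(II.C)}$, and your bounded case is exactly its proof of $\mathrm{(II.C)}\Rightarrow\mathrm{(III.C)}$. The only difference is that you merge the two implications into a single $\mathrm{(I.C)}\Rightarrow\mathrm{(III.C)}$ with a case split on boundedness, which is a cosmetic rearrangement.
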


\begin{proof}
The two implications ``$\mathrm{(II.C)}\Rightarrow\mathrm{(I.C)}$''
and ``$\mathrm{(III.C)}\Rightarrow\mathrm{(II.C)}$'' are trivial.
It is therefore sufficient to prove the two implications
``$\mathrm{(I.C)}\Rightarrow\mathrm{(II.C})$'' and ``$\mathrm{(II.C)}\Rightarrow\mathrm{(III.C)}$''.

We prove ``$\mathrm{(I.C)}\Rightarrow\mathrm{(II.C)}$''.
Let us assume that $\mathrm{(I.C)}$ is true.
Let $r:\IN\to\IN$ be a total, nondecreasing, unbounded computable function.
We define a total function $f:\IN\to\IN$ recursively by
$f(0):=0$ and
\[ f(n+1) := \min\{m \in \IN ~:~ r(m)>r(f(n)) \}
\]
for all $n\in\IN$.
It is clear that $f$ is well-defined, strictly increasing and computable.
Note that 
\[ r(f(n)) = r(f(i)) = r(f(n+1)-1) , \]
for all $n\in\IN$ and all $i\in\{f(n),\ldots,f(n+1)-1\}$.
The function $s:=r\circ f:\IN\to\IN$ is a total, computable, and strictly increasing function.
By $\mathrm{(I.C)}$ the sequence $(d(x_{s(n+1)},x_{s(n)}))_n$ converges computably to $0$.
Let $g:\IN\to\IN$ be a computable modulus of convergence
of the sequence $(d(x_{s(n+1)},x_{s(n)}))_n$, that is, $g:\IN\to\IN$
is a total, computable function such that, for all $m,n\in\IN$,
\[ m\geq g(n) \Rightarrow d( x_{s(m+1)}, x_{s(m)}) \leq 2^{-n} . \]
The function $h:=f\circ g$ is total and computable as well.
We claim that it is a computable modulus of convergence of the sequence
$(d(x_{r(n+1)},x_{r(n)}))_n$ and that this sequence converges to $0$, that is,
for all $m,n\in\IN$
\[ m\geq h(n) \Rightarrow d(x_{r(m+1)},x_{r(m)}) \leq 2^{-n} . \]
Let $m\geq h(n)$.
As $r$ is nondecreasing, either $r(m+1)=r(m)$ or $r(m+1)>r(m)$.
In the first case we clearly have $d(x_{r(m+1)},x_{r(m)})=0$.
In the second case there is some $k\in\IN$ with $m+1=f(k)$.
In fact, $k>0$ as $f(0)=0$.
We observe $r(m+1)=r(f(k)) = s(k)$ and
$r(m) = r(f(k)-1) = r(f(k-1)) = s(k-1)$.
As $f$ is strictly increasing, from 
\[ f(k)=m+1 > m \geq h(n) = f(g(n)) \]
we conclude $k>g(n)$, hence, $k-1 \geq g(n)$.
Finally, we obtain
\[ d(x_{r(m+1)},x_{r(m)})  = d(x_{s(k)},x_{s(k-1)}) \leq 2^{-n}. 
\]
We have shown that $\mathrm{(I.C)}$ implies $\mathrm{(II.C)}$.

Now, we prove ``$\mathrm{(II.C)}\Rightarrow\mathrm{(III.C)}$''.
Let us assume that $\mathrm{(II.C)}$ is true.
Let $r:\IN\to\IN$ be a total, nondecreasing, computable function.
If $r$ is unbounded then the assertion,
that $(d(x_{r(n+1)},x_{r(n)}))_n$ converges computably to $0$,
is true by assumption.
Let us assume that $r$ is bounded. Then $r$ is ultimately constant, that is, there
exists a number $N\in\IN$ such that 
$r(n)=r(N)$, for all $n\geq N$.
Then $x_{r(n+1)}=x_{r(n)}$, for all $n\geq N$.
Hence, the constant function $g:\IN\to\IN$ defined by $g(n):=N$, for all $n\in\IN$, shows that
the sequence $(d(x_{r(n+1)},x_{r(n)}))_n$ converges computably to $0$.
\end{proof}

In the following lemma we consider the special case of a normed vector space and show that
in this case the condition in Definition~\ref{definition:ncc} can be modified in further ways
without changing the set of defined objects.
In the following lemma for a sequence $(y_n)_n$ in a normed vector space
we consider the following three conditions:
\begin{enumerate}
\item[(A)]
$(y_n)_n$ is computably Cauchy,
\item[(B)]
$(y_n)_n$ converges computably,
\item[(C)]
$(y_n)_n$ converges computably to the zero in the normed vector space.
\end{enumerate}
Note that (C) implies (B) and by Corollary~\ref{cor:conv-comp-Cauchy}.\ref{cor:conv-comp-Cauchy-1}
(B) implies (A).
Note also that a sequence $(y_n)_n$ converges computably to the zero in the normed vector space
iff the sequence $(|y_n|)_n$ converges computably to the real number $0$
(where for $y$ in the normed vector space by $|y|$ we denote its norm).

\begin{lemma}
\label{lemma:ncc-basic-2}
Let $X$ be a normed vector space with norm $|\cdot |$,
and let $(x_n)_n$ be a sequence in $X$.
Then the nine conditions
\begin{enumerate}
\item[(Y.Z)]
for every computable function $r:\IN\to\IN$ satisfying condition $\mathrm{(Y)}$
the sequence $(x_{r(n+1)}-x_{r(n)})_n$ satisfies condition $\mathrm{(Z)}$,
\end{enumerate}
for $\mathrm{Y}\in\{\mathrm{I},\mathrm{II},\mathrm{III}\}$
and $\mathrm{Z} \in \{\mathrm{A},\mathrm{B},\mathrm{C}\}$ are equivalent.
\end{lemma}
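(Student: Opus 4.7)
The plan is to arrange the nine conditions in a $3\times 3$ grid and reduce the full equivalence to a single nontrivial implication. For each fixed $\mathrm{Y}$, the horizontal chain $\mathrm{(Y.C)} \Rightarrow \mathrm{(Y.B)} \Rightarrow \mathrm{(Y.A)}$ is immediate: the first inclusion is trivial, and the second is Corollary~\ref{cor:conv-comp-Cauchy}.\ref{cor:conv-comp-Cauchy-1}. For each fixed $\mathrm{Z}$, the vertical chain $\mathrm{(III.Z)} \Rightarrow \mathrm{(II.Z)} \Rightarrow \mathrm{(I.Z)}$ is trivial because the admissible classes of $r$ are nested (strictly increasing $\subseteq$ nondecreasing unbounded $\subseteq$ nondecreasing). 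Furthermore, the three $\mathrm{C}$-conditions are equivalent by Lemma~\ref{lemma:ncc-basic} applied to the norm-induced metric $d(u,v):=|u-v|$, using that a sequence in $X$ converges computably to the zero vector if and only if the real sequence of its norms converges computably to $0$. It therefore suffices to close the diagram by a single further implication, for which I would choose $\mathrm{(I.A)} \Rightarrow \mathrm{(I.C)}$.

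For this implication, fix a strictly increasing computable $r$, set $y_n := x_{r(n+1)} - x_{r(n)}$, and let $g$ be the computable Cauchy modulus for $(y_n)_n$ provided by $\mathrm{(I.A)}$. It suffices to prove the classical statement $y_n \to 0$: once this is known, Corollary~\ref{cor:conv-comp-Cauchy}.\ref{cor:conv-comp-Cauchy-2} upgrades computably Cauchy plus convergence to computable convergence, and the limit is of course $0$. Since $\bigl| |y_n| - |y_m| \bigr| \leq |y_n - y_m|$, the real sequence $(|y_n|)_n$ is itself Cauchy in $\IR$ and therefore converges to some $c \geq 0$; the task is to rule out $c > 0$.

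The key step will be a geometric amplification. Apply $\mathrm{(I.A)}$ to the strictly increasing computable function $r'(n) := r(2^n)$: the resulting sequence
\[ z_n := x_{r(2^{n+1})} - x_{r(2^n)} = \sum_{k=2^n}^{2^{n+1}-1} y_k \]
is computably Cauchy and hence bounded by Lemma~\ref{lemma:unbounded-notCauchy}. Suppose for contradiction that $c > 0$ and set $\delta := c/4$. For all sufficiently large $n$ one has both $2^n \geq g(\lceil \log_2(1/\delta) \rceil)$ and $|y_{2^n}| \geq 3c/4$; the Cauchy property then gives $|y_k - y_{2^n}| \leq \delta$ for every $k \in \{2^n, \dots, 2^{n+1}-1\}$. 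A reverse triangle inequality now forces
\[ |z_n| \;\geq\; 2^n \bigl|y_{2^n}\bigr| - 2^n \delta \;\geq\; 2^n \cdot \tfrac{c}{2}, \]
contradicting the boundedness of $(z_n)_n$. Therefore $c = 0$ and $y_n \to 0$, as required.

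I expect the main obstacle to be precisely this last step. In a general normed space one cannot infer convergence from being computably Cauchy by appealing to completeness, so $\mathrm{(I.A)} \Rightarrow \mathrm{(I.B)}$ is not available directly. The amplification $r \mapsto r \circ (n \mapsto 2^n)$, combined with Lemma~\ref{lemma:unbounded-notCauchy}, is what rules out a nonzero limit $c$ of $(|y_n|)_n$ and thereby unlocks Corollary~\ref{cor:conv-comp-Cauchy}.
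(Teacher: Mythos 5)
Your proposal is correct and takes essentially the same approach as the paper: the identical reduction of the nine conditions to the single implication $\mathrm{(I.A)}\Rightarrow\mathrm{(I.C)}$ (with the $\mathrm{C}$-equivalences obtained from Lemma~\ref{lemma:ncc-basic}), followed by the same amplification argument, in which $(I.A)$ is applied to a composed index function whose consecutive blocks contain ever more consecutive terms $y_k$, all nearly equal to a vector of norm bounded away from $0$, contradicting boundedness of Cauchy sequences (Lemma~\ref{lemma:unbounded-notCauchy}). The only cosmetic difference is that the paper uses triangular-number blocks $t(n)=r\bigl(\tfrac{n(n+1)}{2}\bigr)$ and a fixed $z=y_N$ with $|z|>\varepsilon$, whereas you use dyadic blocks $r(2^n)$ and the limit $c$ of $(|y_n|)_n$.
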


\begin{proof}
The three implications ``$\mathrm{(II.Z)}\Rightarrow\mathrm{(I.Z)}$'' and
the three implications ``$\mathrm{(III.Z)}\Rightarrow\mathrm{(II.Z)}$'',
for $\mathrm{Z} \in \{\mathrm{A},\mathrm{B},\mathrm{C}\}$ are trivial.
The three implications ``$\mathrm{(Y.B)}\Rightarrow\mathrm{(Y.A)}$'' 
for $\mathrm{Y} \in \{\mathrm{I},\mathrm{II},\mathrm{III}\}$ follow from
Corollary~\ref{cor:conv-comp-Cauchy}.\ref{cor:conv-comp-Cauchy-1}, and
the three implications ``$\mathrm{(Y.C)}\Rightarrow\mathrm{(Y.B)}$'' 
for $\mathrm{Y} \in \{\mathrm{I},\mathrm{II},\mathrm{III}\}$ are trivial.
The equivalence of the three conditions $\mathrm{(I.C)}$, $\mathrm{(II.C)}$,
and $\mathrm{(III.C)}$ follows from Lemma~\ref{lemma:ncc-basic} because
a sequence $(x_{r(n+1)}-x_{r(n)})_n$ converges computably to the zero in $X$
iff the sequence $(|x_{r(n+1)} - x_{r(n)}|)_n$ converges computably to the real number $0$.

It is therefore sufficient to prove the implication
``$\mathrm{(I.A)}\Rightarrow\mathrm{(I.C)}$''.
Let us assume that $\mathrm{(I.A)}$ is true.
Let $r:\IN\to\IN$ be a total, strictly increasing, computable function.
We define $y_n:=x_{r(n+1)}-x_{r(n)}$, for all $n\in\IN$.
By $\mathrm{(I.A)}$ the sequence $(y_n)_n$ is computably Cauchy.
We wish to show that it converges computably to $0 \in X$.
According to Corollary~\ref{cor:conv-comp-Cauchy}.\ref{cor:conv-comp-Cauchy-2}
it is sufficient to show that it converges to $0\in X$.
For the sake of a contradiction, let us assume that it does not converge to $0$.
Then there exists a real number $\varepsilon>0$ such that
$|y_n|>\varepsilon$ for infinitely many $n$.
Remember that the sequence $(y_n)_n$ is computably Cauchy, hence, Cauchy.
We choose a number $N\in\IN$ such that $|y_N|>\varepsilon$ and, for all $m,n \geq N$,
$|y_m-y_n|\leq \varepsilon/2$.
We define $z:=y_N$. Then $|z| > \varepsilon$ and, for all $n\geq N$,
$|y_n - z|\leq \varepsilon/2$.
We define a total, strictly increasing, and computable function
$t:\IN\to\IN$ by 
\[ t(n):= r \left( \frac{n(n+1)}{2} \right) , \]
for all $n\in\IN$.
On the one hand, by (I.A) the sequence
$(x_{t(n+1)}-x_{t(n)})_n$ is computably Cauchy, hence, Cauchy.
On the other hand, for all sufficiently large $i$, we have
\[ \left| x_{r(i+1)}-x_{r(i)} - z \right| = |y_i - z| \leq \varepsilon/2. \]
We obtain for all sufficiently large $n$
\begin{eqnarray*}
 \left|x_{t(n+1)}-x_{t(n)} - (n+1)\cdot z \right|
  &=& \left|x_{r \left( \frac{(n+1)(n+2)}{2} \right)} - x_{r \left( \frac{n(n+1)}{2} \right)}
  - (n+1)\cdot z \right| \\
  &=& \left| \sum_{i=0}^n \left( x_{r \left(i+1+ \frac{n(n+1)}{2} \right)}
      - x_{r \left(i+ \frac{n(n+1)}{2} \right)}\right)  - (n+1)\cdot z\right| \\
  &\leq & \sum_{i=0}^n \left| x_{r \left(i+1+ \frac{n(n+1)}{2} \right)}
      - x_{r \left(i+ \frac{n(n+1)}{2} \right)}  - z\right| \\
  &\leq & (n+1) \cdot \varepsilon/2 \\
  &<& \frac{n+1}{2} \cdot |z| .
\end{eqnarray*}
Hence, for all sufficiently large $n$ we obtain
\[ \left|x_{t(n+1)}-x_{t(n)}\right| \geq \frac{n+1}{2} \cdot |z| . \]
Thus, the sequence $(x_{t(n+1)}-x_{t(n)})_n$ is unbounded.
This is in contradiction to the fact that the sequence
$(x_{t(n+1)}-x_{t(n)})_n$ is a Cauchy sequence; compare Lemma~\ref{lemma:unbounded-notCauchy}.
We have shown that the sequence $(x_{r(n+1)}-x_{r(n)})_n$ converges to $0$.
Thus, we have shown that $\mathrm{(I.A)}$ implies $\mathrm{(I.C)}$.
\end{proof}

In Lemma~\ref{lemma:cChauchy-ncc-1} we have seen that any computably Cauchy sequence
is nearly computably Cauchy.
At the conference CCA 2018, after the presentation of \cite{Jan18},
M.~Schr{\"o}der asked the following question:
is every nearly computably Cauchy sequence necessarily a Cauchy sequence?
Perhaps at least if it is an increasing sequence of real or rational numbers?
It turns out that for the answer it is important whether $(x_n)_n$ is computable or not.

\begin{proposition}
Let $(X,d,\alpha)$ be a computable metric space.
Let $(x_n)_n$ be a computable sequence in $X$.
If it is nearly computably Cauchy then it is Cauchy.
\end{proposition}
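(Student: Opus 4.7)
The plan is to prove the contrapositive: if the computable sequence $(x_n)_n$ is not Cauchy, then it is not nearly computably Cauchy. Negating the Cauchy condition yields a positive rational $\varepsilon > 0$ such that for every $N \in \IN$ there exist indices $m > n' \geq N$ with $d(x_m, x_{n'}) > 3\varepsilon$. The goal is then to computably build a total, strictly increasing $r : \IN \to \IN$ along which consecutive distances $d(x_{r(n+1)}, x_{r(n)})$ stay bounded below by $\varepsilon$; such a sequence of distances does not converge to $0$, let alone computably, contradicting that $(x_n)_n$ is nearly computably Cauchy.

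The key enabling fact is that a computable sequence in the computable metric space $(X,d,\alpha)$ admits, for each pair of indices $i,j$, rational approximations to $d(x_i, x_j)$ of arbitrary precision via the dense sequence $\alpha$; hence any strict inequality $d(x_i, x_j) > q$ with a rational $q$ is computably verifiable, since it is certified as soon as a sufficiently fine approximation strictly exceeds $q$. Using this, I would define $r$ by recursion. Put $r(0) := 0$. Given $r(n)$, dovetail a search over all pairs $r(n) < n' < m$ for a certificate that $d(x_m, x_{n'}) > 3\varepsilon$; by the choice of $\varepsilon$ such a pair exists, so the search terminates. Once $(m, n')$ is found, the triangle inequality
\[ 3\varepsilon < d(x_m, x_{n'}) \leq d(x_{r(n)}, x_m) + d(x_{r(n)}, x_{n'}) \]
forces at least one of $d(x_{r(n)}, x_m) > 3\varepsilon/2$ or $d(x_{r(n)}, x_{n'}) > 3\varepsilon/2$ to hold. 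I would then dovetail searches for certificates of the weaker strict inequalities $d(x_{r(n)}, x_m) > \varepsilon$ and $d(x_{r(n)}, x_{n'}) > \varepsilon$; at least one certifies with strict slack, and I set $r(n+1)$ to be the corresponding index, which exceeds $r(n)$.

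By construction $r$ is total, computable, and strictly increasing, and $d(x_{r(n+1)}, x_{r(n)}) > \varepsilon$ for every $n$, so the sequence $(d(x_{r(n+1)}, x_{r(n)}))_n$ does not tend to $0$, contradicting nearly computably Cauchy and completing the proof. The one subtle point, and the step where care is required, is ensuring that the recursive step always terminates: this is precisely why I inflate the initial target to $3\varepsilon$ and then weaken the secondary target to $\varepsilon$. The extra slack converts the non-strict inequality coming from the triangle inequality into a strict one with a positive margin of at least $\varepsilon/2$, and strict inequalities with a positive margin are exactly what the computable approximations of $d$ can certify in finite time.
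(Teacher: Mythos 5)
Your proposal is correct and follows essentially the same route as the paper: both exploit the fact that for a computable sequence in a computable metric space strict inequalities of the form $d(x_i,x_j)>q$ with rational $q$ are computably certifiable, and use this to build a total, strictly increasing, computable index function along which consecutive distances stay bounded away from $0$, contradicting near computable Cauchyness. The only cosmetic difference is that you anchor each step at $x_{r(n)}$ via the triangle inequality so that \emph{every} consecutive gap exceeds $\varepsilon$, whereas the paper's function $t$ only guarantees large gaps for the pairs $(t(2n),t(2n+1))$, which already suffices.
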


\begin{proof}
Let $(x_n)_n$ be a computable sequence in $X$ 
that is nearly computably Cauchy.
We wish to show that it is a Cauchy sequence.
For the sake of a contradiction, let us assume that this is not the case.
Then there exists an integer $n_0$ such that for any $N\in\IN$
there are integers $k,l$ with $N \leq k < l$ and $d(x_k,x_l) > 5\cdot 2^{-n_0}$.
There exists a computable function $f:\IN\to\IN$ such that
$d(\alpha_{f(n)},x_n) < 2^{-n_0}$, for all $n\in\IN$.
Let $t:\IN\to\IN$ be defined as follows. Let $t(0), t(1)$ 
be the smallest pair of numbers $k,l$ with $k<l$ and $d(\alpha_{f(k)},\alpha_{f(l)}) > 3\cdot 2^{-n_0}$
(to be precise, that means that $t(0)$ is the smallest number $k$
such that there exists a number $l$ with $k<l$ and $d(\alpha_{f(k)},\alpha_{f(l)}) > 3\cdot 2^{-n_0}$,
and $t(1)$ is the smallest number $l$ with these properties).
And for $n>0$ let $t(2n),t(2n+1)$ be the smallest pair of numbers $k,l$ with
$t(2n-1) < k < l$ and $d(\alpha_{f(k)},\alpha_{f(l)}) > 3\cdot 2^{-n_0}$.
It is clear that $t$ is well-defined, total, strictly increasing, and computable.
Furthermore, for all $n\in\IN$,
$d(\alpha_{f(t(2n))},\alpha_{f(t(2n+1))}) > 3\cdot 2^{-n_0}$, hence,
$d(x_{t(2n)},x_{t(2n+1)}) > 2^{-n_0}$.
But as $(x_n)_n$ is nearly computably Cauchy
the sequence $(d(x_{t(n+1)},x_{t(n)}))_n$ should converge to $0$. Contradiction.
\end{proof}

In particular, if a computable sequence of real numbers is nearly computably Cauchy then it converges.
But in the general case the answer to the question formulated before the previous proposition is no.

\begin{theorem}
\label{theorem:increasing-ncc}
There exists a strictly increasing and unbounded sequence of rational numbers that
is nearly computably Cauchy.
\end{theorem}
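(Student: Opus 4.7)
The plan is to write $a_n = \sum_{i<n} b_i$ with positive rational increments and to split each $b_i$ into a baseline part $2^{-i}$ plus, at sparsely placed positions $n_0 < n_1 < n_2 < \cdots$, an extra ``chunk'' of size $c_s = 1/(s+1)$. Since $\sum_s c_s = +\infty$ and $\sum_i 2^{-i} < +\infty$, this automatically yields a sequence $(a_n)_n$ that is strictly increasing and unbounded for any choice of the $n_s$; the real content is in choosing $(n_s)$ so that the nearly computably Cauchy condition holds. The block sum along a computable strictly increasing $r:\IN\to\IN$ splits as a baseline piece bounded by $2^{-r(n)}$ plus a chunk piece $\sum\{1/(s+1): n_s\in(r(n),r(n+1)]\}$, so my aim is to enforce (A) that, for each such $r$, eventually every $r$-block contains at most one chunk position, and (B) that the index $s$ of that chunk grows effectively in the block index.

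Property (A) is easy to arrange: if $n_{s+1}>r(n_s)$ and $n_s\in(r(N),r(N+1)]$, then $N+1\leq n_s$ (because $r$ strictly increasing forces $r(N)\geq N$), so $r(N+1)\leq r(n_s)<n_{s+1}$, placing $n_{s+1}$ in a strictly later block. To secure $n_{s+1}>\varphi_e(n_s)$ eventually for every total computable $\varphi_e$, I would build $(n_s)$ by recursion (relative to a halting oracle) in finite-injury style, setting
\[
 n_{s+1} \;:=\; 1+\max\{\varphi_e(n_s): e\leq s,\ \varphi_e(n_s)\downarrow \text{ within } s \text{ steps}\},
\]
with corrections when previously skipped $\varphi_e$ later converge. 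For each total $\varphi_e$, this guarantees the required inequality from some stage onward.

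The hard part will be establishing (B): that for each total strictly increasing $\varphi_e$, the modulus of convergence of the block sequence is in fact \emph{computable} (non-uniformly in $e$ is allowed, and is essential). The naive bound $g_e(k)\geq n_{2^{k+1}-1}$ is unusable because $(n_s)$ is forced to dominate every computable function (otherwise, taking $\varphi_e=f+1$ for a computable upper bound $f$ would break (A)), so the proposition preceding the theorem rules out making $(n_s)$ computable. The crucial point I would exploit is that for a fixed $\varphi_e$, the $\varphi_e$-block index $\varphi_e^{-1}(n_{2^{k+1}-1})$ may grow far more slowly than $n_{2^{k+1}-1}$ itself. Concretely, I would arrange the priority construction so that for each $e$ there is a stage $s_e$ from which on all further positions $n_s$ are chosen using $\varphi_e$ among the active witnesses, forcing a computable relationship between $k$ and $\varphi_e^{-1}(n_{2^{k+1}-1})$; the finitely many earlier positions $n_0,\ldots,n_{s_e-1}$ contribute only finitely many exceptions, which can be hard-coded into the definition of $g_e$.

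Once a computable $g_e$ is produced from this analysis, the bound
\[
 a_{r(n+1)}-a_{r(n)} \;\leq\; 2^{-r(n)} + \frac{1}{2^{k+1}} \;\leq\; 2^{-k}\qquad(n\geq g_e(k))
\]
closes the verification: strict monotonicity and unboundedness follow from $b_i>0$ and $\sum c_s=\infty$, while nearly computably Cauchy follows from (A), (B), and the block-sum bound just displayed. The main technical obstacle is therefore the computability of $g_e$; the rest is bookkeeping within the priority construction.
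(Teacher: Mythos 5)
The gap is exactly the step you flag as ``the hard part'', and it is not merely hard: in your setup, (B) is impossible, because it conflicts with your own implementation of (A). The identity is itself a total, strictly increasing, computable function, so near computable Cauchyness of $(a_n)_n$ requires in particular a computable modulus $g$ for the step sequence $(a_{n+1}-a_n)_n$. Since the step at index $n_s$ is at least $c_s=1/(s+1)$, putting $k_s:=\lceil\log_2(s+1)\rceil+1$ forces $n_s< g(k_s)$ for \emph{every} $s$; that is, the chunk positions are bounded by the computable function $G(s):=g(k_s)$. On the other hand, your recursion for (A) is designed so that for every total, strictly increasing, computable $\varphi_e$ one eventually has $n_{s+1}>\varphi_e(n_s)\ge\varphi_e(s)$ (using $n_s\ge s$), so $(n_{s+1})_s$ dominates every computable function and in particular cannot be bounded by $G$. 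Thus the sparseness you impose to handle fast $\varphi_e$ destroys the computable modulus needed for the slow ones; the hoped-for effect that the $\varphi_e$-block index of $n_{2^{k+1}-1}$ grows slowly can only help when $\varphi_e$ is fast, while for $\varphi_e=\mathrm{id}$ the block index \emph{is} the position itself.

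Moreover, no refinement of the priority argument can rescue this decomposition, because fixing the chunk sizes in advance as a computable function of the chunk index is already incompatible with the conclusion. Suppose $(a_n)_n$ is of your form and nearly computably Cauchy. As above there is a computable, without loss of generality strictly increasing, $G$ with $s\le n_s\le G(s)$ for all $s$. Define $A_0:=1$, $A_{j+1}:=G(2A_j)+1$ and $r(j):=A_j$; then $r$ is computable and strictly increasing, and every chunk index $s\in(A_j,2A_j]$ has its position $n_s$ in $[r(j)+1,\,r(j+1)-1]$, so $a_{r(j+1)}-a_{r(j)}\ \ge\ \sum_{s=A_j+1}^{2A_j}\tfrac{1}{s+1}\ \ge\ \tfrac13$ for all $j$. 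Hence the block sums along this $r$ do not even converge to $0$, contradicting Definition~\ref{definition:ncc}. This is precisely the trap the paper's proof avoids: there the increment sizes are not fixed in advance; instead Lemma~\ref{lemma:nondecreasing-ncc} provides a computable operator which, for one computable subsequence function at a time, compresses the tail of the current (still computable) approximation so that the corresponding block sums acquire a modulus computable from that approximation, never increases any difference (so moduli established at earlier stages survive all later stages), and still lets the sequence climb by at least $1$ per compressed region (so unboundedness is preserved). Any successful variant of your chunk picture would have to let the chunk sizes shrink in such an adaptive, non-computable fashion, which is in effect what that iterated compression does.
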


Note that by Lemma~\ref{lemma:unbounded-notCauchy}
an unbounded sequence cannot be a Cauchy sequence, 
hence, it cannot be convergent.
For the proof of this theorem we first formulate a lemma.
For this lemma we need some further well-known computability-theoretic notions.
The notion of a computable function $H:\subseteq\IN^\IN\to\IN^\IN$ is used in the usual sense;
for a precise definition see, e.g.,~\cite[Section 9.2.3]{Sch21}.
Let $\rho_{\IQ^\IN}:\IN^\IN\to\IQ^\IN$ be defined by
$\rho_{\IQ^\IN}(p):=(\nu_\IQ(p_0),\nu_\IQ(p_1),\nu_\IQ(p_2),\ldots)$,
for $p=(p_i)_i \in\IN^\IN$.
A function $F:\subseteq \IQ^\IN\times\IN\times\IN^\IN\to\IQ^\IN$ is called {\em computable} if
there exists a computable function $H:\subseteq\IN^\IN\to\IN^\IN$ with
\begin{eqnarray*}
\dom(H) &=& \{p\in\IN^\IN ~:~ (\rho_{\IQ^\IN}(p_1p_3p_5\ldots), p_0, (p_2p_4p_6\ldots)) \in\dom(F) \} \\
\rho_{\IQ^\IN}(H(p)) &=& F(\rho_{\IQ^\IN}(p_1p_3p_5\ldots), p_0, (p_2p_4p_6\ldots)),
\end{eqnarray*}
for $p\in\dom(H)$.
Similarly, the notion of a computable function $G:\subseteq \IQ^\IN\times\IN\times\IN^\IN\to\IN^\IN$
is defined.

\begin{lemma}
\label{lemma:nondecreasing-ncc}
There exist computable functions $F:\subseteq \IQ^\IN\times\IN\times\IN^\IN\to\IQ^\IN$ and
$G:\subseteq \IQ^\IN\times\IN\times\IN^\IN\to\IN^\IN$ such that for any triple
$(a, N, s)$ consisting of
\begin{itemize}
\item
a strictly increasing, unbounded sequence $a:\IN\to\IQ$ of rational numbers,
\item
a natural number $N$, and
\item
a strictly increasing function $s:\IN\to\IN$
\end{itemize}
both $b:=F(a, N, s)$ and $g:= G(a, N, s)$ are defined and satisfy the following conditions:
\renewcommand\theenumi{\Roman{enumi}}
\begin{enumerate}
\item
$b:\IN\to\IQ$ is a strictly increasing, unbounded sequence of rational numbers,
\item
for all $k\in\IN$, if $a_k\leq N$ then $b_k=a_k$,
\item
for all $k,l\in\IN$, if $k \leq l$ then $b_l - b_k \leq a_l - a_k$,
\item
the sequence $(b_{s(k+1)} - b_{s(k)})_k$ converges to $0$
and the function $g:\IN\to\IN$ is a modulus of convergence of this sequence.
\end{enumerate}
\renewcommand\theenumi{\arabic{enumi}}
\end{lemma}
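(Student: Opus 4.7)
The plan is to build $b$ from $a$ by copying $a$ on an initial segment (to secure (II)) and then, on each subsequent block $[s(k), s(k+1))$, prescribing a block sum $U_k := b_{s(k+1)} - b_{s(k)}$ which I will distribute among the increments of $a$ inside the block proportionally. Each increment of $b$ will then be dominated by the corresponding increment of $a$, so property (III) will follow by telescoping from the pointwise bound $\delta_j \leq \Delta_j$. The whole construction, and the modulus $g$, will be uniformly computable from $(a, N, s)$ by direct simulation.

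Concretely, I first compute $K_N := \max\{j \in \IN : a_j \leq N\}$ (well defined since $a$ is strictly increasing and unbounded; take $K_N := -1$ if no such $j$ exists) together with the smallest $k_0$ with $s(k_0) > K_N$. Set $b_j := a_j$ for $j \leq s(k_0)$. For $k \geq k_0$ let $A_k := a_{s(k+1)} - a_{s(k)} > 0$ and, for each $j \in [s(k), s(k+1))$, define $\delta_j := (\Delta_j / A_k)\cdot U_k$ (with $\Delta_j := a_{j+1} - a_j$) and $b_{j+1} := b_j + \delta_j$. Then $\sum_{j \in [s(k), s(k+1))} \delta_j = U_k$ and, provided that $0 < U_k \leq A_k$ is guaranteed by the choice of $U_k$, each $\delta_j$ is a positive rational bounded by $\Delta_j$, so $b$ is a strictly increasing sequence of rationals satisfying (III).

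The block sums $U_k$ themselves will be chosen by a \emph{staircase} scheme. I maintain an integer level $n_k$ (initialised $n_{k_0} := 0$) and a running partial sum $\sigma_k$ (initialised $\sigma_{k_0} := 0$), setting $U_k := \min(A_k, 2^{-n_k})$ at each step and advancing the level by $1$ (and resetting $\sigma$ to $0$) as soon as the partial sum at the current level reaches $1$. I then define $g(m)$ to be the first $k$ at which $n_k \geq m$, which is clearly computable by simulation. Property (I) (that $b$ is unbounded) and property (IV) (that $g$ is a modulus for the block sums) both reduce to the claim that $n_k \to \infty$, i.e., that every level is completed after finitely many steps; this in turn reduces to the divergence $\sum_{k \geq K} \min(A_k, 2^{-n}) = \infty$ for every $n$ and every $K$.

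The main technical point is therefore this divergence, which follows quickly from $\sum_k A_k = \infty$ (equivalent to $a$ being unbounded): if only finitely many $A_k$ exceed $2^{-n}$, then the tail of $\min(A_k, 2^{-n})$ coincides with the tail of $(A_k)_k$ and is therefore infinite; otherwise the tail contains infinitely many terms equal to $2^{-n}$ and is again infinite. Granted this, each completed level contributes at least $1$ to $\sum U_k$, so $\sum_{k \geq k_0} U_k = \infty$ and hence $b_{s(k)} \to \infty$, giving (I); and $U_k \leq 2^{-n_k} \leq 2^{-m}$ for $k \geq g(m)$ gives (IV). Since every step above, including the level updates and the computation of $g$, is a routine simulation from $(a, N, s)$, the required computable $F$ and $G$ exist.
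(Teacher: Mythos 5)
Your proposal is correct and follows essentially the same construction as the paper: copy $a$ on an initial segment covering all indices with $a_k\leq N$, then on each block $[s(k),s(k+1))$ distribute the block sum $\min\{a_{s(k+1)}-a_{s(k)},2^{-n}\}$ proportionally over the increments of $a$, advancing the level $n$ (and recording it in $g$) once the accumulated block sums at that level reach $1$, which simultaneously gives unboundedness and the modulus. Your staircase bookkeeping with $(n_k,\sigma_k)$ and the explicit divergence argument are just a slightly more detailed presentation of what the paper does with its recursive definition of $g(n+1)$ and the remark that it is well defined because $a$ is unbounded.
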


Conditions (II) and (III) mean that the sequence $b$ is obtained from the sequence $a$
by ``compressing'' the part of $a$ with entries larger than $N$ and by leaving the other part unchanged.

\begin{proof}
Let a strictly increasing, unbounded sequence $a:\IN\to\IQ$ of rational numbers,
a natural number $N$, and
a strictly increasing function $s:\IN\to\IN$ be given.
We construct the desired sequence $b:\IN\to\IQ$ and the desired function $g:\IN\to\IN$ step by step.
Note that we will ensure that $b_{s(g(n))} \geq N+n$, for all $n$.
This implies that the new sequence $(b_n)_n$ will be unbounded.
Furthermore, the constructed function $g$ will be strictly increasing.
First, we define
\[ g(0) := \min\{k \in \IN ~:~ a_{s(k)} \geq N \} \]
and $b_j:=a_j$, for all $j\leq s(g(0))$. 
Note that this ensures that $b_{s(g(0))} \geq N$ and that Condition (II) is satisfied.
Let us assume that for some $n\in\IN$ we have already defined
$g(0),\ldots,g(n)$ and $b_0,\ldots, b_{s(g(n))}$ such that
\begin{itemize}
\item
$g(0)< \ldots < g(n)$,
\item
$b_0 < \ldots < b_{s(g(n))}$, 
\item
for all $m\leq n$, $b_{s(g(m))} \geq N+m$,
\item
for all $k,l\leq s(g(n))$, if $k\leq l$ then $b_l - b_k \leq a_l - a_k$,
\item
for all $m< n$ and all $i$ with $g(m)\leq i < g(n)$, 
$b_{s(i+1)} - b_{s(i)} \leq 2^{-m}$.
\end{itemize}
We define
\[ g(n+1) := \min\left\{ k \in \IN ~:~ k > g(n) \text{ and }
     \sum_{i=g(n)}^{k-1}   \min \left\{ 2^{-n}, a_{s(i+1)} - a_{s(i)}\right\} \geq 1 \right\} . \]
This value $g(n+1)$ is well-defined because the sequence $a$ is unbounded.
For $i=g(n),\ldots,g(n+1)-1$ and $j=s(i)+1,\ldots,s(i+1)$ we define
\[  b_j := \begin{cases}
       b_{j-1} + (a_j - a_{j-1}) & \text{if } a_{s(i+1)} - a_{s(i)} \leq 2^{-n} , \\
       b_{j-1} + (a_j - a_{j-1}) \cdot \frac{2^{-n}}{a_{s(i+1)} - a_{s(i)}} & \text{otherwise}.
       \end{cases}.
\]
We have defined $b_{s(g(n))+1}, \ldots, b_{s(g(n+1))}$.
The construction is illustrated in Figure~\ref{figure:one}.
\begin{figure}[htbp]
\begin{center}
\begin{tikzpicture}[scale=1,>=Latex]
\draw[thick] (-2,1) -- (13,1);
\draw[thick] (-2,-1) -- (13,-1);
\draw (-2.4,1) node {$a$};
\draw (-2.4,-1) node {$b$};
\draw (-2,0.8) -- (-2,1.2);
\draw (0,0.8) -- (0,1.2);
\draw (-2,-1.2) -- (-2,-0.8);
\draw (0,-1.2) -- (0,-0.8);
\draw (-2,1.4) node {$0$};
\draw (-2,-1.4) node {$0$};
\draw (0,1.4) node {$N$};
\draw (0,-1.4) node {$N$};
\draw (-1,0.8) node {$\cdots$};
\draw (-1,-0.8) node {$\cdots$};
\draw[->] (-1,0.7) -- (-1,-0.7);
\draw[very thick] (1,0.8) -- (1,1.2);
\draw[very thick] (1,-0.8) -- (1,-1.2);
\draw (1,1.4) node {$a_{s(g(0))}$};
\draw (1,-1.4) node {$b_{s(g(0))}$};
\draw[->] (1,0.7) -- (1,-0.7);
\draw (2.4,0.8) node {$\cdots$};
\draw (1.9,-0.8) node {$\cdots$};
\draw[very thick] (3.8,0.8) -- (3.8,1.2);
\draw[very thick] (2.8,-0.8) -- (2.8,-1.2);
\draw (3.8,1.4) node {$a_{s(g(2))}$};
\draw (2.8,-1.4) node {$b_{s(g(2))}$};
\draw[->] (3.8,0.7) -- (2.8,-0.7);
\draw[very thick] (5.2,0.8) -- (5.2,1.2);
\draw[very thick] (3.8,-0.8) -- (3.8,-1.2);
\draw[->] (5.2,0.7) -- (3.8,-0.7);
\draw[very thick] (4.5,0.9) -- (4.5,1.1);
\draw[very thick] (3.3,-0.9) -- (3.3,-1.1);
\draw[very thick] (5.8,0.8) -- (5.8,1.2);
\draw[very thick] (5.4,0.9) -- (5.4,1.1);
\draw[very thick] (4.0,-0.9) -- (4.0,-1.1);
\draw[very thick] (5.6,0.9) -- (5.6,1.1);
\draw[very thick] (4.2,-0.9) -- (4.2,-1.1);
\draw[very thick] (4.4,-0.8) -- (4.4,-1.2);
\draw[->] (5.8,0.7) -- (4.4,-0.7);
\draw[very thick] (6.0,0.8) -- (6.0,1.2);
\draw[very thick] (4.6,-0.8) -- (4.6,-1.2);
\draw[->] (6.0,0.7) -- (4.6,-0.7);
\draw[very thick] (8.6,0.8) -- (8.6,1.2);
\draw[very thick] (5.6,-0.8) -- (5.6,-1.2);
\draw[->] (8.6,0.7) -- (5.6,-0.7);
\draw[very thick] (6.52,0.9) -- (6.52,1.1);
\draw[very thick] (4.8,-0.9) -- (4.8,-1.1);
\draw[very thick] (7.56,0.9) -- (7.56,1.1);
\draw[very thick] (5.2,-0.9) -- (5.2,-1.1);
\draw[very thick] (9.0,0.8) -- (9.0,1.2);
\draw[very thick] (6.0,-0.8) -- (6.0,-1.2);
\draw[->] (9.0,0.7) -- (6.0,-0.7);
\draw[very thick] (12.0,0.8) -- (12.0,1.2);
\draw[very thick] (7.0,-0.8) -- (7.0,-1.2);
\draw[->] (12.0,0.7) -- (7.0,-0.7);
\draw (12.0,1.4) node {$a_{s(g(3))}$};
\draw (7.0,-1.4) node {$b_{s(g(3))}$};
\draw[very thick] (10.5,0.9) -- (10.5,1.1);
\draw[very thick] (6.5,-0.9) -- (6.5,-1.1);
\draw (2.8,-2) -- (4.6,-2);
\draw (5.0,-2) -- (6.8,-2);
\draw (4.8,-2) node {$1$};
\draw (2.8,-2.2) -- (2.8,-1.8);
\draw (6.8,-2.2) -- (6.8,-1.8);
\end{tikzpicture}
\caption{An illustration of the construction in the proof of Lemma~\ref{lemma:nondecreasing-ncc}.}
\label{figure:one}
\end{center}
\end{figure}
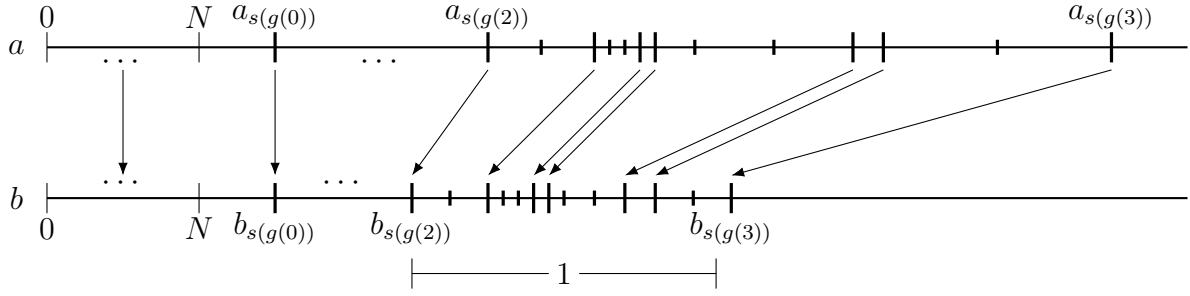
It is clear that $b_0 < \ldots < b_{s(g(n+1))}$ and that,
for all $k,l\leq s(g(n+1))$, if $k\leq l$ then $b_l - b_k \leq a_l - a_k$.
Furthermore, for $i\in\{ g(n),\ldots,g(n+1)-1\}$,
\[ b_{s(i+1)}  - b_{s(i)} = \begin{cases}
         a_{s(i+1)} - a_{s(i)} & \text{if } a_{s(i+1)} - a_{s(i)} \leq 2^{-n} , \\
         2^{-n} & \text{otherwise}.
         \end{cases}
\]
Finally, 
\begin{eqnarray*}
  b_{s(g(n+1))} 
   &=&  b_{s(g(n))} + b_{s(g(n+1))}  - b_{s(g(n))}  \\
   &=& b_{s(g(n))} + \sum_{i=g(n)}^{g(n+1)-1} (b_{s(i+1)}  - b_{s(i)}) \\
   &\geq & N+n + \sum_{i=g(n)}^{g(n+1)-1}   \min \left\{ 2^{-n}, a_{s(i+1)} - a_{s(i)}\right\} \\
   &\geq & N+n+1 . 
\end{eqnarray*}        
\end{proof}

\begin{proof}[Proof of Theorem~\ref{theorem:increasing-ncc}]
We define a sequence $(q_n)_n$ of rational numbers as follows.
Let $F$ and $G$ be computable functions as in Lemma~\ref{lemma:nondecreasing-ncc}.
Let $(s^{(i)})_i$ be a sequence consisting of all total, strictly increasing, computable functions
$s^{(i)}:\IN\to\IN$ (of course, this sequence is not computable).
We define a sequence $(a^{(i)})_i$ of sequences $a^{(i)}:\IN\to\IQ$ of rational numbers as follows.
Let $a^{(0)}$ be defined by $a^{(0)}_n:=n$, for all $n$.
For all $i\in\IN$ we define 
\[ a^{(i+1)} := F(a^{(i)},i+1,s^{(i)} )  . \]
The sequence $a^{(0)}$ is a strictly increasing, unbounded, computable sequence of rational numbers.
As $F$ is computable, it follows by induction that, for all $i\in\IN$,
the sequence $a^{(i)}$ is a strictly increasing, unbounded, computable sequence of rational numbers.
Finally, we define $q_n:= \lim_{i\to\infty} a^{(i)}_n$, for all $n$.
We claim that the sequence $q:\IN\to\IQ$ is well-defined and has the desired properties.
First, we show that $q$ is well-defined and, for all $i,n\in\IN$, 
\begin{equation}
\label{eq:proposition:nondecreasing-ncc}
 \text{if } n \leq i \text{ then } q_n = a^{(i)}_n .
\end{equation}
As $a^{(0)}_0=0<j$, for all $j>0$, by Condition (II) in Lemma~\ref{lemma:nondecreasing-ncc} we obtain
$a^{(j)}_0=0$ for all $j\in\IN$, by induction.
Let us consider some $n$. Due to Condition (III) in Lemma~\ref{lemma:nondecreasing-ncc}, for all $j\in\IN$,
$a^{(j)}_n = a^{(j)}_n - a^{(j)}_0 \leq a^{(0)}_n - a^{(0)}_0 = n < n+1$, for all $j\in\IN$.
But then, by Condition (II) in Lemma~\ref{lemma:nondecreasing-ncc},
$a^{(i+1)}_n=a^{(i)}_n$, for all $i\geq n$.
This implies that $q=\lim_{i\to\infty} a^{(i)}_n$ is well-defined, for all $n\in\IN$, and that 
\eqref{eq:proposition:nondecreasing-ncc} is true.
The sequence $q$ is strictly increasing because, for all $n\in\IN$,
\[ q_{n+1} - q_n = a^{(n+1)}_{n+1} - a^{(n+1)}_n > 0 . \]
Next, we show that $q$ is unbounded.
For an arbitrary $i$, let $n$ be the largest number with $a^{(i)}_n \leq i+1$. 
Then, by Condition (II) in Lemma~\ref{lemma:nondecreasing-ncc},
$a^{(j)}_n=a^{(i)}_n$, for all $j\geq i$, hence, by \eqref{eq:proposition:nondecreasing-ncc},
$q_n=a^{(\max\{n,i\})}_n = a^{(i)}_n$
and, by Condition (III) in Lemma~\ref{lemma:nondecreasing-ncc},
\[ a^{(i)}_{n+1} - a^{(i)}_n \leq a^{(0)}_{n+1} - a^{(0)}_n = n+1 - n= 1, \]
hence, $q_n = a^{(i)}_n \geq a^{(i)}_{n+1} - 1 > i+1 -1 = i$.
Finally, for all $i\in\IN$ the function $g^{(i)}:= G(a^{(i)},i+1,s^{(i)})$ is computable because $G$ is computable.
We claim that it is a modulus of convergence to $0$ of the sequence $(q_{s^{(i)}(k+1)} - q_{s^{(i)}(k)})_k$.
By Condition (IV) in Lemma~\ref{lemma:nondecreasing-ncc} the function $g^{(i)}$
is a modulus of convergence to $0$ of the sequence
$(a^{(i)}_{s^{(i)}(k+1)} - a^{(i)}_{s^{(i)}(k)})_k$. 
We show that this carries over from $a^{(i)}$ to $q$.
Indeed, let us consider some $m,n\in\IN$ with $m\geq g^{(i)}(n)$. 
We choose some $k\geq \max\{s^{(i)}(m+1),i\}$. 
By~\eqref{eq:proposition:nondecreasing-ncc} and
by Condition (III) in Lemma~\ref{lemma:nondecreasing-ncc} we obtain
\[ q_{s^{(i)}(m+1)} - q_{s^{(i)}(m)} 
   = a^{(k)}_{s^{(i)}(m+1)} -  a^{(k)}_{s^{(i)}(m)}
   \leq a^{(i)}_{s^{(i)}(m+1)} -  a^{(i)}_{s^{(i)}(m)}
   \leq 2^{-n} . \]
We have shown that $q$ is a strictly increasing, unbounded sequence of rational numbers satisfying
Condition (I.C) in Lemma~\ref{lemma:ncc-basic},
thus, it is nearly computably Cauchy.
\end{proof}

In the rest of this section we make several useful observations about
nearly computably Cauchy sequences in a metric space or a normed space
and about nondecreasing sequences of real numbers that are nearly computably Cauchy.
The following lemma will be used several times for sequences $(x_n)_n$ and $(y_n)_n$ 
satisfying even $d(x_n,y_n)\leq 2^{-n}$, for all $n\in\IN$.

\begin{lemma}
\label{lemma:ncc-pair-close}
Let $(X,d)$ be a metric space.
Let $(x_n)_n$ and $(y_n)_n$ be sequences in $X$ such that the sequence of real numbers
$(d(x_n,y_n))_n$ converges computably to $0$.
Then the sequence $(x_n)_n$ is nearly computably Cauchy if, and only if,
the sequence $(y_n)_n$ is nearly computably Cauchy.
\end{lemma}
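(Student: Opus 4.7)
The plan is to prove one direction; the other follows by symmetry, since the hypothesis on $(d(x_n,y_n))_n$ is symmetric in $x$ and $y$. So assume $(x_n)_n$ is nearly computably Cauchy and let $r:\IN\to\IN$ be a total, strictly increasing, computable function. I would show that the sequence $(d(y_{r(n+1)},y_{r(n)}))_n$ converges computably to $0$ by bounding it via the triangle inequality:
\[
 d(y_{r(n+1)},y_{r(n)}) \leq d(y_{r(n+1)},x_{r(n+1)}) + d(x_{r(n+1)},x_{r(n)}) + d(x_{r(n)},y_{r(n)}) .
\]
Each of the three summands on the right will be shown to converge computably to $0$.

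For the middle term this is immediate: it is the defining property of $(x_n)_n$ being nearly computably Cauchy, applied to the strictly increasing computable function $r$. For the two outer terms, I would use that $r$ is strictly increasing, so $r(n)\geq n$ and $r(n+1)\geq n+1$. If $h:\IN\to\IN$ is a computable modulus of convergence of $(d(x_n,y_n))_n$ to $0$, then for any $m\geq h(k)$ we have $r(m)\geq m\geq h(k)$, so $d(x_{r(m)},y_{r(m)})\leq 2^{-k}$; hence $h$ itself is a computable modulus of convergence to $0$ for $(d(x_{r(n)},y_{r(n)}))_n$, and a slight shift handles $(d(x_{r(n+1)},y_{r(n+1)}))_n$.

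Finally, I would invoke the standard fact that if three nonnegative sequences each converge computably to $0$ with computable moduli $g_1,g_2,g_3$, then their pointwise sum converges computably to $0$ with modulus $g(k):=\max\{g_1(k+2),g_2(k+2),g_3(k+2)\}$, because three terms each at most $2^{-(k+2)}$ sum to at most $3\cdot 2^{-(k+2)}<2^{-k}$. Since the sequence on the right-hand side of the triangle inequality dominates $(d(y_{r(n+1)},y_{r(n)}))_n$ pointwise and converges computably to $0$, the same modulus bounds $(d(y_{r(n+1)},y_{r(n)}))_n$, so this sequence converges computably to $0$ as well. There is no real obstacle here; the only thing to be careful about is that the modulus $h$ for the symmetric hypothesis must be extracted once and then shown to transfer to any strictly increasing computable reindexing $r$, which is exactly what monotonicity of $r$ delivers.
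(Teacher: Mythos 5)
Your proposal is correct and follows essentially the same route as the paper's proof: the symmetry reduction, the three-term triangle inequality bound $d(y_{r(n+1)},y_{r(n)}) \leq d(y_{r(n+1)},x_{r(n+1)}) + d(x_{r(n+1)},x_{r(n)}) + d(x_{r(n)},y_{r(n)})$, the use of $r(m)\geq m$ to transfer the modulus of $(d(x_n,y_n))_n$ to the reindexed outer terms, and the combined modulus $\max$ of the individual moduli shifted by $2$ so that $3\cdot 2^{-(k+2)}<2^{-k}$. No gaps.
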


\begin{proof}
Let us assume that $(x_n)_n$ is nearly computably Cauchy.
By symmetry it is sufficient to show that this implies that $(y_n)_n$ is
nearly computably Cauchy as well.
Let $f:\IN\to\IN$ be a computable modulus of convergence of the sequence $(d(x_n,y_n))_n$.
Let $r:\IN\to\IN$ be a strictly increasing, computable function.
Then the sequence $(d(x_{r(n+1)},x_{r(n)}))_n$ converges computably to $0$.
Let $g:\IN\to\IN$ be a computable modulus of convergence of this sequence.
The function $h:\IN\to\IN$ defined by
$h(n):=\max\{f(n+2),g(n+2)\}$, for $n\in\IN$, is computable. We claim that 
the sequence $(d(y_{r(n+1)},y_{r(n)}))_n$ converges to $0$
and that $h$ is a modulus of convergence of this sequence.
Indeed, for $m,n\in\IN$ with $m\geq h(n)$ we obtain
$r(m+1)\geq m+1 > h(n) \geq f(n+2)$ and $m\geq h(n) \geq g(n+2)$
as well as $r(m)\geq m \geq h(n) \geq f(n+2)$, hence,
\begin{eqnarray*}
d(y_{r(m+1)},y_{r(m)})
 &\leq & d(y_{r(m+1)},x_{r(m+1)}) + d(x_{r(m+1)},x_{r(m)}) + d(x_{r(m)},y_{r(m)}) \\
 &\leq & 2^{-(n+2)} + 2^{-(n+2)} + 2^{-(n+2)} \\
 &<& 2^{-n}.
\end{eqnarray*}
\end{proof}

\begin{lemma}
\label{lemma:nearly-computably-convergent-1}
Let $(X,d)$ be a metric space,
and let $(x_n)_n$ be a sequence in $X$.
Let $s:\IN\to\IN$ be a total, nondecreasing, unbounded computable function,
and define $y_n:=x_{s(n)}$, for all $n$.
If $(x_n)_n$ is nearly computably Cauchy then so is $(y_n)_n$.
\end{lemma}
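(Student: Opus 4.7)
The plan is to reduce everything to Lemma~\ref{lemma:ncc-basic}, which tells us that the ``strictly increasing'' clause in the definition of nearly computably Cauchy can be replaced by ``nondecreasing and unbounded'' without changing the notion.

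First I would fix an arbitrary total, strictly increasing, computable function $r:\IN\to\IN$ and observe that the composition $t := s\circ r:\IN\to\IN$ is total and computable (as a composition of such). Since $r$ is strictly increasing and $s$ is nondecreasing, $t$ is nondecreasing. Moreover, $t$ is unbounded: $r$ tends to $\infty$ because it is strictly increasing, and $s$ tends to $\infty$ by assumption (as it is nondecreasing and unbounded, per the remark following the definition of ``tends to $\infty$''), so $t = s\circ r$ also tends to $\infty$ and is in particular unbounded.

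Next I would invoke Lemma~\ref{lemma:ncc-basic}. Since $(x_n)_n$ is nearly computably Cauchy, it satisfies condition $(\mathrm{I.C})$ and, by the equivalence in that lemma, also condition $(\mathrm{II.C})$: for every total, nondecreasing, unbounded, computable function $t:\IN\to\IN$, the sequence $(d(x_{t(n+1)},x_{t(n)}))_n$ converges computably to $0$. Applying this to the particular $t = s\circ r$ constructed above gives that $(d(x_{s(r(n+1))},x_{s(r(n))}))_n$ converges computably to $0$, which is precisely $(d(y_{r(n+1)},y_{r(n)}))_n$. Since $r$ was arbitrary, $(y_n)_n$ is nearly computably Cauchy.

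There is no real obstacle here; the whole content is already packaged in Lemma~\ref{lemma:ncc-basic}. The only point that deserves a sentence of care is the verification that $s\circ r$ is nondecreasing and unbounded, which is immediate from the hypotheses on $s$ and $r$.
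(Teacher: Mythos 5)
Your proof is correct and follows essentially the same route as the paper: both arguments hinge on observing that $s\circ r$ is a total, nondecreasing, unbounded computable function and then invoking the equivalence in Lemma~\ref{lemma:ncc-basic} so that the nearly computably Cauchy property of $(x_n)_n$ applies to $s\circ r$, yielding computable convergence of $(d(y_{r(n+1)},y_{r(n)}))_n$. The only cosmetic difference is that the paper phrases the verification with $r$ ranging over nondecreasing unbounded functions, while you fix a strictly increasing $r$ as in the definition; the content is identical.
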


\begin{proof}
Let $(x_n)_n$ be nearly computably Cauchy.
If $r:\IN\to\IN$ is a total, nondecreasing, unbounded computable function
then $s \circ r:\IN\to\IN$ is a total, nondecreasing, unbounded, computable function as well.
Hence, the sequence
\[ (d(y_{r(n+1)},y_{r(n)}))_n
  = (d(x_{s(r(n+1))},x_{s(r(n))}))_n
  = (d(x_{(s\circ r)(n+1)},x_{(s \circ r)(n)}))_n \]
converges computably.
\end{proof}

\begin{proposition}
\label{prop:ncc-seq-vectorspace}
The set of all nearly computably Cauchy sequences in a normed vector space
is a sub vector space of the vector space of all sequences in the normed space.
\end{proposition}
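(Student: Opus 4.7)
The plan is to verify the three vector-subspace axioms directly, relying on the characterization from Lemma~\ref{lemma:ncc-basic-2} that in a normed space $(x_n)_n$ is nearly computably Cauchy if and only if condition (I.C) holds, i.e., for every total, strictly increasing, computable $r:\IN\to\IN$, the sequence $(x_{r(n+1)}-x_{r(n)})_n$ converges computably to $0$ in $X$. The zero sequence satisfies this trivially (any constant function is a modulus), so it remains to verify closure under addition and under scalar multiplication.

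For closure under addition, let $(x_n)_n$ and $(y_n)_n$ both be nearly computably Cauchy, and fix a strictly increasing computable $r:\IN\to\IN$. Condition (I.C) yields computable moduli $g_1,g_2:\IN\to\IN$ of convergence to $0$ of the sequences $(x_{r(n+1)}-x_{r(n)})_n$ and $(y_{r(n+1)}-y_{r(n)})_n$, respectively. The triangle inequality gives
\[
\bigl|(x_{r(m+1)}+y_{r(m+1)}) - (x_{r(m)}+y_{r(m)})\bigr|
\leq |x_{r(m+1)}-x_{r(m)}| + |y_{r(m+1)}-y_{r(m)}|,
\]
so the function $h(n):=\max\{g_1(n+1),g_2(n+1)\}$ is a computable modulus of convergence of $((x+y)_{r(n+1)} - (x+y)_{r(n)})_n$ to $0$. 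Hence $(x_n+y_n)_n$ satisfies (I.C).

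For closure under scalar multiplication, let $\lambda$ be a scalar and pick any $N\in\IN$ with $|\lambda|\leq 2^N$; such an $N$ exists even when $\lambda$ is not computable, and it is the only information about $\lambda$ we need. Given a strictly increasing computable $r$ and a computable modulus $g$ for $(x_{r(n+1)}-x_{r(n)})_n$, the estimate
\[
|\lambda x_{r(m+1)} - \lambda x_{r(m)}| = |\lambda|\cdot|x_{r(m+1)}-x_{r(m)}| \leq 2^N\cdot 2^{-(n+N)} = 2^{-n}
\]
holds whenever $m\geq g(n+N)$, so $n\mapsto g(n+N)$ is a computable modulus of convergence for $(\lambda x_{r(n+1)} - \lambda x_{r(n)})_n$, and $(\lambda x_n)_n$ satisfies (I.C).

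I do not anticipate a genuine obstacle; the only point worth flagging is the scalar-multiplication step, where one must notice that the argument goes through for \emph{every} scalar $\lambda$, computable or not, because we only use the existence of a positive integer $N$ with $|\lambda|\leq 2^N$ to shift the modulus uniformly. Once that is observed, the proposition reduces to the standard fact that computable convergence to $0$ in a normed space is preserved by finite sums and by multiplication by a fixed scalar.
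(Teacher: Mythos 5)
Your proof is correct and follows essentially the same route as the paper: for addition, the same triangle-inequality argument with the modulus $h(n)=\max\{g_1(n+1),g_2(n+1)\}$, and for scalar multiplication (which the paper leaves to the reader) the same shift of the modulus by a fixed $N$ with $|\lambda|\leq 2^N$. No gaps; the remark that $\lambda$ need not be computable is exactly the point the paper relies on as well.
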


\begin{proof}
Let $X$ be a normed vector space.
We need to prove the following two assertions:
\begin{enumerate}
\item
If $(x_n)_n$ and $(y_n)_n$ are nearly computably Cauchy sequences
then $(x_n+y_n)_n$ is a nearly computably Cauchy sequence as well.
\item
If $(x_n)_n$ is a nearly computably Cauchy sequence and $\lambda$ is a real number
then $(\lambda \cdot x_n)_n$ is a nearly computably Cauchy sequence.
\end{enumerate}
We prove the first assertion and leave the similar proof of the second assertion to the reader.

Let $(x_n)_n$ and $(y_n)_n$ be nearly computably Cauchy sequences.
Let $r:\IN\to\IN$ be a computable and strictly increasing function.
We claim that the sequence $(x_{r(n+1)} + y_{r(n+1)} - x_{r(n)} - y_{r(n)})_n$ converges computably to $0$.
It is clear that it converges to $0$ because the sequences $(x_{r(n+1)} - x_{r(n)})_n$ and
$(y_{r(n+1)} - y_{r(n)})_n$ converge to $0$ computably.
Let $g_x:\IN\to\IN$ be a computable modulus of convergence of the sequence $(x_{r(n+1)} - x_{r(n)})_n$,
and let $g_y:\IN\to\IN$ be a computable modulus of convergence of the sequence $(y_{r(n+1)} - y_{r(n)})_n$.
Then the function $h:\IN\to\IN$ defined by $h(n):=\max\{g_x(n+1),g_y(n+1)\}$ is computable.
It is a modulus of convergence
of the sequence $(x_{r(n+1)} + y_{r(n+1)} - x_{r(n)} - y_{r(n)})_n$
because, for all $m,n\in\IN$ with $m\geq h(n)$,
\begin{eqnarray*}
|x_{r(m+1)} + y_{r(m+1)} - x_{r(m)} - y_{r(m)}|
 &\leq & |x_{r(m+1)} - x_{r(m)}| + |y_{r(m+1)} - y_{r(m)}| \\
 &\leq & 2^{-(n+1)} + 2^{-(n+1)} \\
 &=& 2^{-n} .
\end{eqnarray*}
We have shown that the sequence $(x_{r(n+1)} + y_{r(n+1)} - x_{r(n)} - y_{r(n)})_n$
converges computably to $0$.
\end{proof}

\begin{proposition}
\label{proposition:ncc-seq-all}
Let $(x_n)_n$ and $(y_n)_n$ be convergent nondecreasing computable sequences
of real numbers with the same limit.
Then $(x_n)_n$ is nearly computably Cauchy if, and only if,
$(y_n)_n$ is nearly computably Cauchy.
\end{proposition}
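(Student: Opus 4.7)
The plan is to reduce the statement to Lemma~\ref{lemma:ncc-pair-close}: I first construct closely matched computable subsequences of $(x_n)_n$ and $(y_n)_n$ to which that lemma applies, and then show that, for a nondecreasing computable sequence, being nearly computably Cauchy transfers freely between the full sequence and any subsequence indexed by a strictly increasing computable function. Once these two ingredients are in place, the claimed equivalence follows by chaining. I abbreviate ``nearly computably Cauchy'' as ``ncc'' below.

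First, I would construct strictly increasing computable functions $f,g\colon\IN\to\IN$ satisfying $|x_{f(k)} - y_{g(k)}|\leq 2^{-k}$ for every $k\in\IN$. Since $(x_n)_n$ and $(y_n)_n$ are computable sequences of real numbers converging to a common limit $\alpha$, uniformly computable rational approximations $p_{n,k}$ of $x_n$ and $q_{n,k}$ of $y_n$ with error at most $2^{-k}$ are available. Proceeding recursively, I dovetail a search for a triple $(i,j,N)$ with $i>f(k-1)$, $j>g(k-1)$, and $|p_{i,N} - q_{j,N}|\leq 2^{-k} - 2\cdot 2^{-N}$; such a triple exists because $x_i,y_j\to\alpha$, and the strict tolerance guarantees $|x_{f(k)} - y_{g(k)}|\leq 2^{-k}$. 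The sequence $(|x_{f(k)} - y_{g(k)}|)_k$ then converges computably to $0$, so Lemma~\ref{lemma:ncc-pair-close} yields that $(x_{f(k)})_k$ is ncc if and only if $(y_{g(k)})_k$ is.

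Next I would prove the following subsequence statement: for any nondecreasing computable sequence $(w_n)_n$ of real numbers and any strictly increasing computable $h\colon\IN\to\IN$, $(w_n)_n$ is ncc if and only if $(w_{h(k)})_k$ is. The forward direction is immediate since $h\circ r$ is strictly increasing and computable whenever $r$ is. For the reverse direction, given strictly increasing computable $r\colon\IN\to\IN$, set $t(k):=\mu m.\,[h(m) > r(k)]$; this $t$ is total (because $h$ is unbounded), computable, nondecreasing, and tends to infinity. Using that $w$ is nondecreasing together with $h(t(k)-1)\leq r(k)<h(t(k))$ for $t(k)\geq 1$, one obtains
\[ 0 \leq w_{r(k+1)} - w_{r(k)} \leq w_{h(t(k+1))} - w_{h(t(k)-1)} = \bigl(w_{h(t(k+1))} - w_{h(t(k))}\bigr) + \bigl(w_{h(t(k))} - w_{h(t(k)-1)}\bigr). \]
The first summand converges computably to $0$ by Lemma~\ref{lemma:ncc-basic} in the form $(\mathrm{II.C})$, invoked with $(w_{h(k)})_k$ and the nondecreasing unbounded computable $t$. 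For the second summand, applying ncc of $(w_{h(k)})_k$ to the identity shows that $(w_{h(m+1)} - w_{h(m)})_m$ converges computably to $0$, and since $t$ is computable and tends to infinity, a computable modulus for $(w_{h(t(k))} - w_{h(t(k)-1)})_k\to 0$ can be extracted by waiting until $t(k)$ exceeds the modulus index for the desired precision.

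Finally, the subsequence statement applied once with $(w_n)_n=(x_n)_n$, $h=f$, and once with $(w_n)_n=(y_n)_n$, $h=g$, chained with the equivalence from Lemma~\ref{lemma:ncc-pair-close}, yields
\[ (x_n)_n \text{ is ncc} \iff (x_{f(k)})_k \text{ is ncc} \iff (y_{g(k)})_k \text{ is ncc} \iff (y_n)_n \text{ is ncc}, \]
as desired. I expect the main obstacle to be the reverse direction of the subsequence statement, where the task is to convert the computable convergence of $(w_{h(m+1)} - w_{h(m)})_m\to 0$ into a uniform computable modulus for the reindexed sequence $(w_{h(t(k))} - w_{h(t(k)-1)})_k\to 0$; this requires careful coupling of moduli through the computable function $t$.
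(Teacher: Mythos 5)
Your proof is correct, but it is organized quite differently from the paper's. The paper first replaces $(x_n)_n$ and $(y_n)_n$ by strictly increasing computable \emph{rational} sequences $(a_n)_n,(b_n)_n$ lying within a computably vanishing distance (so Lemma~\ref{lemma:ncc-pair-close} transfers the property), and then, for a given strictly increasing computable $r$, sets $c_n:=b_{r(n)}$ and builds two strictly increasing computable functions $s,t$ so that $(a_{s(n)})_n$ and $(c_{t(n)})_n$ overtake each other in turns, $a_{s(n)}<c_{t(n)}<a_{s(n+1)}$; monotonicity then dominates $c_{t(n+1)}-c_{t(n)}$ by $a_{s(n+2)}-a_{s(n)}$, and Lemma~\ref{lemma:compconv-h1} converts computable convergence along $t$ into computable convergence of $(c_{n+1}-c_n)_n$. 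You instead match the two given sequences directly by a computable search producing $|x_{f(k)}-y_{g(k)}|\leq 2^{-k}$ (so Lemma~\ref{lemma:ncc-pair-close} applies to the subsequences), and you prove as a separate step that for a nondecreasing sequence the nearly computably Cauchy property transfers between the full sequence and any subsequence along a strictly increasing computable $h$ --- which is essentially Corollary~\ref{corollary:nearly-computably-convergent-2}, obtained in the paper only as a \emph{consequence} of the proposition, and which you establish directly (and in fact without using computability of the sequence) via the sandwich $w_{h(t(k)-1)}\leq w_{r(k)}\leq w_{r(k+1)}\leq w_{h(t(k+1))}$, Lemma~\ref{lemma:ncc-basic} in form $\mathrm{(II.C)}$, and a modulus reindexing through $t$. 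Your route avoids the reduction to rational sequences and the overtaking construction, and yields the subsequence corollary en route; the paper's route avoids having to prove that corollary first. Two small points you should make explicit: $t(k)=\mu m\,[h(m)>r(k)]$ can be $0$ for finitely many $k$ (since $t$ is computable, nondecreasing and unbounded, one computably finds the first $k$ with $t(k)\geq 1$ and only claims the estimate from there on), and the extraction of a modulus for the second summand uses precisely that $t$ is nondecreasing, so that $t(k)\geq G(n)+1$ persists for all larger $k$; both are routine to fix.
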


\begin{proof}
By symmetry it is sufficient to show one implication.
So, let us assume that $(x_n)_n$ is nearly computably Cauchy.
We wish to show that $(y_n)_n$ is nearly computably Cauchy.
First we show that we can switch from $(x_n)_n$ and $(y_n)_n$
to strictly increasing computable sequences of rational numbers.
There exists a computable sequence $(q_n)_n$ of rational numbers such that, for all $k,n\in\IN$,
$|x_n - q_{\langle n,k\rangle}| \leq 2^{-k}$.
We define a sequence $(a_n)_n$ of rational numbers by
$a_n:=q_{\langle n,n+3\rangle} - 5 \cdot 2^{-(n+3)}$, for $n\in\IN$.
It is clear that $(a_n)_n$ is a computable sequence of rational numbers with
$0 < 2^{-(n+1)} \leq x_n - a_n \leq 3\cdot 2^{-(n+2)}$, for all $n$. It is strictly increasing because
\begin{eqnarray*}
 a_n &=& q_{\langle n,n+3\rangle} - 5 \cdot 2^{-(n+3)} \\
   &\leq & x_n + 2^{-(n+3)} - 5 \cdot 2^{-(n+3)} \\
   &=& x_n - 2^{-(n+1)} \\
   &\leq & x_{n+1} - 2^{-(n+1)} \\
   &<& x_{n+1} - 2^{-(n+4)} - 5 \cdot 2^{-(n+4)} \\
   &\leq & q_{\langle n+1,n+4\rangle} - 5 \cdot 2^{-(n+4)} \\
   &=& a_{n+1} ,
\end{eqnarray*}
for all $n$.
In the same way we can conclude that there exists a computable strictly increasing
sequence $(b_n)_n$ of rational numbers with
$0 < 2^{-(n+1)} \leq y_n - b_n \leq 3\cdot 2^{-(n+2)}$, for all $n$. 

According to Lemma~\ref{lemma:ncc-pair-close} the sequence $(a_n)_n$ is nearly computably Cauchy.
The main step is to show that this implies that $(b_n)_n$ is nearly computably Cauchy as well.
Once that is done, by Lemma~\ref{lemma:ncc-pair-close}
the sequence $(y_n)_n$ is nearly computably Cauchy as well.

So, we consider two strictly increasing computable sequences $(a_n)_n$ and $(b_n)_n$ of rational
numbers converging to the same limit such that $(a_n)_n$ is nearly computably Cauchy,
and we wish to show that $(b_n)_n$ is nearly computably Cauchy as well.
Let $r:\IN\to\IN$ be an arbitrary strictly increasing computable function.
It is sufficient to show that the sequence
$(b_{r(n+1)} - b_{r(n)})_n$ converges computably to $0$.
The sequence $(c_n)_n$ defined by $c_n:=b_{r(n)}$, for $n\in\IN$, is computable, strictly increasing
and converges to the same number as $(a_n)_n$.
It is sufficient to show that the sequence $(c_{n+1}-c_n)_n$ converges computably to $0$.
In order to show this we define two strictly increasing computable functions $s,t:\IN\to\IN$ so that the
sequences $(a_{s(n)})_n$ and $(c_{t(n)})_n$ overtake each other in turns.
Let $s,t:\IN\to\IN$ be defined by $s(0):=0$, by $t(0) := \min\{k\in\IN ~:~ c_k>a_0\}$, and, for $n\in\IN$, by
\begin{eqnarray*}
 s(n+1) &:=& \min\{k \in\IN ~:~ k>s(n) \text{ and } a_k> c_{t(n)}\}, \\
 t(n+1) &:=& \min\{k \in\IN ~:~ k>t(n) \text{ and } c_k> a_{s(n+1)}\}. 
\end{eqnarray*}
It is clear that both functions $s$ and $t$ are strictly increasing and computable and satisfy
\[ a_{s(n)} < c_{t(n)} < a_{s(n+1)} , \]
for all $n\in\IN$.
As $(a_n)_n$ is nearly computably Cauchy the sequence
$(a_{s(n+1)}-a_{s(n)})_n$ converges computably to $0$.
Let $g:\IN\to\IN$ be a modulus of convergence of this sequence.
We claim that the computable function $h:\IN\to\IN$ defined by
$h(n):=g(n+1)$, for $n\in\IN$, is a modulus of convergence of the sequence
$(c_{t(n+1)}-c_{t(n)})_n$ and that this sequence converges to $0$.
Indeed, for $m,n\in\IN$ with $m\geq h(n)$ we obtain
\begin{eqnarray*}
|c_{t(m+1)}-c_{t(m)}|
&=& c_{t(m+1)}-c_{t(m)} \\
&< & a_{s(m+2)} - a_{s(m)} \\
&=& a_{s(m+2)} - a_{s(m+1)} + a_{s(m+1)} - a_{s(m)} \\
&\leq & 2^{-(n+1)} + 2^{-(n+1)} \\
&=& 2^{-n} .
\end{eqnarray*}
We have shown that the sequence $(c_{t(n+1)}-c_{t(n)})_n$ converges computably to $0$.
By Lemma~\ref{lemma:compconv-h1} the sequence $(c_{n+1} - c_n)_n$ converges computably to $0$ as well.
\end{proof}

The following corollary of Proposition~\ref{proposition:ncc-seq-all}
shows that the converse of Lemma~\ref{lemma:nearly-computably-convergent-1}
is true for nondecreasing computable sequences of real numbers.

\begin{corollary}
\label{corollary:nearly-computably-convergent-2}
Let $(x_n)_n$ be a nondecreasing computable sequence of real numbers.
Let $t:\IN\to\IN$ be a total, nondecreasing, unbounded computable function, and define
$y_n:=x_{t(n)}$.
Then $(x_n)_n$ is nearly computably Cauchy if, and only if,
$(y_n)_n$ is nearly computably Cauchy.
\end{corollary}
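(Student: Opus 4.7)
The forward direction (if $(x_n)_n$ is nearly computably Cauchy, then so is $(y_n)_n$) is exactly Lemma~\ref{lemma:nearly-computably-convergent-1} applied with $s=t$, so there is nothing to do.

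For the converse, assume $(y_n)_n$ is nearly computably Cauchy. The plan is to use Proposition~\ref{proposition:ncc-seq-all}, which requires that both sequences be nondecreasing, computable, and convergent to the same limit. Monotonicity and computability of $(y_n)_n$ are inherited from $(x_n)_n$ and $t$: since $t:\IN\to\IN$ is computable and $(x_n)_n$ is a computable sequence of real numbers, so is $(y_n)_n=(x_{t(n)})_n$; and since $t$ is nondecreasing and $(x_n)_n$ is nondecreasing, so is $(y_n)_n$.

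The key intermediate step is to show convergence of both sequences (to a common limit). First, $(y_n)_n$ is a computable, nearly computably Cauchy sequence in the computable metric space $\IR$, so by the unnamed proposition immediately preceding Theorem~\ref{theorem:increasing-ncc} it is Cauchy, hence converges to some $\alpha\in\IR$. Because $(y_n)_n$ is nondecreasing we have $y_n\leq\alpha$ for every $n$. Now I use that $t$ is unbounded: given any $k\in\IN$, choose $n$ with $t(n)\geq k$; then $x_k\leq x_{t(n)}=y_n\leq\alpha$. Hence $(x_n)_n$ is a nondecreasing sequence bounded above by $\alpha$, so it converges to some $\beta\leq\alpha$. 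Conversely $y_n=x_{t(n)}\leq\beta$ for every $n$, so $\alpha\leq\beta$, and therefore $\alpha=\beta$.

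With convergence and monotonicity of both $(x_n)_n$ and $(y_n)_n$ established, together with the common limit $\alpha$ and their computability, Proposition~\ref{proposition:ncc-seq-all} applies and yields that $(x_n)_n$ is nearly computably Cauchy iff $(y_n)_n$ is; since we assumed the latter, we conclude the former. The only nontrivial point is really the convergence of $(x_n)_n$, which hinges on combining the proposition about computable nearly computably Cauchy sequences (to get convergence of $(y_n)_n$) with the unboundedness of $t$ (to promote that to convergence of $(x_n)_n$); everything else is direct bookkeeping.
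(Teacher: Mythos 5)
Your proposal is correct and follows exactly the route the paper intends: the forward direction is Lemma~\ref{lemma:nearly-computably-convergent-1} with $s=t$, and the converse is Proposition~\ref{proposition:ncc-seq-all}, where you correctly supply the detail the paper leaves implicit, namely that $(y_n)_n$ converges (being computable and nearly computably Cauchy, by the proposition preceding Theorem~\ref{theorem:increasing-ncc}) and that the unboundedness of $t$ then forces $(x_n)_n$ to converge to the same limit. Nothing to add.
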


\section{The Field of Nearly Computable Real Numbers}
\label{section:nearly-computable-field}

In this section we consider the set of nearly computable real numbers and their closure properties.
Actually, ``nearly computable'' elements can be defined in any computable metric space.

\begin{definition}
Let $(X,d,\alpha)$ be a computable metric space.
We call an element $y\in X$ {\em nearly computable} if there exists a computable sequence
$x:\IN\to X$ in $X$ that is nearly computably Cauchy and converges to $y$.
\end{definition}

In this definition, $\alpha$ can be replaced by any
equivalent (see the end of Section~\ref{section:comp-sequences})
sequence $\beta$ in $X$ without changing the set of nearly computable elements of $X$.

\begin{proposition}
\label{prop:comp-nearly-comp}
Let $(X,d,\alpha)$ be a computable metric space.
Every computable element of $X$ is nearly computable.
\end{proposition}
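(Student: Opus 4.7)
The plan is to chain together three results already established in the paper. Given a computable element $y \in X$, I first want to produce a computable sequence in $X$ that converges to $y$ and is nearly computably Cauchy.

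First, I would invoke Lemma~\ref{lemma:comp-metric-space-comp-elem}: since $y$ is computable, there is a computable function $f:\IN\to\IN$ such that the sequence $(\alpha(f(n)))_n$ converges computably to $y$. Setting $x_n := \alpha(f(n))$ gives a computable sequence in $X$ (it is obtained by composing the computable function $f$ with the dense sequence $\alpha$ of the computable metric space) that converges computably to $y$.

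Next, by Corollary~\ref{cor:conv-comp-Cauchy}.\ref{cor:conv-comp-Cauchy-1}, any computably convergent sequence in a metric space is computably Cauchy; so $(x_n)_n$ is computably Cauchy. Then Lemma~\ref{lemma:cChauchy-ncc-1} yields that $(x_n)_n$ is nearly computably Cauchy. Combined with the fact that $(x_n)_n$ converges to $y$ and is computable, this exhibits $y$ as a nearly computable element of $X$ by definition.

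There is essentially no obstacle here: the proposition is a direct corollary of the chain Lemma~\ref{lemma:comp-metric-space-comp-elem} $\Rightarrow$ Corollary~\ref{cor:conv-comp-Cauchy}.\ref{cor:conv-comp-Cauchy-1} $\Rightarrow$ Lemma~\ref{lemma:cChauchy-ncc-1}. The only minor thing to verify is that $(\alpha(f(n)))_n$ is indeed a computable sequence in the sense of the definition of a computable sequence in a computable metric space, but this is immediate since $f$ is total computable and $\alpha$ is the fixed dense sequence used to define computability in $(X,d,\alpha)$.
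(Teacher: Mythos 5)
Your proof is correct and follows exactly the same route as the paper, which proves the proposition by citing the same chain: Lemma~\ref{lemma:comp-metric-space-comp-elem}, Corollary~\ref{cor:conv-comp-Cauchy}.\ref{cor:conv-comp-Cauchy-1}, and Lemma~\ref{lemma:cChauchy-ncc-1}. Your additional remark that $(\alpha(f(n)))_n$ is a computable sequence is a reasonable bit of bookkeeping that the paper leaves implicit.
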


\begin{proof}
This follows from 
Lemma~\ref{lemma:comp-metric-space-comp-elem},
Corollary~\ref{cor:conv-comp-Cauchy}.\ref{cor:conv-comp-Cauchy-1},
and Lemma~\ref{lemma:cChauchy-ncc-1}.
\end{proof}

By applying this definition to the computable metric space $(\IR,d,\nu_\IQ)$ where $d$ is the usual Euclidean
distance, we obtain the notion of a {\em nearly computable real number}.
Let $\NC$ be the set of all nearly computable real numbers.

We say that a subset $S\subseteq \IR$ {\em is closed under} a function $f:\subseteq\IR^k\to\IR$
if for any vector $x\in\dom(f)\cap S^k$ the real number $f(x)$ is an element of $S$.
For $k\geq 1$ and for $i,j\in\IN$ we write
\[ B^{(k)}_{\langle i,j\rangle} := \left\{x \in\IR^k ~:~ d(x,\nu_\IQ^{(k)}(i)) < 2^{-j}\right\} . \]
The following notion of a computable function $f:\subseteq \IR^{k}\to\IR$
can be characterized in many different ways; compare~\cite{Wei00}.

\begin{definition}
\label{definition:computable-function}
A function $f:\subseteq \IR^{k}\to\IR$ is called {\em computable}
if there exists a computably enumerable subset $A \subseteq\IN$ such that
\begin{enumerate}
\item
for all $m,n\in\IN$ with $\langle m,n\rangle \in A$, $f( B^{(k)}_m)\subseteq B^{(1)}_n$,
\item
for all $x\in\dom(f)$ and all $\varepsilon>0$ there exist $m,n\in\IN$
with $x \in B^{(k)}_m$, $\langle m,n\rangle \in A$, and $2^{-\pi_2(n)}<\varepsilon$.
\end{enumerate}
\end{definition}

The following characterization of computable functions on a compact rectangle with rational vertices
is a minor straightforward generalization of a result by Grzegorczyk~\cite{Grz57}. We omit the proof.

\begin{lemma}
\label{lemma:Grze}
Let $k\geq 1$. Let $a_1,\ldots,a_k,b_1,\ldots,b_k$ be rational numbers with $a_i< b_i$, for $i=1,\ldots,k$.
Then a function $f:[a_1,b_1]\times \ldots \times [a_k,b_k]\to\IR$ is computable if, and only if, 
\begin{enumerate}
\item
for any computable sequence $(q_n)_n$ of rational points in $(a_1,b_1)\times \ldots \times (a_k,b_k)$ 
(that is, for any sequence $(q_n)_n \in \IQ^k \cap (a_1,b_1)\times \ldots \times (a_k,b_k)$ 
such that there is a computable function $f:\IN\to\IN$ with $q_n=\nu_\IQ^{(k)}(f(n))$, for all $n$)
the sequence $(f(q_n))_n$ is a computable sequence of real numbers,
\item
there exists a computable function $g:\IN\to\IN$ such that,
for all $n\in\IN$ and all $x,y \in [a_1,b_1]\times \ldots \times [a_k,b_k]$,
if $d(x,y)\leq 2^{-g(n)}$ then $|f(x) - f(y)| \leq 2^{-n}$
(such a function $g$ will be called a {\em computable modulus of continuity of $f$}).
\end{enumerate}
\end{lemma}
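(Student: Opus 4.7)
The plan is to prove both implications using standard techniques. For the ``only if'' direction, assume $f$ is computable with witness c.e.\ set $A$ as in Definition~\ref{definition:computable-function}. To verify~(1), given a computable sequence $(q_n)_n$ of rational points in the open rectangle, I would compute, for each pair $(n,\ell)$, a rational approximation of $f(q_n)$ within $2^{-\ell}$ by searching $A$ for some $\langle m,j\rangle$ with $q_n\in B^{(k)}_m$ (decidable, since $q_n$ and the center of $B^{(k)}_m$ are both rational) and $2^{-\pi_2(j)}<2^{-\ell}$; existence follows from clause~(ii) of Definition~\ref{definition:computable-function}. Then $f(q_n)\in B^{(1)}_j$, so its rational center approximates $f(q_n)$ within $2^{-\ell}$.

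To verify~(2), fix $n$ and set $R:=[a_1,b_1]\times\cdots\times[a_k,b_k]$. For every $x\in R$, clause~(ii) of Definition~\ref{definition:computable-function} with $\varepsilon:=2^{-(n+2)}$ yields some $\langle m,j\rangle\in A$ with $x\in B^{(k)}_m$ and $2^{-\pi_2(j)}<2^{-(n+2)}$; the corresponding balls $B^{(k)}_m$ form an open cover of the compact set $R$. I would enumerate $A$ in stages and, at each stage $s$, effectively test whether those $B^{(k)}_m$ already enumerated with $2^{-\pi_2(j)}<2^{-(n+2)}$ cover $R$ (decidable because $R$ and the balls are rational). Compactness guarantees success at some stage; from the resulting finite rational cover I would effectively compute a Lebesgue number $2^{-g(n)}$. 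Then any $x,y\in R$ with $d(x,y)\leq 2^{-g(n)}$ lie in a common $B^{(k)}_m$ of the cover, forcing $f(x),f(y)\in B^{(1)}_j$ with $2^{-\pi_2(j)}<2^{-(n+2)}$, hence $|f(x)-f(y)|<2^{-(n+1)}\leq 2^{-n}$.

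For the ``if'' direction, assume~(1) and~(2) with computable modulus $g$. Fix a computable enumeration $(r_m)_m$ of all rational points of the open rectangle. By~(1), effectively from $(m,n)$ one obtains a rational $s_{m,n}$ with $|f(r_m)-s_{m,n}|\leq 2^{-(n+1)}$. Enumerate into $A$ the pair $\langle l_{m,n},j_{m,n}\rangle$, where $B^{(k)}_{l_{m,n}}$ is the open ball of radius $2^{-g(n+1)}$ around $r_m$ and $B^{(1)}_{j_{m,n}}$ is the open ball of radius $2^{-n}$ around $s_{m,n}$. By~(2), every $x\in B^{(k)}_{l_{m,n}}\cap R$ satisfies $|f(x)-f(r_m)|\leq 2^{-(n+1)}$, so $|f(x)-s_{m,n}|<2^{-n}$, giving clause~(i) of Definition~\ref{definition:computable-function}. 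For clause~(ii), given $x\in R$ and $\varepsilon>0$, pick $n$ with $2^{-n}<\varepsilon$ and some $r_m$ with $d(x,r_m)<2^{-g(n+1)}$; then $\langle l_{m,n},j_{m,n}\rangle$ is as required.

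The main obstacle is clause~(2) of the ``only if'' direction: the effective extraction of a finite subcover from the c.e.\ enumeration of $A$ together with the effective computation of a Lebesgue number for a finite family of rational open balls covering a rational rectangle. Both steps rely essentially on compactness of $R$ (which is why computability on an unbounded domain need not yield a global computable modulus of continuity). Everything else is straightforward bookkeeping.
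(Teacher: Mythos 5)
The paper itself does not prove this lemma---it is stated as a minor generalization of Grzegorczyk's theorem and the proof is explicitly omitted---so there is no official argument to compare with; judged on its own terms, your proof follows the standard route and is essentially correct, but two details need repair or justification. (a) In the ``if'' direction your error budget is too tight: from $|f(x)-f(r_m)|\leq 2^{-(n+1)}$ and $|f(r_m)-s_{m,n}|\leq 2^{-(n+1)}$ you only get $|f(x)-s_{m,n}|\leq 2^{-n}$, whereas clause (1) of Definition~\ref{definition:computable-function} requires $f(x)$ to lie in the \emph{open} ball $B^{(1)}_{j_{m,n}}$ of radius $2^{-n}$; approximating $f(r_m)$ to within $2^{-(n+2)}$ and using input radius $2^{-g(n+2)}$ removes the problem. (b) In the ``only if'' direction you assert, without argument, that one can decide whether finitely many rational open balls cover the rational box $R$, and that a Lebesgue number of a finite rational cover can be computed effectively. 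What is true, and suffices, is semi-decidability, which you can dovetail with the enumeration of $A$: a closed rational box is contained in an open rational Euclidean ball if, and only if, all of its vertices are (the squared distance to the centre is convex, hence maximized at a vertex, and the comparison is between rational numbers), so you may search for a fineness $t$ such that every subcube of $R$ of side roughly $2^{-t}$, enlarged by $2^{-t}$ and intersected with $R$, is contained in a single ball $B^{(k)}_m$ already enumerated with associated output radius below $2^{-(n+2)}$, and then put $g(n):=t$; classical compactness (the existence of a true Lebesgue number for the cover of $R$ extracted from clause (2) of Definition~\ref{definition:computable-function}) guarantees that this search terminates, and uniformity in $n$ makes $g$ computable. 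This single search replaces your two-step ``extract a finite subcover, then compute a Lebesgue number'' and makes the effectivity explicit; with these adjustments the proof is complete.
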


\begin{proposition}
\label{proposition:nc-closed-under-functions}
The set $\NC$ is closed under computable functions $f:\subseteq\IR^k\to\IR$
with open domain $\dom(f)\subseteq\IR^k$, for arbitrary $k>0$.
\end{proposition}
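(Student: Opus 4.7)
The plan is to take nearly computable reals $x_1,\ldots,x_k \in \dom(f)$, witness each one by a computable sequence $(a^{(j)}_n)_n$ of rationals that converges to $x_j$ and is nearly computably Cauchy, and then build a computable sequence $(y_n)_n$ converging to $f(x_1,\ldots,x_k)$ that is nearly computably Cauchy. Since $\dom(f)$ is open and contains $(x_1,\ldots,x_k)$, there exist rationals $c_j < x_j < d_j$ (not necessarily computable from the data) with $R:=[c_1,d_1]\times\cdots\times[c_k,d_k]\subseteq\dom(f)$. Fix such rationals and define the clipped sequences $z^{(j)}_n := \min(\max(a^{(j)}_n,c_j),d_j)$; these are computable rational sequences lying in $[c_j,d_j]$, and since $a^{(j)}_n\to x_j \in(c_j,d_j)$, eventually $z^{(j)}_n=a^{(j)}_n$. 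Set $y_n := f(z^{(1)}_n,\ldots,z^{(k)}_n)$, which is defined for all $n$ and is a computable sequence of reals because $f$ is computable on $R$ applied to a computable sequence of rational points of $R$. Continuity of $f$ at $(x_1,\ldots,x_k)$ yields $y_n\to f(x_1,\ldots,x_k)$.

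It remains to verify that $(y_n)_n$ is nearly computably Cauchy. Let $r:\IN\to\IN$ be strictly increasing and computable. Because each $(a^{(j)}_n)_n$ is nearly computably Cauchy, each sequence $(a^{(j)}_{r(n+1)}-a^{(j)}_{r(n)})_n$ converges computably to $0$, and since clipping to an interval is $1$-Lipschitz, $|z^{(j)}_{r(n+1)}-z^{(j)}_{r(n)}| \leq |a^{(j)}_{r(n+1)}-a^{(j)}_{r(n)}|$, so the same computable modulus of convergence works for the clipped differences. Writing $Z_n := (z^{(1)}_n,\ldots,z^{(k)}_n)$, the triangle inequality gives
\[
 d(Z_{r(n+1)},Z_{r(n)}) \leq \sum_{j=1}^k |z^{(j)}_{r(n+1)}-z^{(j)}_{r(n)}|,
\]
and taking the maximum of the individual (shifted) moduli produces a computable modulus showing that $(d(Z_{r(n+1)},Z_{r(n)}))_n$ converges computably to $0$. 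By Lemma~\ref{lemma:Grze} applied to the restriction $f\!\!\upharpoonright R$, there is a computable modulus of continuity $g:\IN\to\IN$ of $f$ on $R$. Composing the two moduli then yields a computable function $h$ with $|y_{r(m+1)}-y_{r(m)}|\leq 2^{-n}$ for all $m\geq h(n)$, so $(y_{r(n+1)}-y_{r(n)})_n$ converges computably to $0$.

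I would expect the main obstacle to be handling the openness of $\dom(f)$ noncomputably: the rationals $c_j,d_j$ enclosing $(x_1,\ldots,x_k)$ inside $\dom(f)$ exist but cannot be found effectively from the $(a^{(j)}_n)_n$. The clipping trick circumvents this by making $(y_n)_n$ totally defined and computable using any fixed (noneffective but existing) choice of such rationals, while the eventual coincidence $z^{(j)}_n=a^{(j)}_n$ ensures convergence to the correct limit. Once this is set up, the remainder is a uniform modulus-of-continuity calculation combined with the trivial observation that clipping cannot increase distances, so the nearly computably Cauchy property of each $(a^{(j)}_n)_n$ transfers through $f$ to $(y_n)_n$.
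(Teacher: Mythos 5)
Your proposal is correct and follows essentially the same route as the paper: restrict $f$ to a closed rational box inside the open set $\dom(f)$, take a computable modulus of continuity of this restriction from Lemma~\ref{lemma:Grze}, and compose it with the computable moduli supplied by the nearly computably Cauchy witnesses to see that $(f(Z_{r(n+1)})-f(Z_{r(n)}))_n$ converges computably to $0$. The remaining differences are cosmetic: the paper forces its approximating sequence into the box by discarding a finite prefix where you clip coordinatewise (your $1$-Lipschitz clipping argument is fine, though to apply Lemma~\ref{lemma:Grze} literally one should clip into a box slightly smaller than the one on which the lemma is invoked, since its first condition speaks of points in the open rectangle), you combine the $k$ coordinate witnesses explicitly where the paper takes a single vector-valued witness for granted, and if one insists on a rational witnessing sequence one finishes, as the paper does, by replacing $(f(Z_n))_n$ with a computable rational sequence within $2^{-n}$ and citing Lemma~\ref{lemma:ncc-pair-close}.
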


\begin{proof}
Let $x_1,\ldots,x_k$ be nearly computable numbers, and let $f:\subseteq\IR^k\to\IR$
be a computable function with open domain and with $(x_1,\ldots,x_k)\in\dom(f)$.
In the following we simply write $x$ for the vector $(x_1,\ldots,x_k)$.
We wish to show that $f(x)$ is a nearly computable number.
Let $a_1,\ldots,a_k,b_1,\ldots,b_k$ be rational numbers with $a_i< x_i< b_i$, for $i=1,\ldots,k$,
and with $[a_1,b_1]\times \ldots \times [a_k,b_k] \subseteq \dom(f)$.
The restriction of $f$ to the cube $[a_1,b_1]\times \ldots \times [a_k,b_k]$ is computable as well.
Let $g:\IN\to\IN$ be a computable modulus of continuity of this restriction; compare Lemma~\ref{lemma:Grze}.
Let $(q_n)_n$ be a computable sequence of rational points that converges to $x$
and is nearly computably Cauchy.
We can assume that $q_n\in (a_1,b_1)\times \ldots \times (a_k,b_k)$, for all $n$.
If this should not be the case then we can achieve it by cutting off a finite prefix
of the sequence $(q_n)_n$ so that the resulting sequence satisfies this condition.
The resulting sequence will still be computable, will still converge to $x$
and will still be nearly computably Cauchy.
By Lemma~\ref{lemma:Grze} the sequence $(f(q_n))_n$ is a computable sequence of real numbers
that converges to $f(x)$. 
We claim that it is nearly computably Cauchy.
Let $r:\IN\to\IN$ be a strictly increasing, computable function.
Let $h:\IN\to\IN$ be a computable modulus of convergence of the sequence $d(q_{r(n+1)},q_{r(n)})$.
We claim that the computable function $h\circ g$ is a modulus of convergence of the sequence
$|f(q_{r(n+1)}) - f(q_{r(n)})|$. Indeed, for $m,n\in\IN$ with $m\geq h(g(n))$ we obtain
$d(q_{r(m+1)},q_{r(m)}) \leq 2^{-g(n)}$, hence
$|f(q_{r(m+1)}) - f(q_{r(m)})| \leq 2^{-n}$.
Thus, the sequence $(f(q_n))_n$ is a computable sequence of real numbers
that converges to $f(x)$ and is nearly computably Cauchy.
Finally, let $(p_n)_n$ be a computable sequence of rational numbers with 
$|f(q_n)-p_n|\leq 2^{-n}$, for all $n$.
Then this sequence converges to $f(x)$ as well. By Lemma~\ref{lemma:ncc-pair-close}
it is nearly computably Cauchy as well.
This shows that $f(x)$ is a nearly computable number.
\end{proof}

A subset $S\subseteq\IR$ is called a {\em subfield} of $\IR$ if $0,1\in S$ and if $S$ is closed under the
three total functions addition $+:\IR^2\to\IR$, additive inverse $-:\IR\to\IR$,
and multiplication $\cdot:\IR^2\to\IR$
and under the partial function multiplicative inverse $1/\cdot:\IR\setminus\{0\}\to\IR$.
A subset $S\subseteq\IR$ is called a {\em real closed field} if it is a subfield of $\IR$ and additionally
for every $k>0$ and for every nonzero vector of real numbers $b_0,\ldots,b_k \in S$
(``nonzero vector'' means that at least one of the numbers $b_0,\ldots,b_k$ must be different from zero)
every real number $x$ with $b_k \cdot x^k + \ldots + b_1 \cdot x + b_0=0$ is an element of $S$,
that is, any real zero of any nonzero polynomial with coefficients in $S$ is again an element of $S$.

Let $(X,d_X)$ and $(Y,d_Y)$ be metric spaces. A function $f:X\to Y$ is called {\em Lipschitz continuous}
if there exists a constant $c>0$ such that, for all $x,x'\in X$,
\[ d_Y( f(x), f(x') ) \leq c \cdot d_X(x,x') . \]
Such a constant $c$ is then called a {\em Lipschitz constant} of $f$.
It is well known that all norms on $\IR^k$ are equivalent.
Therefore, a function $f:\subseteq\IR^k\to\IR$ is Lipschitz continuous
with respect to the distance function induced by one norm if, and only if,
it is Lipschitz continuous with respect to the distance function induced by any other norm.
The following lemma is a slight strengthening of an observation
implicitly contained in the article~\cite{Rai05b} by Raichev.

\begin{lemma}
\label{lemma:real-closed-field}
If a subset $K\subseteq\IR$ contains a number $x_0\neq 0$ and is closed
under Lipschitz continuous computable functions
$f:\subseteq\IR^k\to\IR$ with open domain $\dom(f)\subseteq\IR^k$, $k$ arbitrary, then
$K$ is a real closed subfield of $\IR$.
\end{lemma}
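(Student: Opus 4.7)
The plan is to verify, in order, the five closure properties that together define a real closed subfield of $\IR$. Throughout, Lipschitz continuity is taken with respect to the Euclidean metric.

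First, every rational $q$ lies in $K$: the constant map $\IR\to\IR$, $x\mapsto q$, is Lipschitz (with any positive constant), computable, and has open domain $\IR$, so evaluating it at $x_0$ yields $q\in K$. In particular $0,1\in K$. Addition and subtraction are globally Lipschitz and computable on the open set $\IR^2$, giving closure under $\pm$. Multiplication and inversion require localization since neither is globally Lipschitz. Given $x,y\in K$, pick $N\in\IN$ with $|x|,|y|<N$; the restriction of $(a,b)\mapsto ab$ to the open square $(-N,N)^2$ is computable and Lipschitz (with constant $2N$), so $xy\in K$. Given $x\in K\setminus\{0\}$, pick $N$ with $|x|>2^{-N}$; the map $a\mapsto 1/a$ on the open set $\{a\in\IR:|a|>2^{-N}\}$ is computable and Lipschitz (with constant $2^{2N}$), so $1/x\in K$.

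The main step is closure under real roots of nonzero polynomials. Let $p(X)=b_0+b_1X+\cdots+b_kX^k$ with $b_0,\ldots,b_k\in K$ be nonzero and let $\alpha\in\IR$ satisfy $p(\alpha)=0$. Choose the smallest $m\geq 1$ with $p^{(m)}(\alpha)\neq 0$; this exists because $p$ is a nonzero polynomial. The coefficients of $p^{(m-1)}$ are positive integer multiples of certain $b_i$, hence lie in $K$ by the ring closure just proved, and $\alpha$ is a \emph{simple} root of $p^{(m-1)}$. So we may assume $p'(\alpha)\neq 0$ from the outset. Apply the implicit function theorem to $\Phi(X,c_0,\ldots,c_k):=\sum_{i=0}^k c_iX^i$ at the point $(\alpha,b_0,\ldots,b_k)$: one obtains a bounded open neighborhood $U\subseteq\IR^{k+1}$ of $(b_0,\ldots,b_k)$, an open interval $J\ni\alpha$ on which $\Phi(\cdot,c)$ is strictly monotone for every $c\in U$ and has opposite signs at the endpoints, and a real-analytic map $F:U\to J$ with $F(b_0,\ldots,b_k)=\alpha$ and $\Phi(F(c),c)=0$ for every $c\in U$. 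After possibly shrinking $U$ to a bounded sub-neighborhood, all partial derivatives of $F$ are bounded, so $F$ is Lipschitz continuous on $U$.

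To see that $F$ is computable in the sense of Definition~\ref{definition:computable-function}, pick a closed rectangle with rational vertices inside $U$ whose interior contains $(b_0,\ldots,b_k)$. On a rational input $c$ the value $F(c)$ is the unique zero in $J$ of the rational-coefficient polynomial $\Phi(\cdot,c)$, which is uniformly computable by interval bisection in exact rational arithmetic; the Lipschitz constant supplies a computable modulus of continuity. By Lemma~\ref{lemma:Grze} the restriction of $F$ to the closed rational rectangle is computable, and hence $F$ is computable on the open interior of that rectangle. Applying the resulting Lipschitz, computable, open-domain function to the tuple $(b_0,\ldots,b_k)\in K^{k+1}$ yields $\alpha\in K$. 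The main obstacle is precisely this last step: organizing the reduction to a simple root together with the implicit function theorem so that the resulting local root-selection function simultaneously satisfies all three hypotheses (open domain, Lipschitz, computable) demanded by the closure assumption on $K$.
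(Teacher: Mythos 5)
Your proof is correct, and it follows the paper's overall architecture (field operations via Lipschitz computable restrictions to open sets; reduction of a multiple root to a simple root by passing to a derivative; a local implicit/root function that is Lipschitz because it is $C^1$), but it differs genuinely in the key computability step. The paper invokes McNicholl's effective Implicit Function Theorem \cite{McN08a} to obtain, in one stroke, a computable continuously differentiable implicit function $g$ on an open neighbourhood of the coefficient vector; you instead use only the classical implicit function theorem for existence, monotonicity and smoothness of the root-selection map $F$, and then prove its computability by hand via Lemma~\ref{lemma:Grze}: on rational coefficient vectors the simple root is computed by exact-arithmetic bisection on an isolating interval, and the Lipschitz constant furnishes a (non-uniform, but that suffices) computable modulus of continuity, after which computability on the closed rational rectangle restricts to its open interior as required by the closure hypothesis. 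Your route is more elementary and self-contained, at the price of a few routine details you should make explicit: choose the isolating interval $J$ with rational endpoints and shrink the coefficient neighbourhood so that the sign change at those endpoints persists for all parameters (this is what makes the bisection well-founded); take the shrunken neighbourhood convex, e.g.\ an open ball, so that bounded partial derivatives really yield a global Lipschitz constant there; and note that the restriction of a computable function to a subdomain is again computable in the sense of Definition~\ref{definition:computable-function} (the same c.e.\ witness set works). A pleasant small simplification on your side is obtaining every rational, in particular $0$ and $1$, in $K$ by applying constant functions to $x_0$, where the paper instead uses $0=x_0+(-x_0)$ and $1=x_0\cdot(1/x_0)$; likewise passing directly to $p^{(m-1)}$ rather than iterating $p\mapsto p'$ is an equivalent cosmetic variant.
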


\begin{proof}
On $\IR^k$ let us consider the maximum norm $||\cdot||_\infty:\IR^k\to\IR^k$ defined by
\[ ||(x_1,\ldots,x_k)||_\infty := \max\{ |x_1|,\ldots,|x_k|\} , \]
for any vector $(x_1,\ldots,x_k)\in\IR^k$.

The additive inverse $-:\IR\to\IR$ is a total, Lipschitz continuous, computable function
with Lipschitz constant $1$.
The addition $+:\IR^2\to\IR$ is a total, Lipschitz continuous, computable function with Lipschitz constant $2$.
For any two numbers $x,x'\in\IR$ let us choose an integer $N$ with $N>|x|$ and $N>|x'|$.
The restriction $\cdot|_{(-N,N)^2}:(-N,N)^2\to\IR$ of the multiplication function to the open cube $(-N,N)^2$ is 
a Lipschitz continuous and computable function.
For any number $x\in\IR\setminus\{0\}$ let us choose an integer $N>1$ with $N>|x|$ and $1/N<|x|$.
The restriction $\cdot|_{(-N,-1/N) \cup (1/N,N)}:(-N,-1/N) \cup (1/N,N)\to\IR$
of the multiplicative inverse function to the
union of the two open intervals $(-N,-1/N)$ and $(1/N,N)$ is a Lipschitz continuous and computable function.
This shows that $K$ is closed under the
three total functions addition $+:\IR^2\to\IR$, additive inverse $-:\IR\to\IR$,
and multiplication $\cdot:\IR^2\to\IR$
and under the partial function multiplicative inverse $1/\cdot:\IR\setminus\{0\}\to\IR$.
Let $x_0$ be an element of $K$ with $x_0\neq 0$.
Then $0=x_0 + (-x_0)\in K$ and $1 = x_0 \cdot (1/x_0) \in K$.
We have shown that $K$ is a subfield of $\IR$.

Let $z\in K$ be a real number such that there exists a nonzero vector of real numbers $b_0,\ldots,b_k \in K$
with $b_k \cdot z^k + \ldots + b_1 \cdot z + b_0=0$. 
Then $k>0$.
We wish to show that $z$ is an element of $K$ as well.
Let $p(x)$ be the polynomial $p(x):=\sum_{i=0}^k b_i \cdot x^i$.
In the case $p(z)=p'(z)=0$ we can switch from the polynomial $p(x)$ to the polynomial $p'(x)$.
Note that in this case the coefficients of $p'(x)$ are elements of $K$ as well,
the vector of these coefficients is still nonzero, and the degree of $p'(x)$
is smaller than the degree of $p(x)$. As we can repeat this, we can assume that
$p(x)=\sum_{i=0}^k b_i\cdot x^i$ is a polynomial with a nonzero vector of coefficients $b_0,\ldots,b_k\in K$,
with $p(z)=0$ (note that this implies $k>0$), and with $p'(z)\neq 0$.
The function $F:\IR^{k+2}\to\IR$ defined by
\[ F(x):= \sum_{i=0}^k x_{i+1} \cdot x_{k+2}^i , \]
for $x=(x_1,\ldots,x_{k+2})\in\IR^{k+2}$
is computable and continuously differentiable and has computable partial derivatives.
It satisfies $F(b_0,\ldots,b_k,z)=0$ and $\frac{\partial F}{\partial x_{k+2}}(b_0,\ldots,b_k,z) \neq 0$.
By the effective version~\cite[Theorem 4.3]{McN08a} of the Implicit Function Theorem due to McNicholl
(and by the fact that, given computable input, a computable functional produces computable output)
there exist an open subset $V\subseteq\IR^{k+1}$ 
and a computable and continuously differentiable function $g:V\to\IR$ such that $(b_0,\ldots,b_k)\in V$,
$g(b_0,\ldots,b_k)=z$, and, for all $x=(x_1,\ldots,x_{k+1})\in V$, $\sum_{i=0}^k x_{i+1} \cdot g(x)^i = 0$.
As any continuously differentiable function is locally Lipschitz continuous
there exists an open subset $W\subseteq V$
with $(b_0,\ldots,b_k)\in W$ such that the restriction $g|_W$ is Lipschitz continuous.
This restriction $g|_W$ is computable as well.
Hence, the number $z$ is an element of $K$.
\end{proof}

\begin{corollary}
\label{cor:NC-real-closed-field}
The set $\NC$ is a real closed field.
\end{corollary}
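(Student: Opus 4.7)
The plan is to deduce Corollary~\ref{cor:NC-real-closed-field} directly from Lemma~\ref{lemma:real-closed-field} together with Proposition~\ref{proposition:nc-closed-under-functions}, so the argument should be very short.

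First, I would verify the hypothesis of Lemma~\ref{lemma:real-closed-field} for $K:=\NC$. The set $\NC$ contains a nonzero element: by Proposition~\ref{prop:comp-nearly-comp} every computable real number is nearly computable, and in particular $1\in\NC$, so we can take $x_0:=1\neq 0$.

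Second, I would invoke Proposition~\ref{proposition:nc-closed-under-functions}, which states that $\NC$ is closed under every computable function $f:\subseteq\IR^k\to\IR$ with open domain, for any $k\geq 1$. Since the class of Lipschitz continuous computable functions with open domain is a subclass of computable functions with open domain, $\NC$ is in particular closed under all such Lipschitz continuous computable functions with open domain.

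Third, with both hypotheses of Lemma~\ref{lemma:real-closed-field} satisfied, the lemma yields that $\NC$ is a real closed subfield of $\IR$, which is the claim of the corollary. There is no real obstacle here; the work has already been done in Proposition~\ref{proposition:nc-closed-under-functions} (the nontrivial closure property, which relies on the characterization of computable functions via moduli of continuity in Lemma~\ref{lemma:Grze} and on Lemma~\ref{lemma:ncc-pair-close} to pass to a rational approximation) and in Lemma~\ref{lemma:real-closed-field} (which uses the effective Implicit Function Theorem to obtain roots of polynomials as Lipschitz-continuous computable functions of their coefficients on a suitable open neighborhood). The corollary itself is merely the assembly of these two ingredients.
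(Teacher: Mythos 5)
Your proposal is correct and follows exactly the paper's argument: the paper also obtains the corollary by combining Lemma~\ref{lemma:real-closed-field} with Proposition~\ref{proposition:nc-closed-under-functions} and the fact that $1\in\NC$. Your additional remarks (Lipschitz computable functions with open domain being a special case, and $1\in\NC$ via Proposition~\ref{prop:comp-nearly-comp}) just spell out the same deduction in slightly more detail.
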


\begin{proof}
This follows from Lemma~\ref{lemma:real-closed-field} and
Proposition~\ref{proposition:nc-closed-under-functions}
and the fact that $1\in \NC$.
\end{proof}

We have seen in Proposition~\ref{proposition:nc-closed-under-functions}
that the set $\NC$ is closed under computable real functions with open domain.
Is it perhaps even closed under arbitary computable real functions? 
A negative answer is given by the following theorem and by the fact that
$\NC \setminus \EC$ is not empty; see Theorem~\ref{theorem:downey-laforte} in the following section.

\begin{theorem}
\label{theorem:range-comp-func}
There exists a computable (and strictly increasing) function $f:\IR\setminus\IQ\to\IR$
such that for every noncomputable
real number $\alpha$ the number $f(\alpha)$ is not nearly computable.
\end{theorem}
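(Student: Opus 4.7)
The plan is to define $f$ so that its image lies in a sparse Cantor-like subset of $\IR$, then argue that any nearly computable real in this set must be computable; this is incompatible with $\alpha$ being uniformly computable from $f(\alpha)$ when $\alpha$ is non-computable.

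I would fix a strictly increasing computable $h:\IN\to\IN$ with sufficiently large gaps (e.g.\ $h(n):=6n$) and define
\[ f(\alpha) := \sum_{n=0}^\infty b_n(\alpha)\cdot 2^{-h(n)},\qquad \alpha\in\IR\setminus\IQ, \]
where $b_n(\alpha)\in\{0,1\}$ is the $n$-th binary digit of the irrational $\alpha$. Since $b_n(\alpha)$ is uniformly computable from rational approximations of $\alpha$, and $\sum_{m>n}2^{-h(m)}$ is dominated by $2^{-h(n)}$ (by the gap condition), $f$ is computable on $\IR\setminus\IQ$ and strictly increasing. Its image lies in the Cantor-like set $C_h:=\{\sum_n\epsilon_n\cdot 2^{-h(n)}:(\epsilon_n)\in\{0,1\}^\IN\}$, a $\Pi^0_1$ class of Lebesgue measure zero, and the digits $\epsilon_n$ of a point of $C_h$ are uniformly recoverable from sufficiently good rational approximations.

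Now suppose $\alpha$ is non-computable; since $\alpha$ is uniformly computable from $f(\alpha)$ by reading the bit at each position $h(n)$, $f(\alpha)$ is non-computable, and it remains to show $f(\alpha)\notin\NC$. Assume toward a contradiction that $f(\alpha)\in\NC$ with witness $(c_k)_k$ a computable nearly computably Cauchy sequence of rationals tending to $f(\alpha)$. For each $k$ let $V_k:=\{\sum_{n=0}^{k}s_n\cdot 2^{-h(n)}:s\in\{0,1\}^{k+1}\}$; the gap condition guarantees that distinct elements of $V_k$ are separated by at least a constant fraction of $2^{-h(k)}$. Define a single computable strictly increasing $r:\IN\to\IN$ so that $r(k)$ is the next index $j$ at which $c_j$ lies within a threshold $\delta_k$ (on the order of $2^{-h(k)}$ times a small constant) of some necessarily unique $v_k\in V_k$; since $f(\alpha)$ itself lies within $\delta_k/2$ of the centre corresponding to its first $k+1$ digits, $r$ is well-defined and strictly increasing.

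Applying nearly computably Cauchy to this specific $r$ yields a computable modulus $g$ of convergence for $(|c_{r(k+1)}-c_{r(k)}|)_k\to 0$. A short calculation using the gap condition shows that once $|c_{r(k+1)}-c_{r(k)}|$ is small enough, the sequence of selected centres forms a refining chain $v_0\prec v_1\prec v_2\prec\cdots$ in $C_h$, and consequently the $n$-th digit of $v_k$ stabilises at $b_n(\alpha)$ for all $k\geq K_n$, where $K_n$ is computable from $n$ using $g$. The procedure ``on input $n$, compute $K_n$, then find the unique $v_{K_n}\in V_{K_n}$ within $\delta_{K_n}$ of $c_{r(K_n)}$, and output its $n$-th digit'' is then a total computable algorithm (using $g$ as a subroutine) producing $b_n(\alpha)$, so $\alpha$ is computable---the desired contradiction. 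The chief subtlety is that the modulus $g$ is not effectively produced by the definition of nearly computably Cauchy; the trick is to use a \emph{single} computable $r$ that simultaneously tracks all refinement levels, so that the mere \emph{existence} of one computable modulus for this one $r$ already guarantees the existence of a computable algorithm defining $\alpha$.
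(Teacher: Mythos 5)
Your proposal is correct in substance and is essentially the paper's own argument: the paper encodes the left Dedekind cut of $\alpha$ into the sparsely supported real $\sum_{n\colon \nu_\IQ(n)<\alpha}4^{-n}$ and proves precisely your key lemma (a nearly computable real whose digits sit at widely separated positions has a decidable digit set) as Proposition~\ref{proposition:4-A}, using the same device of a single computable selector $r$ that locates the approximations near the unique level-$n$ centre and then exploiting the modulus $g$ for that one $r$ to stabilise the digits effectively before decoding them. The only repair needed is in your definition of $f$: ``the $n$-th binary digit of $\alpha$'' yields a strictly increasing map only for irrationals in the unit interval, so to obtain the claimed $f$ on all of $\IR\setminus\IQ$ you should incorporate the integer part, e.g.\ $f(\alpha):=\lfloor\alpha\rfloor+\sum_{n}b_n(\alpha-\lfloor\alpha\rfloor)\cdot 2^{-h(n)}$ with $h(n):=6(n+1)$ so that the fractional sum stays below $1$ (the paper's Dedekind-cut coding sidesteps this issue entirely).
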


\begin{corollary}
\label{corollary:nc-not-closed-under-comp-functions}
The set $\NC$ of nearly computable numbers is not closed under arbitrary computable
functions $f:\subseteq\IR\to\IR$.
\end{corollary}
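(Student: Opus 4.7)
The plan is to construct $f$ by exploiting a loophole in Proposition~\ref{proposition:nc-closed-under-functions}: although $\IR \setminus \IQ$ is dense and of full measure in $\IR$, it is not an open subset of $\IR$, so the closure of $\NC$ under computable functions with open domain imposes no obstruction on $f$. The idea is to build $f$ by amplifying the local variation of the identity near each rational, so that $f$ remains globally bounded, strictly increasing and computable on $\IR\setminus\IQ$, while $f^{-1}$ is so ``contracting'' near every rational that any reasonable approximation of $f(\alpha)$ forces a strong form of approximation of $\alpha$.

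Concretely, fix an injective computable enumeration $(q_i)_i$ of $\IQ$ and computable sequences $(a_i)_i$, $(\varepsilon_i)_i$ of positive rationals with $\sum_i a_i < \infty$ and with $a_i/\varepsilon_i$ tending to infinity sufficiently rapidly. Let $h:\IR \to (-1,1)$ be a fixed computable, strictly increasing, odd bijection (e.g.\ $h(t) := t/(1+|t|)$). I would define
\[
 f(\alpha) \;:=\; \sum_{i=0}^{\infty} a_i \cdot h\!\bigl((\alpha - q_i)/\varepsilon_i\bigr), \qquad \alpha \in \IR\setminus\IQ.
\]
Then $f$ is strictly increasing (each summand is), bounded by $\sum_i a_i$, and computable on $\IR\setminus\IQ$: to produce an approximation of $f(\alpha)$ to precision $2^{-n}$, truncate the series at an $N$ with $2\sum_{i>N} a_i < 2^{-(n+1)}$ and compute the first $N+1$ summands using approximations of $\alpha$ that are precise enough to separate $\alpha$ from $q_0,\ldots,q_N$. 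Near each rational $q_i$ the term $a_i \cdot h((\alpha - q_i)/\varepsilon_i)$ has local slope $\sim a_i/\varepsilon_i$, which is arbitrarily large, while the total oscillation remains finite.

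For the main claim I would argue by contradiction. Suppose $\alpha \in \IR\setminus\IQ$ is noncomputable and $f(\alpha) \in \NC$, witnessed by a computable sequence $(\beta_n)_n$ of rationals with $\beta_n \to f(\alpha)$ that is nearly computably Cauchy. Using strict monotonicity and computability of $f$, one can, from $(\beta_n)_n$, computably produce a sequence of rational intervals $I_n = [\ell_n, u_n]$ each containing $\alpha$; the width $u_n - \ell_n$ is controlled by $|\beta_n - f(\alpha)|$ divided by the minimum ``slope'' of $f$ on $I_n$, which by construction is forced to be large whenever $\alpha$ is close to some $q_i$. The rapid growth of $a_i/\varepsilon_i$ should ensure that the widths shrink quickly; applying Lemma~\ref{lemma:ncc-basic-2} to a carefully chosen strictly-increasing computable $r:\IN\to\IN$ would translate the nearly-computably-Cauchy property of $(\beta_n)_n$ into a \emph{computable} modulus of shrinkage of the intervals $I_n$. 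Hence $(\ell_n)_n$ would be a computably Cauchy sequence of rationals converging to $\alpha$, forcing $\alpha \in \EC$, a contradiction.

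The hard part is precisely the last step: arranging the parameters $(a_i,\varepsilon_i)_i$ so that the pullback of ``nearly computably Cauchy'' under $f^{-1}$ is not merely nearly computably Cauchy --- which would only yield $\alpha \in \NC$, which is consistent with $\alpha$ being noncomputable --- but genuinely computably Cauchy. This calls for a construction running stage by stage against a standard listing of all computable strictly-increasing $r:\IN\to\IN$, much in the spirit of Theorem~\ref{theorem:increasing-ncc} but reversed, choosing at stage $i$ the values $a_i, \varepsilon_i$ so as to diagonalize against potential witnesses for nearly computable Cauchyness along the $i$-th enumerated reparametrization. One must simultaneously keep $f$ computable in the sense of Definition~\ref{definition:computable-function}, which constrains how fast the weights may decay; balancing the diagonalization against uniformity of the series on compact subsets of $\IR\setminus\IQ$ is the main technical difficulty.
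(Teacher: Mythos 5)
Your overall strategy is the same as the paper's (build a strictly increasing function, computable on $\IR\setminus\IQ$, such that $f(\alpha)\in\NC$ forces $\alpha\in\EC$, and then feed it an element of $\NC\setminus\EC$, whose existence is Theorem~\ref{theorem:downey-laforte}), but your concrete construction cannot work, and the obstruction is structural rather than technical. The series $\sum_i a_i\,h((\alpha-q_i)/\varepsilon_i)$ with $h(t)=t/(1+|t|)$ is a uniformly convergent series of \emph{total continuous} computable functions; each summand is computable at $q_i$ as well (it just vanishes there). If, as your own computability argument requires, the tail $\sum_{i>N}a_i$ can be bounded effectively, then the same formula defines a computable function $\widetilde f$ on all of $\IR$, i.e.\ a computable function with \emph{open} domain, agreeing with your $f$ on the irrationals. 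By Proposition~\ref{proposition:nc-closed-under-functions}, $\NC$ is closed under $\widetilde f$; since every $\alpha\in\NC\setminus\EC$ is irrational, $f(\alpha)=\widetilde f(\alpha)\in\NC$ for exactly those $\alpha$ you need, so the property you want ($\alpha$ noncomputable $\Rightarrow f(\alpha)\notin\NC$) is false for any function of this shape, no matter how the parameters $(a_i,\varepsilon_i)$ are diagonalized. Large local slopes are no substitute for discontinuities: the step you yourself flag as the ``hard part'' --- upgrading the nearly-computably-Cauchy property of $(\beta_n)_n$ to a computable modulus for $\alpha$ --- is not merely hard but impossible in your setting, precisely because of Proposition~\ref{proposition:nc-closed-under-functions}. (And if you instead drop the effective tail bound to evade this, your proof that $f$ is computable collapses, while $f$ remains continuous, so nothing is gained.)

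The paper's function is, in effect, your construction with the smooth bump $h$ replaced by a genuine jump: $f(\alpha)=\sum_{n\colon \nu_\IQ(n)<\alpha}4^{-n}$, which is computable on $\IR\setminus\IQ$, strictly increasing, and discontinuous at every rational, so it has no computable (or even continuous) extension to any open set and escapes Proposition~\ref{proposition:nc-closed-under-functions}. The decisive ingredient is then Proposition~\ref{proposition:4-A}: if $4^{-A}$ is nearly computable then $A$ is decidable. Its proof shows how nearly computable Cauchyness is actually exploited: one does not extract a computable modulus of convergence to the limit (that would make the limit computable, which is generally impossible); instead one chooses a computable strictly increasing $r$ so that each $a_{r(n)}$ is within $\tfrac12\cdot 4^{-n}$ of some $4^{-B_n}$ with $B_n\subseteq\{0,\dots,n\}$, and the computable modulus for the \emph{consecutive differences} $a_{r(m+1)}-a_{r(m)}$, combined with the $4^{-n}$ spacing of the candidate values, forces the finite sets $B_m\cap\{0,\dots,n\}$ to stabilize at $A\cap\{0,\dots,n\}$ by a computable stage. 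Applied to $A=\{n:\nu_\IQ(n)<\alpha\}$ this makes the left cut of $\alpha$ decidable, hence $\alpha$ computable. If you want to repair your proposal, replace the continuous bumps by these jumps and prove the discrete stabilization statement; that is exactly Theorem~\ref{theorem:range-comp-func} plus Proposition~\ref{proposition:4-A} in the paper.
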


\begin{proof}
A function $f$ as in Theorem~\ref{theorem:range-comp-func} maps any element
of $\NC\setminus\EC$ to $\IR\setminus \NC$.
As it is known that $\NC\setminus\EC$ is not empty (see Theorem~\ref{theorem:downey-laforte}),
we conclude that $\NC$ is not closed under $f$.
\end{proof}

In the proof of Theorem~\ref{theorem:range-comp-func} we use the following proposition.

\begin{proposition}
\label{proposition:4-A}
For a set $A\subseteq\IN$ the following conditions are equivalent.
\begin{enumerate}
\item
$A$ is decidable.
\item
The number $4^{-A}:=\sum_{n\in A} 4^{-n}$ is computable.
\item
The number $4^{-A}$ is nearly computable.
\end{enumerate}
\end{proposition}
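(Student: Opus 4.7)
The plan is to prove the cycle $(1)\Rightarrow(2)\Rightarrow(3)\Rightarrow(1)$. The implications $(1)\Rightarrow(2)$ and $(2)\Rightarrow(3)$ are routine: for $(1)\Rightarrow(2)$, the partial sums $s_N:=\sum_{n\in A,\,n\leq N}4^{-n}$ form a computable sequence of rationals with $|4^{-A}-s_N|\leq\tfrac{1}{3}4^{-N}$; and $(2)\Rightarrow(3)$ is immediate from Proposition~\ref{prop:comp-nearly-comp}. For the substantive direction $(3)\Rightarrow(1)$ I will actually prove the stronger $(3)\Rightarrow(2)$: any nearly computable $\alpha:=4^{-A}$ is already computable. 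This yields decidability of $A$ because a rational approximation of $\alpha$ within $\tfrac{1}{6}4^{-k}$ uniquely determines $A\cap\{0,\ldots,k-1\}$, since distinct truncations $\sum_{i\in B}4^{-i}$, $B\subseteq\{0,\ldots,k-1\}$, differ by at least $4^{-(k-1)}$ while $|\alpha-\alpha_{k-1}|\leq\tfrac{1}{3}4^{-(k-1)}$.

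So let $(a_n)_n$ be a computable nearly computably Cauchy sequence of rationals with $a_n\to\alpha$. My plan is to project onto the Cantor-like set $C:=\{\sum_{i\in B}4^{-i}:B\subseteq\IN\}$ that contains $\alpha$: for each $n$, let $b_n$ be the element of $C^{(n)}:=\{\sum_{i\in B}4^{-i}:B\subseteq\{0,\ldots,n\}\}$ closest to $a_n$, with a canonical tie-break, so $b_n=\sum_{i\in B_n}4^{-i}$ for a unique $B_n\subseteq\{0,\ldots,n\}$. Since $\alpha_n:=\sum_{i\in A\cap\{0,\ldots,n\}}4^{-i}\in C^{(n)}$ satisfies $|\alpha-\alpha_n|\leq\tfrac{1}{3}4^{-n}$, the inequality $|b_n-a_n|\leq|a_n-\alpha|+\tfrac{1}{3}4^{-n}$ forces $b_n\to\alpha$, and $(b_n)$ is a computable sequence in $C\cap\IQ$.

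Granting the key claim that $(b_n)$ is itself nearly computably Cauchy, I can finish quickly. Applying the ncc property with $r(n)=n$ yields a computable modulus $g$ with $|b_{n+1}-b_n|\leq 2^{-k}$ for all $n\geq g(k)$. Since any two distinct finite sums $\sum_{i\in B}4^{-i},\sum_{i\in B'}4^{-i}\in C$ differ by at least $\tfrac{2}{3}4^{-i_0}$ with $i_0:=\min(B\triangle B')$, the bound forces $\min(B_n\triangle B_{n+1})\geq\lceil k/2\rceil-1$ for $n\geq g(k)$. Consequently $B_n\cap\{0,\ldots,\lceil k/2\rceil-2\}$ is constant for all such $n$, and since $b_n\to\alpha$ this common prefix must equal $A\cap\{0,\ldots,\lceil k/2\rceil-2\}$. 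To decide $k\in A$ for a given $k$, I then compute $B_{g(2k+4)}\cap\{0,\ldots,k\}$, which establishes decidability of $A$.

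The main obstacle is thus showing $(b_n)$ is nearly computably Cauchy. A direct application of Lemma~\ref{lemma:ncc-pair-close} would require a computable modulus for $|a_n-b_n|\to 0$, which appears circular since such a bound combined with the projection would essentially encode a computable modulus for $a_n\to\alpha$. My proposed route is instead to exploit Proposition~\ref{proposition:ncc-seq-all}: first use the ncc witness to enumerate $A$ effectively and conclude that $\alpha$ is left-computable; next construct from $(a_n)$ and the standard enumeration $(\ell_s):=(\sum_{k\in A_s,\,k\leq s}4^{-k})_s$ a nondecreasing computable ncc sequence converging to $\alpha$ (for instance by a running-maxima construction capped below $\alpha$ via $(\ell_s)$); then Proposition~\ref{proposition:ncc-seq-all} transfers ncc to $(\ell_s)$ itself, and the classical enumeration ``change-point'' argument on $(\ell_s)$ produces a computable bound on when each element $k$ must have entered $A$, yielding decidability. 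I expect the delicate technical piece to be the construction of the nondecreasing ncc witness from $(a_n)$ together with the c.e.~approximation $(\ell_s)$.
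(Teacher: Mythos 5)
Your implications (1)$\Rightarrow$(2) and (2)$\Rightarrow$(3) are fine, but the substantive direction (3)$\Rightarrow$(1) is not established. You concede that your projected sequence $(b_n)_n$ cannot be shown to be nearly computably Cauchy (Lemma~\ref{lemma:ncc-pair-close} would need a computable modulus for $|a_n-b_n|\to 0$, which is exactly what is unavailable), and your fallback route begins with the step ``use the ncc witness to enumerate $A$ effectively and conclude that $\alpha$ is left-computable'' --- for which you give no argument, and none is routine: from a computable sequence converging to $4^{-A}$ without a modulus one cannot in general read off or enumerate the digits, and exploiting near computable Cauchyness to do so is precisely the content of the proposition. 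Indeed, once $A$ is known to be c.e.\ the rest is easy: $\alpha$ is then left-computable and Proposition~\ref{proposition:nearly-comp-eq}, condition~\eqref{proposition:nearly-comp-eq-5}, applied to $b_n:=4^{-f(n)}$ for a computable enumeration $f$ of $A$, yields decidability at once, exactly as in Proposition~\ref{proposition:strongly-lc}; so your plan smuggles in, in disguise, essentially the conclusion. (Your proposed running-maxima construction plus Proposition~\ref{proposition:ncc-seq-all} would also only re-derive parts of Proposition~\ref{proposition:nearly-comp-eq}.)

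The missing idea is to select a computable subsequence instead of projecting every term. Define $r(n)$ as the least $k>r(n-1)$ such that $\left|4^{-B}-a_k\right|<\frac{1}{2}\cdot 4^{-n}$ for some $B\subseteq\{0,\ldots,n\}$; the search halts because $\left|4^{-(A\cap\{0,\ldots,n\})}-4^{-A}\right|\leq\frac{1}{3}\cdot 4^{-n}$ and $a_k\to 4^{-A}$, and the condition is effectively checkable, so $r$ is total, strictly increasing, and computable. The witnessing set $B_n$ is unique (distinct numbers $4^{-C}$ with $C\subseteq\{0,\ldots,n\}$ are at distance at least $4^{-n}$ from each other) and computable from $n$. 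Now apply the near computable Cauchyness hypothesis to $r$ itself: with a computable, strictly increasing modulus $g$ for $(a_{r(m+1)}-a_{r(m)})_m\to 0$ one shows that for $m\geq g(2n+3)$ the set $B_m\cap\{0,\ldots,n\}$ is the unique $C\subseteq\{0,\ldots,n\}$ with $\left|4^{-C}-a_{r(m)}\right|<\frac{1}{2}\cdot 4^{-n}$, that it no longer changes with $m$, and that it must equal $A\cap\{0,\ldots,n\}$ because $a_{r(m)}\to 4^{-A}$. Then $n\in A$ iff $n\in B_{g(2n+3)}$, giving decidability directly, with no detour through left-computability or c.e.-ness: the certified closeness of $a_{r(m)}$ to $4^{-B_m}$ replaces the (unprovable) near computable Cauchyness of your projected sequence.
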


\begin{proof}
The implication ``1 $\Rightarrow$ 2'' is obvious.
The implication ``2 $\Rightarrow$ 3'' follows from Corollary~\ref{cor:conv-comp-Cauchy}
and Lemma~\ref{lemma:cChauchy-ncc-1}.
We show ``3 $\Rightarrow$ 1''.
Let us consider a set $A \subseteq \IN$ such that the number $4^{-A}$ is nearly computable.
We can assume without loss of generality $A\neq \emptyset$, hence, $4^{-A}>0$.
Let $(a_n)_n$ be a computable sequence of positive rational numbers
that is nearly computably Cauchy and converges to $4^{-A}$.
We define a function $r:\IN\to\IN$ by $r(-1):=-1$ and
\[ r(n) := \min\left\{ k \in \IN ~:~ k> r(n-1) \text{ and } 
      (\exists B\subseteq \{0,\ldots,n\}) \left|4^{-B} - a_k\right| < \frac{1}{2}\cdot 4^{-n} \right\}. \]
The function $r$ is well-defined because the sequence $(a_k)_k$ converges to $4^{-A}$, and we see
\[ |4^{- (A \cap \{0,\ldots,n\})} - 4^{-A}| \leq \sum_{i=n+1}^\infty 4^{-i} = \frac{1}{3} \cdot 4^{-n}
   < \frac{1}{2}\cdot 4^{-n} . \]
It is clear that $r$ is strictly increasing and computable.
Actually, for every $n\in\IN$ there is exactly one set $B\subseteq \{0,\ldots,n\}$
with $|4^{-B} - a_{r(n)}| < \frac{1}{2}\cdot 4^{-n}$
because the numbers $4^{-C}$ for $C\subseteq \{0,\ldots,n\}$ have distance at least $4^{-n}$ from each other.
Let us call this set $B_n$. Note that given $n$ we can compute $B_n$.
As $(a_n)_n$ is nearly computably Cauchy the sequence $(a_{r(n+1)} - a_{r(n)})_n$
converges computably to $0$.
Let $g:\IN\to\IN$ be a strictly increasing, computable modulus of convergence of this sequence.
We claim that 
\begin{equation}
\label{eq:claim-4-A}
(\forall m \geq g(2n+3)) B_m \cap \{0,1,\ldots,n\} = A \cap \{0,1,\ldots,n\}.
\end{equation}
Once this is shown it is clear that $A$ is decidable because given $n$ we can compute $B_{g(2n+3)}$
and can check whether $n\in B_{g(2n+3)}$ or not.
Let us show the claim~\eqref{eq:claim-4-A}.
For every $m\geq n$ the set $B_m \cap\{0,1,\ldots,n\}$ satisfies
\begin{eqnarray*}
|4^{-(B_m \cap\{0,1,\ldots,n\})} - a_{r(m)}| 
   & \leq & |4^{-(B_m \cap\{0,1,\ldots,n\})} - 4^{-B_m}| + |4^{-B_m} - a_{r(m)}| \\
   & < & \sum_{i=n+1}^m 4^{-i} + \frac{1}{2}\cdot 4^{-m}
   = \frac{1}{3}\cdot 4^{-n} - \frac{1}{3} \cdot 4^{-m} + \frac{1}{2}\cdot 4^{-m} \\
   &=& \frac{1}{3}\cdot 4^{-n} + \frac{1}{6} \cdot 4^{-m} \\
   &\leq & \frac{1}{2} \cdot 4^{-n} ,
\end{eqnarray*}
and it is the only set $C\subseteq\{0,1,\ldots,n\}$ with $|4^{-C} - a_{r(m)}| < \frac{1}{2} \cdot 4^{-n}$
because the numbers $4^{-C}$ for $C\subseteq \{0,\ldots,n\}$ have distance at least $4^{-n}$ from each other.
By reusing part of the calculation above we obtain for $m\geq g(2n+3)$
\begin{eqnarray*}
|4^{-(B_{m+1} \cap \{0,1,\ldots,n\})} - a_{r(m)}|
&\leq & |4^{-(B_{m+1} \cap \{0,1,\ldots,n\})} - a_{r(m+1)}| + |a_{r(m+1)} - a_{r(m)}| \\
&< & \frac{1}{3} \cdot 4^{-n} + \frac{1}{6} \cdot 4^{-(m+1)} + 2^{-(2n+3)} \\
&=& 4^{-n} \cdot \left( \frac{1}{3} + \frac{1}{6} \cdot 4^{-(m+1-n)} + \frac{1}{8} \right) \\
&<&  \frac{1}{2} \cdot 4^{-n} 
\end{eqnarray*}
(for the last step note that $m\geq g(2n+3) \geq 2n+3$ because $g$ is assumed to be strictly increasing).
So, $B_{m+1} \cap \{0,1,\ldots,n\}$ is also a set $C\subseteq \{0,1,\ldots,n\}$
with $|4^{-C} - a_{r(m)}| < \frac{1}{2} \cdot 4^{-n}$.
But we just saw that there can be only one set $C$ with this property,
and that that is the set $B_m\cap\{0,1,\ldots,n\}$.
We conclude
\begin{equation}
\label{eq:claim-4-A-b}
(\forall m \geq g(2n+3)) B_m \cap \{0,1,\ldots,n\} = B_{m+1} \cap \{0,1,\ldots,n\}.
\end{equation}
For $m$ so large that $|4^{-A} - a_{r(m)}| < \frac{1}{6} \cdot 4^{-n}$ we similarly observe that
\[ |4^{-(A \cap \{0,1,\ldots,n\})} - a_{r(m)}| 
\leq |4^{-(A \cap \{0,1,\ldots,n\})} - 4^{-A}| + |4^{-A} - a_{r(m)}| 
   < \sum_{i=n+1}^\infty 4^{-i} + \frac{1}{6}\cdot 4^{-n} 
   = \frac{1}{2} \cdot 4^{-n} . \]
By the same uniqueness argument as above we conclude that for sufficiently large $m$ we have
$B_m\cap\{0,1,\ldots,n\}=A\cap\{0,1,\ldots,n\}$.
This and~\eqref{eq:claim-4-A-b} imply \eqref{eq:claim-4-A}.
This ends the proof of the implication ``3 $\Rightarrow$ 1''.
\end{proof}

\begin{proof}[{Proof of Theorem~\ref{theorem:range-comp-func}}]
We define a function $f:\IR\setminus\IQ\to\IR$ by
\[ f(\alpha) := \sum_{n\colon \nu_\IQ(n)<\alpha} 4^{-n} , \]
for $\alpha\in\IR\setminus\IQ$.
This function is computable because, for any $n$ and any irrational number $\alpha$,
if one knows $\alpha$ with sufficient precision
then one can decide whether $\nu_\IQ(n)<\alpha$ or not. 
It is clear that the function $f$ is strictly increasing.

Let us consider some irrational real number $\alpha$ such that $f(\alpha)$ is nearly computable.
Then by Proposition~\ref{proposition:4-A} the set $\{n \in \IN ~:~ \nu_\IQ(n) < \alpha\}$ is decidable.
Hence, $\alpha$ is computable.
\end{proof}

\section{Left computable, nearly computable numbers}
\label{section:nearly-computable-left-computable}

In this section we are going to have a look at the set $\LC\cap\NC$ of real numbers that are at the same time
left-computable and nearly computable.

A subset $A\subseteq\{0,1\}^*$ is called {\em prefix-free} 
if, for all $u,v\in A$, if $u\subseteq v$ then $u=v$, in other words,
if no string $u$ in $A$ is a proper prefix of any other string $v$ in $A$.
We shall make use of the following well-known effective version of the Kraft theorem,
sometimes called Kraft-Chaitin Theorem; see, e.g.,~\cite[Theorem 3.6.1]{DH10}.

\begin{lemma}
\label{lemma:Kraft-Chaitin}
Let $f:\IN\to\IN$ be a computable function such that $\sum_{n=0}^\infty 2^{-f(n)}\leq 1$.
Then there exists an injective computable function $h:\IN\to\{0,1\}^*$ such that the set $\{h(n)~:~n\in\IN\}$
is prefix-free and $|h(n)|=f(n)$, for all $n\in\IN$.
\end{lemma}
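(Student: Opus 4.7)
The plan is to build $h$ recursively one step at a time, maintaining a finite ``pool'' $A_n \subseteq \{0,1\}^*$ of strings whose associated dyadic intervals cover precisely the portion of $[0,1)$ not yet used by $h(0), \ldots, h(n-1)$. I set $A_0 := \{\varepsilon\}$. At step $n$ I would select a suitable $v \in A_n$, set $h(n) := v \cdot 0^{f(n)-|v|}$, remove $v$ from the pool, and add back into the pool the ``right-siblings'' $v\cdot 0^i \cdot 1$ for $0 \leq i < f(n) - |v|$ encountered on the path from $v$ down to $h(n)$ in the infinite binary tree. The decisive choice is to take $v$ of \emph{maximal} length among members of $A_n$ with $|v| \leq f(n)$.

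I would then establish by induction on $n$ the invariants: (i) $A_n$ is a finite prefix-free subset of $\{0,1\}^*$; (ii) the lengths of the strings in $A_n$ are pairwise distinct; (iii) $\sum_{v\in A_n} 2^{-|v|} = 1 - \sum_{k<n} 2^{-f(k)}$; (iv) the set $\{h(0), \ldots, h(n-1)\} \cup A_n$ is prefix-free. The induction step reduces to checking that a suitable $v$ exists, which is where the hypothesis $\sum_{n} 2^{-f(n)} \leq 1$ enters: by (iii), $\mu(A_n) \geq 2^{-f(n)}$, and by (ii), if every string in $A_n$ had length $>f(n)$, the finite sum $\sum_{v \in A_n} 2^{-|v|}$ would be strictly smaller than $\sum_{j > f(n)} 2^{-j} = 2^{-f(n)}$, a contradiction.

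The one place that needs real care is maintaining invariant (ii). A naive ``take the shortest $v \in A_n$ with $|v| \leq f(n)$'' rule can produce pools with repeated lengths, after which the mass argument above collapses. The maximal-length choice sidesteps this: by the distinct-lengths invariant, no other member of $A_n$ has length in the interval $(|v|, f(n)]$, so the freshly added strings of lengths $|v|+1, \ldots, f(n)$ do not collide with any surviving length, and prefix-freeness of $A_{n+1}$ and of $\{h(0),\ldots,h(n)\} \cup A_{n+1}$ is routine from the geometry of the binary tree. Once the invariants are in place, the conclusions are immediate: $h$ is total and computable because each step is a bounded search in the finite computable object $A_n$; $|h(n)|=f(n)$ by construction; and injectivity together with prefix-freeness of $\{h(n):n\in\IN\}$ follow from (iv), since each $h(n)$ lies in a dyadic cone disjoint from all subsequent pool members and from every previously output $h(k)$.
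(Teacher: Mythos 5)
Your construction is correct: the pool invariants (finite, prefix-free, pairwise distinct lengths, measure $1-\sum_{k<n}2^{-f(k)}$, joint prefix-freeness with the outputs) are all preserved by the maximal-length selection rule, and you rightly identify the distinct-lengths invariant as the point where a naive selection rule would break the mass argument guaranteeing that a string $v\in A_n$ with $|v|\leq f(n)$ exists. The paper itself gives no proof of this lemma --- it is quoted as the well-known Kraft--Chaitin Theorem with a citation to Downey and Hirschfeldt (Theorem 3.6.1) --- and your argument is essentially the standard proof given there, where the unallocated part of $[0,1)$ is maintained as dyadic intervals of pairwise distinct sizes (the binary expansion of the remaining measure) and each request of weight $2^{-f(n)}$ is served by splitting the smallest available interval of size at least $2^{-f(n)}$, which corresponds exactly to your choice of $v$ of maximal length with $|v|\leq f(n)$.
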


The equivalence of the conditions \eqref{proposition:nearly-comp-eq-4}
and \eqref{proposition:nearly-comp-eq-6}
in the following proposition is essentially the content of Theorem 5 in \cite{SW05}.

\begin{proposition}
\label{proposition:nearly-comp-eq}
For a left-computable number $\alpha$ the following four conditions are equivalent:
\begin{enumerate}
\item
\label{proposition:nearly-comp-eq-1}
It is nearly computable.
\item
\label{proposition:nearly-comp-eq-2}
There exists a strictly increasing, computable, and nearly computably Cauchy sequence $(a_n)_n$
of rational numbers converging to $\alpha$.
\item
\label{proposition:nearly-comp-eq-3}
Every nondecreasing, computable sequence $(a_n)_n$ of real numbers
converging to $\alpha$ is nearly computably Cauchy.
\item
\label{proposition:nearly-comp-eq-4}
For every strictly increasing, computable sequence $(a_n)_n$ of rational numbers
converging to $\alpha$ the sequence $(a_{n+1}-a_n)_n$ converges computably to $0$.
\end{enumerate}
For a left-computable number $\alpha$ with $\alpha\geq 0$ the following condition is equivalent to these conditions as well:
\begin{enumerate}
\setcounter{enumi}{4}
\item
\label{proposition:nearly-comp-eq-5}
Every computable sequence $(b_n)_n$ of positive rational numbers with $\sum_{i=0}^\infty b_i=\alpha$
converges computably to $0$.
\end{enumerate}
For a left-computable number $\alpha \in [0,1]$ the following condition is equivalent to these conditions as well:
\begin{enumerate}
\setcounter{enumi}{5}
\item
\label{proposition:nearly-comp-eq-6}
Every prefix-free, computably enumerable set $A\subseteq \{0,1\}^*$ with
$\alpha=\sum_{w \in A} 2^{-|w|}$ is decidable.
\end{enumerate}
\end{proposition}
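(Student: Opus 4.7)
The plan is to establish the equivalences $(1) \Leftrightarrow (2) \Leftrightarrow (3) \Leftrightarrow (4)$ through a short network of implications, and then to handle the extensions $(5)$ for $\alpha \geq 0$ and $(6)$ for $\alpha \in [0,1]$ separately. The substantive implication is $(1) \Rightarrow (2)$, which combines the given nearly computably Cauchy sequence with a left-computable approximation of $\alpha$ via Lemma~\ref{lemma:ncc-pair-close}.

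The routine parts are dispatched as follows. $(2) \Rightarrow (1)$ is immediate from the definition of nearly computable. For $(3) \Rightarrow (2)$, condition (3) applied to a strictly increasing computable rational sequence $(b_n)_n \to \alpha$ supplied by the left-computability of $\alpha$ shows that $(b_n)_n$ is nearly computably Cauchy. $(3) \Rightarrow (4)$ is the specialization of the ncc condition to $r = \mathrm{id}$, applied to any strictly increasing computable rational sequence converging to $\alpha$. $(2) \Rightarrow (3)$ is Proposition~\ref{proposition:ncc-seq-all} applied to the sequence from (2) together with any nondecreasing computable sequence converging to $\alpha$. For $(4) \Rightarrow (2)$, I would observe that for every strictly increasing computable $r$, the subsequence $(b_{r(n)})_n$ of the left-computable sequence is itself strictly increasing computable rational converging to $\alpha$, so (4) applied to $(b_{r(n)})_n$ yields $(b_{r(n+1)} - b_{r(n)})_n \to 0$ computably, which is exactly the ncc condition for $(b_n)_n$.

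For $(1) \Rightarrow (2)$, let $(c_n)_n$ be a computable rational ncc sequence converging to $\alpha$ and let $(b_n)_n$ be a strictly increasing computable rational sequence converging to $\alpha$. Since $c_n, b_n \in \IQ$ and $|c_n - b_n| \to 0$, I can define a strictly increasing computable $\sigma \colon \IN \to \IN$ by bounded search:
\[
\sigma(0) := \min\{n : |c_n - b_n| \leq 1\}, \qquad \sigma(k+1) := \min\{n > \sigma(k) : |c_n - b_n| \leq 2^{-(k+1)}\}.
\]
The subsequence $(c_{\sigma(k)})_k$ is itself nearly computably Cauchy: for any strictly increasing computable $s$, the composition $\sigma \circ s$ is strictly increasing and computable, and the ncc property of $(c_n)_n$ applied to $\sigma \circ s$ yields computable convergence of $(c_{\sigma(s(k+1))} - c_{\sigma(s(k))})_k$ to $0$. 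Since $|c_{\sigma(k)} - b_{\sigma(k)}| \leq 2^{-k}$ for all $k$, Lemma~\ref{lemma:ncc-pair-close} transfers the ncc property to $(b_{\sigma(k)})_k$, a strictly increasing computable rational sequence converging to $\alpha$, proving (2). The main obstacle I had to overcome here is that $(c_n)_n$ need not be monotonic; passing to the subsequence indexed by $\sigma$ sidesteps this by pairing $(c_n)_n$ with $(b_n)_n$ on an effectively thinned index set.

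For the extensions: the equivalence $(4) \Leftrightarrow (5)$ when $\alpha \geq 0$ follows from the bijection between positive rational computable sequences summing to $\alpha$ and strictly increasing rational computable sequences converging to $\alpha$ via partial sums and consecutive differences, with a trivial adjustment at the start to ensure positivity of the first term. For $(5) \Leftrightarrow (6)$ when $\alpha \in [0,1]$: any enumeration $A = \{w_0, w_1, \ldots\}$ of a prefix-free c.e.\ set with $\alpha = \sum_n 2^{-|w_n|}$ produces a positive rational computable sequence summing to $\alpha$, so (5) forces $|w_n| \to \infty$ computably and hence makes $A$ decidable; conversely, given a positive rational computable sequence $(b_n)_n$ summing to $\alpha$, decomposing each $b_n$ into its finitely many dyadic summands and applying the Kraft--Chaitin Lemma~\ref{lemma:Kraft-Chaitin} produces a prefix-free c.e.\ set $A$ with $\alpha = \sum_{w \in A} 2^{-|w|}$, and decidability of $A$ from (6) provides a computable bound for when all codewords of length at most $k$ have been enumerated, translating into a computable modulus of convergence of $(b_n)_n$ to $0$.
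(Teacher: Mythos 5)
Your treatment of the equivalence of \eqref{proposition:nearly-comp-eq-1}--\eqref{proposition:nearly-comp-eq-4} is essentially the paper's own proof: the paper also proves \eqref{proposition:nearly-comp-eq-1}$\Rightarrow$\eqref{proposition:nearly-comp-eq-2} by pairing the given nearly computably Cauchy sequence with a strictly increasing computable rational sequence converging to $\alpha$ along a computable, strictly increasing index function on which the two sequences are $2^{-k}$-close, and then invoking Lemma~\ref{lemma:ncc-pair-close}; your direct verification that $(c_{\sigma(k)})_k$ inherits the nearly computably Cauchy property is exactly Lemma~\ref{lemma:nearly-computably-convergent-1}. The equivalence \eqref{proposition:nearly-comp-eq-2}$\Leftrightarrow$\eqref{proposition:nearly-comp-eq-3} via Proposition~\ref{proposition:ncc-seq-all}, the implication \eqref{proposition:nearly-comp-eq-4}$\Rightarrow$\eqref{proposition:nearly-comp-eq-2} by applying \eqref{proposition:nearly-comp-eq-4} to the subsequences $(b_{r(n)})_n$, and the direction \eqref{proposition:nearly-comp-eq-5}$\Rightarrow$\eqref{proposition:nearly-comp-eq-6} all coincide with the paper's arguments. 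One minor omission: in \eqref{proposition:nearly-comp-eq-5}$\Rightarrow$\eqref{proposition:nearly-comp-eq-4} your partial-sums/differences correspondence tacitly assumes $\alpha>0$; for $\alpha=0$ condition \eqref{proposition:nearly-comp-eq-5} is vacuous while \eqref{proposition:nearly-comp-eq-4} is not, so you need the separate (easy) remark that $\alpha=0$ is computable, whence any strictly increasing computable sequence converging to it converges computably (Lemma~\ref{lemma:nondecreasing-cc}) and its difference sequence converges computably to $0$.

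The genuine gap is in \eqref{proposition:nearly-comp-eq-6}$\Rightarrow$\eqref{proposition:nearly-comp-eq-5}. A positive rational number need not be a finite sum of (distinct) powers of $2$ --- for instance $b_n=\tfrac13$ --- so ``decomposing each $b_n$ into its finitely many dyadic summands'' is in general impossible, and the list of lengths you intend to feed into Lemma~\ref{lemma:Kraft-Chaitin} is not defined. This is exactly the point the paper's construction is designed to handle: it greedily defines a total computable $f:\IN\to\IN$ with $\sum_{i=0}^\infty 2^{-f(i)}=\alpha$ exactly (so that \eqref{proposition:nearly-comp-eq-6} applies to the resulting prefix-free set $A=h(\IN)$) and with $2^{-f(n)}\geq b_n/2$ (so that decidability of $A$, which bounds from below the lengths $|h(i)|=f(i)$ for large $i$, yields $b_i\leq 2\cdot 2^{-f(i)}\leq 2^{-n}$, i.e.\ a computable modulus for $(b_n)_n$). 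Your idea can be repaired --- use the full, possibly infinite, binary expansions of the $b_n$, dovetailed into a single computable list of lengths summing exactly to $\alpha$, and observe that $b_n>2^{-k}$ forces a summand of size at least $2^{-(k+1)}$, whose codeword has length at most $k+1$ and hence appears by a computable stage --- but both the exactness of the sum and the passage from decidability of $A$ back to a modulus for $(b_n)_n$ require this extra care, which your one-sentence sketch does not supply.
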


\begin{proof}[Proof of Proposition~\ref{proposition:nearly-comp-eq}]
``\eqref{proposition:nearly-comp-eq-1} $\Rightarrow$ \eqref{proposition:nearly-comp-eq-2}'':
Let $(a_n)_n$ be a computable and nearly computably Cauchy sequence of rational numbers
converging to $\alpha$.
Let $(b_n)_n$ be a computable and strictly increasing sequence of rational numbers converging to $\alpha$.
As both sequences have the same limit and are computable there exists
a strictly increasing, computable function $s:\IN\to\IN$ such that,
for all $n$, $|a_{s(n)} - b_{s(n)}|\leq 2^{-n}$.
By Lemma~\ref{lemma:nearly-computably-convergent-1}
the sequence $(a_{s(n)})_n$ is nearly computably Cauchy as well,
and it is clear that it converges to $\alpha$ as well.
By Lemma~\ref{lemma:ncc-pair-close} the sequence $(b_{s(n)})_n$ is nearly computably Cauchy as well,
and it is clear that it converges to $\alpha$ as well.
Furthermore, this sequence is strictly increasing.

``\eqref{proposition:nearly-comp-eq-2} $\Rightarrow$ \eqref{proposition:nearly-comp-eq-1}'':
Trivial.

``\eqref{proposition:nearly-comp-eq-2} $\Leftrightarrow$ \eqref{proposition:nearly-comp-eq-3}'':
By Proposition~\ref{proposition:ncc-seq-all}.

``\eqref{proposition:nearly-comp-eq-3} $\Rightarrow$ \eqref{proposition:nearly-comp-eq-4}'':
Trivial.

``\eqref{proposition:nearly-comp-eq-4} $\Rightarrow$ \eqref{proposition:nearly-comp-eq-2}'':
Let $(a_n)_n$ be a strictly increasing, computable sequence in $\IQ$ converging to $\alpha$.
Let $r:\IN\to\IN$ be a total, strictly increasing, computable function.
We define $b_n:=a_{r(n)}$.
Then the sequence $(b_n)_n$ is a strictly increasing, computable sequence in $\IQ$
converging to $\alpha$ as well.
By \eqref{proposition:nearly-comp-eq-4} the sequence
$(a_{r(n+1)} - a_{r(n)})_n = (b_{n+1}-b_n)_n$ converges computably to $0$. 
Hence, $(a_n)_n$ is nearly computably Cauchy.

In the rest of the proof we assume that $\alpha\geq 0$.

``\eqref{proposition:nearly-comp-eq-4} $\Rightarrow$ \eqref{proposition:nearly-comp-eq-5}'':
If $\alpha=0$ then there is no sequence $(b_n)_n$ of positive rational numbers
with $\sum_{i=0}^\infty b_i=\alpha$.
Hence, in this case nothing needs to be shown.
Let us consider the case $\alpha>0$.
Let $(b_n)_n$ be a computable sequence of positive rational numbers with $\sum_{i=0}^\infty b_i=\alpha$.
We define $a_n:=\sum_{i=0}^{n-1} b_i$, for all $n$.
Then the sequence $(a_n)_n$ is a strictly increasing, computable sequence of rational numbers
converging to $\alpha$.
By \eqref{proposition:nearly-comp-eq-4} the sequence $(a_{n+1} - a_n)_n = (b_n)_n$
converges computably to $0$.

``\eqref{proposition:nearly-comp-eq-5} $\Rightarrow$ \eqref{proposition:nearly-comp-eq-4}'':
Let us first consider the case $\alpha=0$. Then $\alpha$ is a computable number.
Let us consider a strictly increasing, computable sequence $(a_n)_n$ of rational numbers
converging to $0$. By Lemma~\ref{lemma:nondecreasing-cc}
it converges computably, by Corollary~\ref {cor:conv-comp-Cauchy}
it is computably Cauchy, by Lemma~\ref{lemma:cChauchy-ncc-1} it is nearly computably Cauchy.
Hence, in particular the sequence $(a_{n+1} - a_n)_n$ converges computably to $0$.
Now let us consider the case $\alpha>0$.
Let us consider a strictly increasing, computable sequence $(a_n)_n$ of rational numbers
converging to $\alpha$. There exists some $N>0$ with $a_N>0$. Let the sequence $(c_n)_n$ be defined
by $c_0:=0$ and $c_n:=a_{N+n}$, for $n>0$.
Then the sequence $(c_n)_n$ is a strictly increasing computable sequence of rational
numbers converging to $\alpha$. Let the sequence $(b_n)_n$ be defined by $b_n:=c_{n+1} - c_n$.
Then $(b_n)_n$ is a computable sequence of positive rational numbers with $\sum_{i=0}^\infty b_i=\alpha$.
By \eqref{proposition:nearly-comp-eq-5} it converges computably to $0$.
That means, the sequence $(c_{n+1} - c_n)_n$ converges computably to $0$.
As the sequences $(c_{n+1} - c_n)_n$ and $(a_{n+1} - a_n)_n$ are identical up to a finite prefix,
the sequence $(a_{n+1} - a_n)_n$ converges computably to $0$ as well.

In the rest of the proof we assume that $\alpha\in [0,1]$.

``\eqref{proposition:nearly-comp-eq-5} $\Rightarrow$ \eqref{proposition:nearly-comp-eq-6}'':
Let $A\subseteq \{0,1\}^*$ be a prefix-free, computably enumerable set with
$\alpha=\sum_{w \in A} 2^{-|w|}$.
If $A$ is finite then it is decidable. So, let us assume that $A$ is infinite. Let
$h:\IN\to\{0,1\}^*$ be a total, injective, computable function with $\mathrm{range}(h)=A$.
We define $b_n:= 2^{-|h(n)|}$, for all $n\in\IN$.
Then $(b_n)_n$ is a computable sequence of positive rational numbers with $\sum_{i=0}^\infty b_i=\alpha$.
By (\ref{proposition:nearly-comp-eq-5}) the sequence $(b_n)_n$ converges computably.
It is clear that it converges to $0$.
Hence, there exists a total, computable function $g:\IN\to\IN$ such that, for all $m,n\in\IN$,
\[ m \geq g(n) \Rightarrow | b_m | \leq 2^{-n} . \]
Hence, for all $m,n\in\IN$,
\[ m \geq g(n) \Rightarrow |h(m)| \geq n . \]
But this shows that $A$ is decidable: A string $w$ of length $n$ is an element of $A$
if, and only if, it is an element of $\{h(0),\ldots,h(g(n+1)-1)\}$.

``\eqref{proposition:nearly-comp-eq-6} $\Rightarrow$ \eqref{proposition:nearly-comp-eq-5}'':
In the case $\alpha=0$ nothing needs to be shown. So, we consider the case $0 < \alpha \leq 1$.
We assume that (\ref{proposition:nearly-comp-eq-6}) is true.
Let $(b_n)_n$ be a computable sequence of positive rational numbers with $\sum_{i=0}^\infty b_i = \alpha$.
We wish to show that it converges computably. 
We define a function $f:\IN\to\IN$ recursively as follows:
\[ 
 f(0) := \min\left\{k \in\IN ~:~ 2^{-k} < b_0\right\},
\]
and, assuming that we have defined $f(0),\ldots,f(n)$ so that
$\sum_{i=0}^n 2^{-f(i)} < \sum_{i=0}^n b_i$, we define
\[
 f(n+1) := \min\left\{k\in\IN ~:~ \sum_{i=0}^n 2^{-f(i)} + 2^{-k} < \sum_{i=0}^{n+1} b_i \right\} .
\]
Then $f$ is total and computable and satisfies 
$\sum_{i=0}^\infty 2^{-f(i)} = \alpha$.
Furthermore, $2^{-f(n)} \geq b_n/2$, for all $n$.
By Lemma~\ref{lemma:Kraft-Chaitin} there exists a total, injective, computable function
$h:\IN\to\{0,1\}^*$ such that the set $A:=h(\IN)=\{h(n)~:~n\in\IN\}$ is prefix-free and satisfies
$|h(i)| = f(i)$, for all $i\in\IN$. The set $A$ is computably enumerable.
By (\ref{proposition:nearly-comp-eq-6}) it is decidable.
The function $g:\IN\to\IN$ defined by
\[ g(n) := \min\left\{k\in\IN ~:~ A \cap \left(\bigcup_{i=0}^n \{0,1\}^i\right)
   \subseteq \{h(0),\ldots,h(k)\} \right\} , \]
for all $n\in\IN$ is total and computable. We observe that, for all $i>g(n)$,
\[ f(i)=|h(i)| \geq n+1, \]
hence, $b_i/2\leq 2^{-f(i)} \leq 2^{-(n+1)}$.
This shows that the sequence $(b_n)_n$ converges computably.
\end{proof}

The main insight in the proof of the following theorem is due to Downey and LaForte~\cite{DL02}.

\begin{theorem}
\label{theorem:downey-laforte}
$\EC \subsetneq \LC\cap\NC$.
\end{theorem}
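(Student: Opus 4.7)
The plan has two parts: the routine inclusion $\EC \subseteq \LC \cap \NC$, and the witness to strictness.

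For $\EC \subseteq \LC \cap \NC$, the $\NC$-part is immediate from Proposition~\ref{prop:comp-nearly-comp} applied to the computable metric space $(\IR,d,\nu_\IQ)$. For the $\LC$-part, given $x \in \EC$ with a computable rational sequence $(q_n)_n$ satisfying $|x-q_n|\leq 2^{-n}$, the sequence $(q_n - 2^{-n})_n$ is a computable sequence of rationals strictly below $x$ converging to $x$; a routine computable thinning produces a strictly increasing computable sequence converging to $x$, witnessing $x\in\LC$.

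For the strict inclusion, the plan is to invoke the equivalence of conditions \eqref{proposition:nearly-comp-eq-1} and \eqref{proposition:nearly-comp-eq-6} in Proposition~\ref{proposition:nearly-comp-eq}: a left-computable $\alpha\in[0,1]$ lies in $\NC$ if, and only if, every prefix-free computably enumerable set $A\subseteq\{0,1\}^*$ with $\alpha=\sum_{w\in A}2^{-|w|}$ is already decidable. It therefore suffices to exhibit a non-computable left-computable real $\alpha\in(0,1)$ such that every such prefix-free c.e.\ representation of $\alpha$ is decidable. This is precisely what Downey and LaForte~\cite{DL02} construct via their priority argument producing a left-c.e.\ real that is not strongly Martin-L\"of style random, i.e.\ all prefix-free c.e.\ representations of its weight are decidable, while the real itself codes a non-computable c.e.\ set.

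The main obstacle (already handled in \cite{DL02}) is the priority construction itself — simultaneously coding a non-computable c.e.\ set into $\alpha$ while maintaining decidability of every prefix-free c.e.\ weight-representation of $\alpha$. Since that construction is available in the literature, the proof in this section will just cite \cite{DL02} for the existence of such an $\alpha$ and apply the equivalence \eqref{proposition:nearly-comp-eq-1}~$\Leftrightarrow$~\eqref{proposition:nearly-comp-eq-6} of Proposition~\ref{proposition:nearly-comp-eq} to conclude $\alpha\in\LC\cap\NC\setminus\EC$, which together with the first paragraph yields $\EC\subsetneq\LC\cap\NC$.
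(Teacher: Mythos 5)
Your proposal is correct and follows essentially the same route as the paper: $\EC\subseteq\NC$ via Proposition~\ref{prop:comp-nearly-comp}, the easy observation $\EC\subseteq\LC$, and strictness by combining the Downey--LaForte construction of a non-computable left-computable real all of whose prefix-free c.e.\ presentations are decidable with the equivalence \eqref{proposition:nearly-comp-eq-1}~$\Leftrightarrow$~\eqref{proposition:nearly-comp-eq-6} of Proposition~\ref{proposition:nearly-comp-eq}. The only difference is that you spell out the $\LC$-inclusion, which the paper dismisses as clear.
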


\begin{proof}
The inclusion $\EC\subseteq\LC$ is clear.
The inclusion $\EC\subseteq\NC$ was already stated in Proposition~\ref{prop:comp-nearly-comp}.
The fact that there exists a real number that is left-computable and nearly computable but not computable
follows from Proposition~\ref{proposition:nearly-comp-eq}
and from the main result by Downey and LaForte~\cite{DL02} who proved that there exists
a real number in the interval $[0,1]$ that is left-computable but not computable
and satisfies Condition \eqref{proposition:nearly-comp-eq-6}
in Proposition~\ref{proposition:nearly-comp-eq}.
\end{proof}

The proof of the main result of Downey and LaForte~\cite{DL02} is by an infinite injury priority argument.
Thus, one may say that the set of real numbers that are left-computable and nearly computably
is only slightly larger than the set of computable numbers.

Often the requirement that a real number should be left-computable and nearly computable
in combination with some other effectivity requirement forces the real number to be even computable.
In this section we discuss two observations of this kind
and in Section~\ref{section:nearly-computable-randomness} three further ones.

\begin{definition}
For a set $A\subseteq \IN$ let $0.A:=\sum_{n\in A} 2^{-n-1}$
\end{definition}

If one identifies a set $A\subseteq\IN$ with its characteristic function
$\chi_A\in\{0,1\}^\IN$, an infinite binary sequence,
then one can say that $0.A$ is the real number in the interval $[0,1]$ with binary representation $0.A$.
Note that the function $A\mapsto 0.A$ is a surjection from the power set
$\mathcal{P}(\IN):=\{A ~:~ A \subseteq \IN\}$ onto the real interval $[0,1]$.
It is also well known and easy to see that for any set $A\subseteq \IN$ the real number 
$0.A$ is computable if, and only if, $A$ is decidable.

\begin{definition}
A real number $\alpha$ is called {\em strongly left-computable} if there exists a computably enumerable
set $A\subseteq \IN$ such that $\alpha = 0.A$.
\end{definition}

Any computable number in $[0,1]$ is strongly left-computable.
The existence of a c.e. set $A\subseteq \IN$ that is not decidable implies that there are strongly left-computable
real numbers that are not computable. 
It is clear that any strongly left-computable number is left-computable.
But it is also well known and easy to see 
that there are left-computable numbers in the interval $[0,1]$
that are not strongly left-computable; see Soare~\cite{Soa69}.
So, any computable number in $[0,1]$ is strongly left-computable and nearly computable.
An argument by Downey, Hirschfeldt, and LaForte~\cite[Pages 105, 106]{DHL04}
shows that the converse is true as well.

\begin{proposition}[{Downey, Hirschfeldt, and LaForte~\cite[Pages 105, 106]{DHL04}}]
\label{proposition:strongly-lc}
Any strongly left-computable and nearly computable number is computable.
\end{proposition}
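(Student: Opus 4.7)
The plan is to reduce the claim directly to Proposition~\ref{proposition:nearly-comp-eq}, specifically the equivalence of nearly-computability of $\alpha$ with the decidability of every prefix-free c.e.\ ``code'' for $\alpha$ (condition~\eqref{proposition:nearly-comp-eq-6}).

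Let $\alpha$ be strongly left-computable and nearly computable. By definition there is a c.e.\ set $A\subseteq\IN$ with $\alpha=0.A=\sum_{n\in A} 2^{-n-1}$. In particular $\alpha\in[0,1]$, and $\alpha$ is left-computable (by the well-known fact mentioned in the paragraph preceding the proposition). So $\alpha$ falls under the hypotheses needed to apply condition~\eqref{proposition:nearly-comp-eq-6} of Proposition~\ref{proposition:nearly-comp-eq}.

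The key step is to encode $A$ as a prefix-free c.e.\ set giving $\alpha$ as its Kraft sum. I would set
\[ C := \{0^n 1 : n \in A\} \subseteq \{0,1\}^* . \]
This set is c.e.\ (as $A$ is c.e.), prefix-free (two distinct strings of the form $0^n1$ and $0^m 1$ with $n\neq m$ differ in the position of the unique $1$, so neither is a prefix of the other), and satisfies
\[ \sum_{w\in C} 2^{-|w|} = \sum_{n\in A} 2^{-(n+1)} = 0.A = \alpha . \]
By Proposition~\ref{proposition:nearly-comp-eq}, condition~\eqref{proposition:nearly-comp-eq-6}, the set $C$ is decidable. Since the map $n\mapsto 0^n 1$ is a computable bijection between $\IN$ and $\{0^n 1 : n\in\IN\}$, decidability of $C$ yields decidability of $A$ (to test $n\in A$, test $0^n 1\in C$). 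Finally, as noted in the paragraph preceding Definition of strong left-computability, $\alpha=0.A$ is computable whenever $A$ is decidable, so $\alpha\in\EC$, as desired.

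There is no real obstacle: the whole argument is a one-line encoding once Proposition~\ref{proposition:nearly-comp-eq}.\eqref{proposition:nearly-comp-eq-6} is available. The only thing to verify carefully is that the encoding $n\mapsto 0^n1$ produces a \emph{prefix-free} set of codewords with the correct Kraft sum, which is immediate. Alternatively, one could avoid condition~\eqref{proposition:nearly-comp-eq-6} and use condition~\eqref{proposition:nearly-comp-eq-5} directly: enumerating $A$ without repetition as $a_0,a_1,\dots$ (assume $A$ infinite, else trivial), set $b_i:=2^{-a_i-1}$; then $(b_i)_i$ is a computable sequence of positive rationals summing to $\alpha$, hence converges computably to $0$, yielding a computable $g$ with $a_i\geq n$ for all $i\geq g(n+1)$, so membership in $A$ up to any level is decidable. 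Either route works; the prefix-free encoding is the shorter of the two.
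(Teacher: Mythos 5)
Your argument is correct, and your ``alternative'' route is in fact the paper's own proof: there one assumes $A$ infinite, enumerates it injectively by a computable $f$, applies condition~\eqref{proposition:nearly-comp-eq-5} of Proposition~\ref{proposition:nearly-comp-eq} to the computable series of positive rationals summing to $0.A$, and uses the resulting computable modulus of convergence to decide $A$ exactly as you describe (your normalization $b_i=2^{-a_i-1}$ is the correct one). Your primary route differs only in which clause of Proposition~\ref{proposition:nearly-comp-eq} it invokes: you encode $A$ as the prefix-free c.e.\ set $\{0^n1 ~:~ n\in A\}$ with Kraft sum $0.A$ and apply condition~\eqref{proposition:nearly-comp-eq-6}. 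This is legitimate --- $0.A\in[0,1]$, and strong left-computability gives left-computability, so the equivalence of \eqref{proposition:nearly-comp-eq-1} and \eqref{proposition:nearly-comp-eq-6} is available --- and it is slightly slicker in that no case distinction on the finiteness of $A$ is needed. The gain is mostly cosmetic, though: in the paper the implication \eqref{proposition:nearly-comp-eq-1}$\Rightarrow$\eqref{proposition:nearly-comp-eq-6} is itself obtained by passing through \eqref{proposition:nearly-comp-eq-5}, and the step \eqref{proposition:nearly-comp-eq-5}$\Rightarrow$\eqref{proposition:nearly-comp-eq-6} is proved by precisely the modulus-of-convergence decidability argument you sketch, so your prefix-free encoding repackages the same work one level higher rather than replacing it.
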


For completeness sake we give the easy proof.

\begin{proof}
Let us consider a computably enumerable set $A \subseteq \IN$ such that the
real number $0.A$ is nearly computable.
We claim that $0.A$ is even computable.
It is sufficient to show that $A$ is decidable.
If $A$ is finite this is clear anyway. So, let us assume that $A$ is infinite. Let
$f:\IN\to\IN$ be a total, injective, computable function with $f(\IN)=A$.
Then the sequence $(b_n)_n$ defined by $b_n:=2^{-f(n)}$, for $n\in\IN$,
is a computable sequence of positive rational numbers with $\sum_{i=0}^\infty b_i=2^{-A}$.
As we assume that $0.A$ is nearly computable, by Proposition~\ref{proposition:nearly-comp-eq}
the sequence $(b_n)_n$ converges computably to $0$.
Let $g:\IN\to\IN$ be a computable modulus of convergence of this sequence.
Remember that this implies that for $m,n\in\IN$ with $m\geq g(n)$ we
have $b_m\leq 2^{-n}$, hence, $f(m) \geq n$.
Hence, a number $n$ is an element of $A$ if, and only if, $n$ is an element of 
$\{f(0),\ldots,f(g(n+1)-1)\}$.
This shows that $A$ is decidable.
\end{proof}

Remember that a real number $\alpha$ is called {\em computably approximable} if
there exists a computable sequence $(q_n)_n$ of rational numbers that converges to $\alpha$,
and that by $\CA$ we denote the set of all computably approximable numbers.

\begin{corollary}
\begin{enumerate}
\item
$\LC \not\subseteq \NC$ and $\NC\not\subseteq \LC$.
\item
$\NC\subsetneq \CA$.
\end{enumerate}
\end{corollary}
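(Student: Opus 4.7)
The plan is to package previously established results; no genuinely new argument is needed. I will combine Proposition~\ref{proposition:strongly-lc}, Theorem~\ref{theorem:downey-laforte}, Corollary~\ref{cor:NC-real-closed-field}, and the lemma characterizing $\EC$ as $\{x\in\IR ~:~ x,-x\in\LC\}$.

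For $\LC\not\subseteq\NC$, I would exhibit a noncomputable strongly left-computable real. Concretely, pick any c.e.\ set $A\In\IN$ that is not decidable and put $\alpha:=0.A$. Then $\alpha$ is strongly left-computable and hence left-computable, but $\alpha\notin\EC$ because the equivalence ``$0.A\in\EC\Longleftrightarrow A$ decidable'' noted just before Proposition~\ref{proposition:strongly-lc} would otherwise force $A$ to be decidable. If we had $\alpha\in\NC$, Proposition~\ref{proposition:strongly-lc} would give $\alpha\in\EC$, a contradiction. Hence $\alpha\in\LC\setminus\NC$.

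For $\NC\not\subseteq\LC$, I would apply Theorem~\ref{theorem:downey-laforte} to obtain some $\alpha\in (\LC\cap\NC)\setminus\EC$, and then argue that $-\alpha\in\NC\setminus\LC$. Since $\NC$ is a field (Corollary~\ref{cor:NC-real-closed-field}), $-\alpha\in\NC$. If in addition $-\alpha\in\LC$, then both $\alpha$ and $-\alpha$ would be left-computable, which by the equivalence in the lemma after the definition of $\LC$ would give $\alpha\in\EC$, contradicting the choice of $\alpha$. Therefore $-\alpha$ is a nearly computable real that is not left-computable.

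For part~(2), the inclusion $\NC\In\CA$ is immediate from the definitions: by definition every nearly computable real is the limit of a computable sequence of rationals, hence is computably approximable. Properness is witnessed by the same $\alpha=0.A$ used in the first paragraph: since $\LC\In\CA$ (a strictly increasing computable sequence of rationals is in particular a computable sequence of rationals) we have $\alpha\in\CA$, but we have just shown $\alpha\notin\NC$, so $\NC\subsetneq\CA$. There is no real obstacle in this proof; the whole point of the corollary is to record these separations as direct consequences of the theorems already proved in the section.
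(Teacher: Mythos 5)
Your proposal is correct and follows essentially the same route as the paper: the witness $0.A$ for an undecidable c.e.\ set $A$ together with Proposition~\ref{proposition:strongly-lc} gives $\LC\not\subseteq\NC$ and $\NC\subsetneq\CA$, and negating the Downey--LaForte witness from Theorem~\ref{theorem:downey-laforte} gives $\NC\not\subseteq\LC$. The only cosmetic difference is that you justify $-\alpha\in\NC$ via the field structure (Corollary~\ref{cor:NC-real-closed-field}) while the paper cites Proposition~\ref{prop:ncc-seq-vectorspace} directly; both are valid.
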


\begin{proof}
Let $K\subseteq \IN$ be a computably enumerable set that is not decidable.
Then the number $0.K$ is strongly left-computable, hence also left-computable, but not computable.
By Proposition~\ref{proposition:strongly-lc} it is not nearly computable.
This shows $\LC \not\subseteq \NC$.

The inclusion $\NC\subseteq \CA$ is clear.
As any left-computable number is computably approximable, 
the number $0.K$ is an element of $\CA$ but not of $\NC$.
Hence, $\NC\subsetneq \CA$.

By Theorem~\ref{theorem:downey-laforte} there exists a real number $\alpha$
that is left-computable and nearly computable but not computable.
Then the number $-\alpha$ is not left-computable (otherwise $\alpha$ would even be computable)
but  nearly computable (this follows from Proposition~\ref{prop:ncc-seq-vectorspace}).
This shows $\NC\not\subseteq \LC$.
\end{proof}

Let us deduce some observations concerning the possible positions of the elements of the following set
\[ \Theta:=\left\{A \subseteq \IN ~:~ 0.A \text{ is a nearly computable number}\right\} \]
in the arithmetical hierarchy.
Let $\Sigma_n^0$, $\Pi_n^0$, and $\Delta_n^0$ be the usual classes of the arithmetical
hierarchy; see, e.g., \cite{Soa87}.

\begin{corollary}
\begin{enumerate}
\item
$\Sigma_0^0 = \Pi_0^0 = \Delta_0^0 =\{A \subseteq \IN ~:~ A \text{ is decidable}\} \subsetneq \Theta$.
\item
$\Sigma_1^0 \cap \Theta = \Pi_1^0 \cap \Theta = \Sigma_0^0$.
\item
$\Theta \subsetneq  \Delta_2^0$.
\end{enumerate}
\end{corollary}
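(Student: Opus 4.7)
The plan is to use throughout the well-known equivalence ``$0.A$ is computable iff $A$ is decidable'', and to combine it with Proposition~\ref{proposition:strongly-lc} (strongly left-computable plus nearly computable implies computable) together with Theorem~\ref{theorem:downey-laforte}, which supplies the separating witness.

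For item~1, the identification $\Sigma_0^0=\Pi_0^0=\Delta_0^0=\{A\subseteq\IN : A \text{ decidable}\}$ is by definition of the bottom level of the arithmetical hierarchy; and for any decidable $A$, $0.A$ is computable and hence nearly computable by Proposition~\ref{prop:comp-nearly-comp}, giving $\Sigma_0^0\subseteq\Theta$. For the strict inclusion I would invoke Theorem~\ref{theorem:downey-laforte}: pick $\alpha\in(\LC\cap\NC)\setminus\EC$ in $[0,1]$ (as supplied by the Downey--LaForte construction cited there) and any $A\subseteq\IN$ with $0.A=\alpha$; then $A\in\Theta$, but $A$ cannot be decidable, since otherwise $0.A$ would be computable.

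For item~2, the containments $\Sigma_0^0\subseteq\Sigma_1^0\cap\Theta$ and $\Sigma_0^0\subseteq\Pi_1^0\cap\Theta$ are immediate. For the reverse, if $A\in\Sigma_1^0\cap\Theta$, then $A$ is c.e., so $0.A$ is strongly left-computable; combined with $0.A\in\NC$, Proposition~\ref{proposition:strongly-lc} forces $0.A\in\EC$, whence $A$ is decidable. If instead $A\in\Pi_1^0\cap\Theta$, then $\IN\setminus A$ is c.e., while $0.(\IN\setminus A)=1-0.A$ is nearly computable by Corollary~\ref{cor:NC-real-closed-field}; so $\IN\setminus A\in\Sigma_1^0\cap\Theta$, and is therefore decidable by the case just handled, and hence so is $A$.

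For item~3, given $A\in\Theta$ I would split into cases. If $0.A$ is a dyadic rational, then $0.A$ is computable, so $A$ is decidable and hence in $\Delta_2^0$. Otherwise the binary expansion of $0.A$ is unique, and since $\NC\subseteq\CA$ there is a computable sequence $(q_n)_n$ of rationals converging to $0.A$; for each $k$, the $k$-th binary bit of $q_n$ eventually stabilizes to $\chi_A(k)$, which by Shoenfield's limit lemma places $A$ in $\Delta_2^0$. For the strictness $\Theta\subsetneq\Delta_2^0$, take $A=K$, the halting set: $K\in\Sigma_1^0\subseteq\Delta_2^0$, but $0.K$ is strongly left-computable and not computable, so Proposition~\ref{proposition:strongly-lc} forbids $0.K\in\NC$, i.e., $K\notin\Theta$. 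The main subtlety I anticipate is the dyadic-rational ambiguity in item~3 (two different sets can yield the same $0.A$); the clean remedy is to peel off the dyadic case up front, where computability of $0.A$ alone forces decidability of $A$.
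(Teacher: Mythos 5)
Your proposal is correct. Items 1 and 2 are essentially the paper's own argument: strictness in item 1 via the Downey--LaForte witness from Theorem~\ref{theorem:downey-laforte}, the $\Sigma_1^0$ case of item 2 via Proposition~\ref{proposition:strongly-lc}, and the $\Pi_1^0$ case by passing to the complement and using that $0.(\IN\setminus A)=1-0.A$ remains nearly computable (the paper uses the same trick, leaving the closure of $\NC$ under $x\mapsto 1-x$ implicit where you cite Corollary~\ref{cor:NC-real-closed-field}). In item 3 you diverge from the paper: the paper obtains $\Theta\subseteq\Delta_2^0$ by citing Ho's characterization that $A\in\Delta_2^0$ iff $0.A\in\CA$, and gets strictness from $\NC\subsetneq\CA$ established in the preceding corollary; you instead prove the containment directly, peeling off the dyadic-rational case and then applying the limit lemma to the stabilizing bits of a computable rational approximation, and you exhibit the explicit witness $K$ for strictness (which is the same witness hiding inside the paper's appeal to $\NC\subsetneq\CA$). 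Your route is more self-contained, using only the easy direction of Ho's equivalence and handling the binary-expansion ambiguity explicitly; the paper's route is shorter by outsourcing both directions to the cited result. Both are sound.
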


\begin{proof}
\begin{enumerate}
\item
On the one hand, remember that for $A\in\IN$ the number $0.A$ is computable iff $A$ is decidable
and that any computable number is nearly computable.
On the other hand, by Theorem~\ref{theorem:downey-laforte} there exists a real number that is 
nearly computable but not computable.
\item
The equality $\Sigma_1^0 \cap \Theta = \Sigma_0^0$
follows directly from Proposition~\ref{proposition:strongly-lc}.
The inclusion $\Sigma_0^0 \subseteq \Pi_1^0 \cap \Theta$ follows from the first assertion.
For the inclusion $\Pi_1^0 \cap \Theta \subseteq \Sigma_0^0$ let us consider
a set $A\in \Pi_1^0 \cap \Theta$. Then the set $B:=\IN\setminus A$
is an element of $\Sigma_1^0$, and the number $0.B=1-0.A$
is nearly computable. We conclude $B\in\Sigma_1^0\cap \Theta=\Sigma_0^0$, hence,
$A\in\Sigma_0^0$.
\item
This follows from $\NC\subsetneq \CA$ and from the fact that
a set $A\subseteq \IN$ is an element of $\Delta_2^0$ if, and only if,
the number $0.A$ is computably approximable; see Ho~\cite{Ho99}.
\qedhere
\end{enumerate}
\end{proof}

The second observation of the kind described above
(that asking a real number to be left-computable and nearly computable and
to satisfy some other effectivity condition forces the number to be computable)
that we wish to mention is the following result by Wu~\cite{Wu01}.

\begin{proposition}[{Wu~\cite{Wu01}}]
There exists an undecidable, c.e. set $B \subseteq \IN$ such that
every left-computable and nearly computable number $\alpha$ with 
$\alpha \leqT B$ is computable.
\end{proposition}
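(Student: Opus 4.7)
The plan is to construct $B$ as a promptly simple c.e.\ set via the standard finite-injury priority construction. Since every promptly simple set is simple, and hence coinfinite and undecidable, the undecidability clause is automatic, and the whole substance lies in verifying that no left-computable, nearly computable, non-computable real can be Turing reducible to a promptly simple set. This verification amounts essentially to the left-computable case of the main theorem of Section~\ref{section:promptly-simple}, and should be approached by specializing that section's argument.

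For the verification, suppose $\alpha$ is left-computable via a strictly increasing computable rational sequence $(q_n)_n$, is nearly computable, and satisfies $\alpha=\Phi^B$ for some Turing functional $\Phi$ producing a fast Cauchy name of $\alpha$. By the equivalence of conditions \ref{proposition:nearly-comp-eq-1} and \ref{proposition:nearly-comp-eq-4} of Proposition~\ref{proposition:nearly-comp-eq}, the successive-difference sequence $(q_{n+1}-q_n)_n$ converges computably to $0$; fix a computable modulus of convergence $g$. The aim is to deduce, under the assumption that $\alpha$ is not computable, a non-prompt phenomenon in $B$, contradicting prompt simplicity. Concretely, I would enumerate, uniformly in $n$, a c.e.\ set $V_n$ consisting of those stages $s$ at which $\Phi^{B_s}(n+2)$ converges and agrees with the current $q$-approximation $q_{g(n+2)}[s]$ to within the expected tolerance, but at which $B$ subsequently changes below the use $u(n+2,s)$; non-computability of $\alpha$ would force $V_n$ to be infinite for infinitely many $n$, since otherwise one could read $\alpha$ off $(q_n)_n$ computably and uniformly in $n$.

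Applying prompt simplicity to these infinite $V_n$ yields, for some sufficiently large $n$, a prompt $B$-enumeration of an element of $V_n$. Each such prompt $B$-change is then translated into a jump in the $q$-sequence larger than the modulus $g$ permits, contradicting the fact that $g$ is a computable modulus of convergence for $(q_{n+1}-q_n)_n$ to $0$. The main obstacle is the quantitative bookkeeping in this last step: the use function $u$ of $\Phi$, the precision $2^{-n}$, the modulus $g$, and the prompt-simplicity witness function must be calibrated so that every prompt entry into $B$ really does force a $q$-jump beyond what $g$ allows. This is essentially the combinatorial core of Downey--LaForte's original ``no-promptly-simple-set-below'' argument; once it is executed, $B$ is the required witness.
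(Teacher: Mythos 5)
Your plan rests on a reversal of the direction of the Turing reduction, and this is fatal. The promptly-simple machinery of Section~\ref{section:promptly-simple} (Proposition~\ref{proposition:DLF-promptly-simple} and Theorem~\ref{theorem:promptlysimple}) says that no promptly simple set $A$ satisfies $A \leqT \beta$ for a nearly computable $\beta$: the reduction goes \emph{from the set into the real}, and the argument works precisely because a prompt enumeration into $A$ can be translated, via the reduction $A\leqT\beta$, into a large jump of the approximating sequence of $\beta$. Wu's statement is about the opposite cone: one needs a c.e.\ set $B$ such that every left-computable, nearly computable real $\alpha$ with $\alpha \leqT B$ is computable. Making $B$ promptly simple gives no control whatsoever over which reals lie Turing \emph{below} $B$. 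In fact it provably cannot work: as the paper itself recalls, the promptly simple degrees are upwards closed within the c.e.\ degrees, so the complete degree $\mathbf{0}'$ contains a promptly simple set $B$; but every left-computable real is $\Delta_2^0$ and hence $\leqT B$ for such a $B$, in particular the non-computable, left-computable, nearly computable real of Theorem~\ref{theorem:downey-laforte} lies below $B$. So your candidate $B$ fails the required property, and your ``verification'' step — deducing a non-prompt phenomenon in $B$ from $\alpha=\Phi^B$ — has no analogue of the Downey--LaForte translation to rely on, since prompt simplicity constrains how $B$ captures elements of c.e.\ sets, not how $B$ changes below the uses of computations it serves as an oracle for.

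There is also a mismatch in the expected difficulty: the paper states that Wu's proof is a complicated $\emptyset'''$-priority argument, i.e.\ one must genuinely build $B$ by an elaborate construction that actively defeats, for every Turing functional $\Phi$ and every candidate monotone approximation, the possibility that $\Phi^B$ is a non-computable real with the near-computability property (using something like the characterization in Proposition~\ref{proposition:nearly-comp-eq}), while simultaneously keeping $B$ undecidable. This cannot be obtained by a finite-injury promptly simple construction combined with the results of Section~\ref{section:promptly-simple}; those results are the strengthening of Downey--LaForte in the other direction and are logically independent of what is needed here.
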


Here $\leqT$ denotes Turing reduction; see Soare~\cite{Soa87}.
A set $A\subseteq \IN$ is {\em Turing reducible} to a set $B\subseteq \IN$, written $A \leqT B$,
if there exists an oracle Turing machine which with oracle $B$ computes $A$.
By $\alpha \leqT B$ for a real number $\alpha$ we mean $A \leqT B$ where $A\subseteq\IN$
is the unique coinfinite set with $(\alpha - 0.A)\in\IZ$.
The proof of Wu's result is a complicated $\emptyset'''$-priority argument.

\section{Nearly Computable Numbers, a Genericity Notion and Randomness Notions}
\label{section:nearly-computable-randomness}

The third observation of the kind described in the previous section
(that asking a real number to be left-computable and nearly computable and
to satisfy some other effectivity condition forces the number to be computable)
that we wish to mention is the following result by Hoyrup~\cite{Hoy17}.

\begin{proposition}[{Hoyrup~\cite[Prop.~4.3.2]{Hoy17}}]
\label{prop:Hoy17}
If a real number is left-computable and nearly computable 
then it is either computable or generic from the right.
\end{proposition}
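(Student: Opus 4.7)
The plan is to argue by contrapositive. Assume $\alpha\in\LC\cap\NC$ is not computable; I aim to show $\alpha$ is generic from the right, which for a left-computable real I take to mean that $\alpha\in V$ for every c.e.\ open set $V\subseteq\IR$ that meets $(\alpha,\alpha+2^{-n})$ for every $n$. By Proposition~\ref{proposition:nearly-comp-eq} I would first fix a strictly increasing computable sequence $(a_n)_n$ of rationals with $a_n\to\alpha$ such that $(a_{n+1}-a_n)_n$ converges computably to $0$, with some computable modulus $g$. Suppose for contradiction that there is a c.e.\ open $V$ that is right-dense at $\alpha$ but with $\alpha\notin V$, and write $V=\bigcup_i(p_i,q_i)$ via a computable enumeration of rational open intervals.

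The driving geometric observation is that since $\alpha\notin V$ but $V$ is right-dense at $\alpha$, any interval $(p_i,q_i)\subset V$ with $q_i>\alpha$ must satisfy $p_i\geq\alpha$, and the collection of such left endpoints has infimum exactly $\alpha$. So for every $n$ there are infinitely many intervals of $V$ with $p_i\in[\alpha,\alpha+2^{-n})$. From the data $(a_n)_n$ the predicate ``$p_i<\alpha$'' is c.e.\ (witnessed by finding some $a_m>p_i$), whereas ``$p_i\geq\alpha$'' is merely co-c.e.; these ``persistent'' left endpoints are precisely those never exceeded by the sequence, and their infimum is $\alpha$.

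The crux is to use this asymmetry, together with the nearly computably Cauchy control of $(a_n)_n$, to construct a computable strictly increasing $r:\IN\to\IN$ such that $(a_{r(n+1)}-a_{r(n)})_n$ fails to converge computably to $0$, contradicting condition (I.C) of Lemma~\ref{lemma:ncc-basic-2}. Concretely, at stage $n$ I would search the enumeration of $V$ for an interval $(p_i,q_i)$ whose left endpoint is currently uncertified below $\alpha$ and lies in a target window above $a_{r(n)}$ chosen using $g$; then let $r(n+1)$ be the least later index at which $(a_m)$ either crosses $p_i$ or else provably stays below it, selecting in either case the variant that enforces a jump $a_{r(n+1)}-a_{r(n)}$ of predetermined size. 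The density of persistent intervals above $\alpha$ supplies infinitely many such opportunities, and if $(a_{r(n+1)}-a_{r(n)})_n$ \emph{did} converge computably to $0$ via some modulus $h$, I would hope to combine $h$ with $g$ and the enumeration of $V$ to manufacture a computable upper-approximation of $\alpha$, contradicting non-computability.

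The main obstacle, as usual for arguments of this kind, is that at no finite stage can we certify $p_i\geq\alpha$, so the selection rule must yield a correct computable $r$ regardless of how the persistence of the chosen intervals resolves. I expect to handle this by running the requirement above in parallel, one copy for each prospective computable modulus, and showing that the combination of right-density and non-computability forces at least one copy to defeat its assigned modulus — an arrangement in the spirit of the infinite-injury construction of Downey and LaForte~\cite{DL02} cited in Theorem~\ref{theorem:downey-laforte}.
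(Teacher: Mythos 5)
There is a genuine gap, and it sits exactly where you flag it. The stage-by-stage definition of $r$ is not computable as described: the instruction ``let $r(n+1)$ be the least later index at which $(a_m)$ either crosses $p_i$ or else provably stays below it'' has no effective content in its second branch. When $p_i\geq\alpha$ the increasing sequence never crosses $p_i$, and there is no finite-stage certificate that it never will -- that is precisely the co-c.e.\ fact ``$p_i\geq\alpha$'' you yourself point out -- so the search need never halt and $r$ need not be total. Moreover, even granting totality, in the ``stays below'' case there is no jump at all, so the claimed dichotomy ``in either case a jump of predetermined size'' fails. You defer the repair to an infinite-injury construction ``in the spirit of Downey and LaForte,'' but that deferred construction is exactly the missing mathematical content: no requirements are formulated, no strategies or injury analysis are given, and there is no verification that either some copy defeats its assigned modulus or else $\alpha$ becomes computable. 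As written, the proposal is a plan with the central difficulty unresolved, not a proof.

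For comparison: the paper does not reprove this proposition (it cites Hoyrup), but its own proof of the stronger Theorem~\ref{theorem:nearly-computable-weakly-1-generic} shows the intended mechanism, and it is direct, with no priority machinery and no attempt to defeat all prospective moduli. One assumes near computability and $\alpha\notin U$, passes to a computable subsequence $(z_n)_n$ of the approximation that avoids the closures $\overline{B}_{f(0)},\ldots,\overline{B}_{f(n-1)}$ of the first $n$ listed basic intervals of $U$ (Lemma~\ref{lemma:nearly-computably-convergent-1} keeps it nearly computably Cauchy), and takes a computable modulus for $(|z_{n+1}-z_n|)_n$. Beyond the modulus point the steps are smaller than the length of a fixed interval, and since the subsequence never enters that closed interval, the interval lies entirely on one side of all later terms, hence on one side of $\alpha$ -- and which side is computable. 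In your setting (increasing approximation, $U$ accumulating at $\alpha$ from the right, $\alpha\notin U$), the intervals labelled ``right'' have left endpoints accumulating at $\alpha$ and so yield computable upper bounds, while the increasing sequence yields lower bounds; hence $\alpha$ is computable, contradicting the hypothesis. This side-determination argument is exactly the precise version of your sentence ``I would hope to combine $h$ with $g$ and the enumeration of $V$ \dots''; once you carry it out, the entire first branch (constructing $r$ to defeat every modulus) and the priority scaffolding become unnecessary. I recommend rewriting the proof along those lines.
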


\begin{corollary}[{Hoyrup~\cite{Hoy17}}]
\label{cor:Hoy17}
If a real number is left-computable and nearly computable 
then it is either computable or weakly $1$-generic.
\end{corollary}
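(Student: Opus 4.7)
The plan is to derive the corollary directly from Proposition~\ref{prop:Hoy17} by checking that any real that is ``generic from the right'' is automatically weakly $1$-generic, without needing any further computability hypothesis. So the work is purely topological/definitional once the proposition is granted.

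Recall that $\alpha \in \IR$ is weakly $1$-generic iff $\alpha \in U$ for every c.e.\ open set $U\In\IR$ that is topologically dense in $\IR$, whereas ``generic from the right,'' in the sense of \cite{Hoy17}, asks that $\alpha \in U$ for every c.e.\ open $U\In\IR$ with the weaker property that $U\cap (\alpha,\alpha+\varepsilon)\neq\emptyset$ for every $\varepsilon>0$. I would begin by fixing a dense c.e.\ open set $U\In\IR$ and observing that, because $U$ is dense in $\IR$, the nonempty open interval $(\alpha,\alpha+\varepsilon)$ must meet $U$ for every $\varepsilon>0$; thus the hypothesis of the right-sided property is verified for $U$. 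This single observation, applied to every dense c.e.\ open set, shows that being generic from the right implies being weakly $1$-generic for \emph{any} real $\alpha$.

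Combining this with Proposition~\ref{prop:Hoy17} finishes the argument: if $\alpha$ is left-computable and nearly computable then either $\alpha$ is computable, or $\alpha$ is generic from the right and hence, by the observation above, weakly $1$-generic. Note that left-computability and near computability of $\alpha$ enter only through the invocation of Proposition~\ref{prop:Hoy17}; the step from ``generic from the right'' to weakly $1$-generic uses only the density of the ambient c.e.\ open set.

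There is no real obstacle here; the only point that requires mild care is matching the formulation of ``generic from the right'' used by Hoyrup. Under the standard formulation just recalled (density of the c.e.\ open witness on arbitrarily small right neighbourhoods of $\alpha$), the reduction is a one-line topological observation. Under a symmetric two-sided formulation the same argument works with right neighbourhoods replaced by either two-sided punctured neighbourhoods, again using density of $U$ to produce points of $U$ arbitrarily close to $\alpha$ on the required side.
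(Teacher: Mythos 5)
Your proposal is correct and matches the paper's own argument: the paper likewise deduces the corollary directly from Proposition~\ref{prop:Hoy17} together with the observation (cited there as Hoyrup's Prop.~4.3.1, and proved exactly by your density remark) that every real which is generic from the right is weakly $1$-generic, using the same right-sided formulation of genericity from the right that you recall.
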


Note that the assertion in the corollary is equivalent to the following: 
If a real number is left-computable, nearly computable, and not weakly $1$-generic
then it is computable.

Let us explain the notions appearing in these assertions.
A subset $U\subseteq\IR$ of the real line is called {\em effectively open}
if there exists a c.e.~set $A\subseteq\IN$ such that
$U=\bigcup_{n\in A} B^{(1)}_n$.
Hoyrup~\cite{Hoy17} calls a real number $x$ {\em generic from the right} 
if for every effectively open set $U\subseteq \IR$ either $x\in U$
or there exists a number $\varepsilon>0$ with $[x,x+\varepsilon) \cap U = \emptyset$.
A real number $x$ is called {\em weakly $1$-generic} 
if it is an element of every dense effectively open subset $U\subseteq \IR$~\cite{Kur83}.
Clearly, every real number that is generic from the right is
weakly $1$-generic as well~\cite[Prop.~4.3.1]{Hoy17}. 
Hence, Corollary~\ref{cor:Hoy17} follows directly from Proposition~\ref{prop:Hoy17}.
Note that a computable number $x$ cannot be weakly $1$-generic
because for a computable number $x$ the set $\IR\setminus\{x\}$
is a dense and effectively open subset of $\IR$.
One cannot delete the assumption that the real number should
be left-computable from Proposition~\ref{prop:Hoy17}
because by Theorem~\ref{theorem:downey-laforte} and Corollary~\ref{cor:NC-real-closed-field}
there exist right-computable numbers that are nearly
computable but not computable (a real number $x$ is called {\em right-computable}
if $-x$ is left-computable), and a right-computable number cannot be
generic from the right (because for a right-computable number $x$ the set $(x,\infty)$ is an effectively open set).
But one can delete this assumption from Corollary~\ref{cor:Hoy17}.

\begin{theorem}
\label{theorem:nearly-computable-weakly-1-generic}
If a real number is nearly computable 
then it is either computable or weakly $1$-generic.
\end{theorem}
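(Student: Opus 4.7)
Assume for contradiction that $\alpha$ is nearly computable, not computable, and not weakly $1$-generic. Let $(a_n)_n$ be a computable nearly computably Cauchy sequence of rationals with $a_n\to\alpha$, and let $U$ be a dense effectively open subset of $\IR$ with $\alpha\notin U$. Since the rationals are computable, $\alpha$ is irrational, and hence for every rational open interval $I=(p,q)\subseteq U$ (these can be c.e.~enumerated) we have $\alpha<p$ or $\alpha>q$ but $\alpha\notin\{p,q\}$.

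The plan is a case analysis on the position of $(a_n)_n$ relative to $\alpha$. In the first case, $a_n\leq\alpha$ for all sufficiently large $n$: fix any such $N_0$; the computable nondecreasing sequence $b_n:=\max_{N_0\leq k\leq N_0+n} a_k$ is bounded above by $\alpha$ and tends to $\alpha$, so $\alpha\in\LC$ (only the existence, not the effective availability, of $N_0$ matters). Combined with $\alpha\in\NC\setminus\EC$, Corollary~\ref{cor:Hoy17} yields the desired contradiction. The second case, where $a_n\geq\alpha$ eventually, reduces to the first case applied to $-\alpha$: by Corollary~\ref{cor:NC-real-closed-field} this is nearly computable, and weak $1$-genericity is preserved under the map $x\mapsto -x$ since that map sends dense effectively open sets to dense effectively open sets.

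The principal case is the third: $(a_n)_n$ has infinitely many terms strictly below $\alpha$ and infinitely many strictly above. Here, for each rational interval $I=(p,q)\subseteq U$, the computable function $r_I:\IN\to\IN$ enumerating $\{m:a_m\notin(p,q)\}$ is total and strictly increasing (because $\alpha\notin\overline{I}$ forces $a_m\notin I$ eventually), so by the nearly computably Cauchy hypothesis the sequence $(a_{r_I(n+1)}-a_{r_I(n)})_n$ converges computably to $0$. A short inductive argument using that the gap $q-p>0$ eventually exceeds consecutive differences then shows that $(a_{r_I(m)})_m$ is eventually on a single side of $I$, namely the side on which $\alpha$ lies. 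Combining this side information across a dense family of intervals of $U$ on both sides of $\alpha$ (which exist by density of $U$ and $\alpha\notin U$) with the two-sided oscillation furnished by Case~3 should allow the construction of a single computable strictly increasing $r$ along which $(a_{r(n+1)}-a_{r(n)})_n$ fails to converge computably to $0$, contradicting the nearly computably Cauchy hypothesis on $(a_n)_n$.

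The hard part will be exactly this third case. The obstruction is that the modulus of convergence of $(a_{r_I(n+1)}-a_{r_I(n)})_n\to 0$ is not uniformly computable in $I$, so one cannot naively c.e.~enumerate the correct sides of all intervals and conclude that $\alpha$ is both left- and right-computable. The Case~3 construction must instead exploit the two-sided oscillation to extract enough effective information from the dense family of intervals of $U$ to produce the desired bad computable subsequence of $(a_n)_n$ without relying on uniformity of the modulus.
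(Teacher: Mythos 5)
Your Cases 1 and 2 are correct (and since Corollary~\ref{cor:Hoy17} is an external result of Hoyrup there is no circularity in invoking it), but the proof has a genuine gap exactly where you locate the difficulty: Case 3 is never carried out, and the sketch does not contain the idea needed to close it. Knowing that for each rational interval $I\subseteq U$ the subsequence $(a_{r_I(m)})_m$ is \emph{eventually} on $\alpha$'s side of $I$ is of no use as long as you cannot compute from $I$ an index past which this holds; that is precisely the non-uniformity you acknowledge, and ``exploiting the two-sided oscillation'' is not a mechanism --- it is not even clear that oscillation of the witness on both sides of $\alpha$ is incompatible with being nearly computably Cauchy, so there is no reason the hoped-for bad subsequence $r$ should exist. (In the paper's argument the final contradiction is not a failure of computable convergence along some $r$, but the computability of $\alpha$ itself.)

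The missing idea, which also makes your case split unnecessary, is a uniformization step. From the given witness $(a_n)_n$ one first extracts a computable subsequence $(z_n)_n$ (still converging to $\alpha$ and still nearly computably Cauchy by Lemma~\ref{lemma:nearly-computably-convergent-1}) such that $z_n$ avoids the \emph{closures} of the first $n$ basic intervals $B^{(1)}_{f(0)},\ldots,B^{(1)}_{f(n-1)}$ of $U$; the search for such terms terminates because $\alpha\notin U$ and $\alpha$ is irrational, hence $\alpha$ avoids these closures, and $(a_n)_n$ converges to $\alpha$. Now a \emph{single} computable modulus $g$ of convergence of $(|z_{n+1}-z_n|)_n$ to $0$ (obtained by applying the nearly computably Cauchy property to the identity) serves uniformly for all basic intervals: for the $n$-th interval, of radius $2^{-\pi_2(f(n))}$, every $z_m$ with $m\geq\max\{n+1,\,g(\pi_2(f(n)))\}$ lies outside the closed interval while consecutive steps are at most its radius, so the tail of $(z_m)_m$ can never cross that interval; hence the side on which $z_{\max\{n+1,g(\pi_2(f(n)))\}}$ lies is computable from $n$ and coincides with the side on which $\alpha$ lies. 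Since $U$ is dense, these labels allow one to compute $\alpha$ to arbitrary precision, contradicting noncomputability. Without this thinning-plus-single-modulus device (or a genuine substitute for it), your Case 3 remains open, so the proposal as it stands does not prove the theorem.
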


\begin{proof}
For the sake of a contradiction,
let us assume that there exists a real number $y$ that is nearly computable
but neither computable nor weakly $1$-generic.
Let $U\subseteq \IR$ be a dense, effectively open set with $y\not\in U$.
There exists a total computable function $f:\IN\to\IN$ such that $U=\bigcup_{n\in\IN} B^{(1)}_{f(n)}$.
Let $(x_n)_n$ be a computable sequence of rational numbers that converges to $y$
and is nearly computably Cauchy.

We define a strictly increasing computable function $s:\IN\to\IN$
as follows. We set $s(0):=0$. Let us consider a number $n>0$
and let us assume that we have fixed the values $s(0)< \ldots < s(n-1)$.
Now we search for a number $m>s(n-1)$ such that
\[  x_m \not\in \bigcup_{i=0}^{n-1} \overline{B}_{f(i)} , \]
where $\overline{B}_n$ is the closure of $B^{(1)}_n$, that is,
$\overline{B}_n=[\nu_\IQ \pi_1(n) - 2^{-\pi_2(n)},\nu_\IQ \pi_1(n) - 2^{-\pi_2(n)}]$,
for all $n\in\IN$.
Such a number $m$ exists because $y\not\in U$ and $y$ is not rational, hence,
$y\not\in \bigcup_{i=0}^{n-1} \overline{B}_{f(i)}$,
and because the sequence $(x_k)_k$ converges to $y$.
Then we set $s(n):=m$.

The sequence $(z_n)_n$ defined by $z_n:=x_{s(n)}$ 
is a computable sequence of rational numbers 
that converges to $y$ and is nearly computably Cauchy
by Lemma~\ref{lemma:nearly-computably-convergent-1}.
And it has the property that
\[ z_n \not\in \bigcup_{i=0}^{n-1} \overline{B}_{f(i)} , \]
for all $n\in\IN$.
Let $g:\IN\to\IN$ be a computable modulus of convergence of the sequence $(|z_{n+1}-z_n|)_n$.
Then, for all $m,n$ with $m\geq g(n)$, we have $|z_{m+1}-z_m| \leq 2^{-n}$.
We define a computable function $\mathit{label}:\IN\to\{\mathrm{left},\mathrm{right}\}$ by
$r(n):=\max\{n+1,g(\pi_2(f(n)))\}$ and
\[ \mathit{label}(n):=\begin{cases}
     \mathrm{left} & \text{if } \nu_\IQ(\pi_1(f(n))) < z_{r(n)}, \\
     \mathrm{right} & \text{if } z_{r(n)} < \nu_\IQ(\pi_1(f(n))), 
    \end{cases}
\]
for $n\in\IN$ 
(note that $\nu_\IQ(\pi_1(f(n)))$ is the midpoint of the closed interval $\overline{B}_{f(n)}$
and $r(n)>n$, hence, $z_{r(n)} \not\in \overline{B}_{f(n)}$, for all $n$).
Now note that, for all $m\geq r(n)$,
on the one hand $z_m\not\in \overline{B}_{f(n)}$,
on the other hand, 
$|z_{m+1} - z_m|\leq 2^{-\pi_2(f(n))}$ and the length of the interval $\overline{B}_{f(n)}$
is equal to $2\cdot 2^{-\pi_2(f(n))}$.
Hence, for all $n$ with $\mathit{label}(n)=\mathrm{left}$ 
and for all $m\geq r(n)$ the interval $\overline{B}_{f(n)}$ is to the left of $z_m$.
We conclude that for all $n$ with $\mathit{label}(n)=\mathrm{left}$
this interval is to the left of $y$.
Similarly, for all $n$ with $\mathit{label}(n)=\mathrm{right}$
the interval $\overline{B}_{f(n)}$ is to the right of $y$.
As the set $U$ is a dense subset of $\IR$, we can use the function $\mathit{label}$ in order
to compute $y$ with arbitrary precision, hence, $y$ is computable. Contradiction.
\end{proof}

For the notions ``hyperimmune'' and ``Martin-L{\"o}f random'',
that appear in the following corollary, the reader is referred to~\cite{Nie09,DH10}.

\begin{corollary}
\label{theorem:hyperimmune-stronger}
\begin{enumerate}
\item
If a real number is nearly computable then it is either computable or hyperimmune.
\item
If a real number is nearly computable then it is not Martin-L{\"o}f random.
\end{enumerate}
\end{corollary}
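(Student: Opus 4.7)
The plan is to deduce both assertions directly from Theorem \ref{theorem:nearly-computable-weakly-1-generic}, using two classical facts about weakly $1$-generic reals that are quoted (and cited) in the paragraph right before the corollary. So essentially there is no new combinatorial work to do; the corollary is a packaging step that bundles together the theorem with known facts about the genericity notion.

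For assertion~1, I would argue as follows. Let $\alpha$ be nearly computable. By Theorem~\ref{theorem:nearly-computable-weakly-1-generic}, $\alpha$ is either computable or weakly $1$-generic. In the first case there is nothing to prove. In the second case, the classical result of Kurtz~\cite{Kur83} (already invoked in the introduction of the paper) says that every weakly $1$-generic real is hyperimmune, so $\alpha$ is hyperimmune.

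For assertion~2, the same dichotomy applies. If $\alpha$ is computable then $\alpha$ is trivially not Martin-L\"of random, because the singleton $\{\alpha\}$ is $\Pi^0_1$ and of measure zero, so $\alpha$ lies in a Martin-L\"of null class. If instead $\alpha$ is weakly $1$-generic, one may invoke the standard fact that no weakly $1$-generic real is Martin-L\"of random. (The underlying reason: for each constant $c$, the class $U_c:=\{x : (\exists n)\, K(x\upharpoonright n) < n-c\}$ is a dense effectively open subset of $\IR$, so a weakly $1$-generic real lies in every $U_c$ and therefore fails the Chaitin criterion for ML-randomness; this is treated in standard references such as \cite{Nie09,DH10}.)

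In both parts, the only nontrivial ingredient is Theorem~\ref{theorem:nearly-computable-weakly-1-generic}, and there is no real obstacle in the corollary itself; the main thing to be careful about is that the two cited facts about weakly $1$-generic reals are stated as about arbitrary weakly $1$-generic reals, not just those that happen to be left-computable. Since Theorem~\ref{theorem:nearly-computable-weakly-1-generic} already removes the left-computability assumption present in Hoyrup's original result, the corollary then inherits the strengthened form automatically.
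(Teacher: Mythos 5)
Your proof of the first assertion is exactly the paper's: apply Theorem~\ref{theorem:nearly-computable-weakly-1-generic} and then Kurtz's result that every weakly $1$-generic real is hyperimmune. For the second assertion you take a slightly different route. The paper deduces it from the \emph{first} assertion, citing the fact that a Martin-L\"of random real is neither computable nor hyperimmune (so either branch of the dichotomy ``computable or hyperimmune'' already excludes ML-randomness), whereas you go back to the weak $1$-genericity dichotomy directly and use the standard fact that no weakly $1$-generic real is Martin-L\"of random (via the dense effectively open classes $U_c=\{x : (\exists n)\, K(x\upharpoonright n)<n-c\}$ and the Levin--Schnorr/Chaitin characterization), together with the trivial observation that computable reals are not ML-random. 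Both arguments are correct and both rest only on Theorem~\ref{theorem:nearly-computable-weakly-1-generic} plus classical facts; your version matches the sketch the paper itself gives in the introduction (``any weakly $1$-generic real is hyperimmune and not Martin-L\"of random''), while the paper's published proof is marginally more economical in that it reuses item~1 and needs only the single citation to Stephan and Wu rather than a second fact about genericity. There is no gap in either route.
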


Note that the first assertion is equivalent to the following: 
If a real number is nearly computable and not hyperimmune
then it is computable.

\begin{proof}
The first assertion follows from Theorem~\ref{theorem:nearly-computable-weakly-1-generic}
and from the following result by Kurtz~\cite[Corollary 2.7]{Kur83}:
every weakly $1$-generic real is hyperimmune
(for a short proof of this fact see \cite[Prop.~1.8.48]{Nie09}).
The second assertion follows from the first because 
a Martin-L{\"o}f random real is neither computable nor hyperimmune; see, e.g., \cite{SW05}.
\end{proof}

This corollary strengthens Theorem 6 by Stephan and Wu~\cite[Theorem 6]{SW05},
who had shown the same assertions under the additional assumption that the
real number is left-computable.
This was the fourth observation of the kind described in the previous section
(that asking a real number to be left-computable and nearly computable and
to satisfy some other effectivity condition forces the number to be computable)
that we wanted to mention.

In order to state the next results we need to introduce some further notions.
Let $\Sigma:=\{0,1\}$ be the binary alphabet.
For any function $f:\subseteq\Sigma^*\to\Sigma^*$ defined on a subset of $\Sigma^*$
and with values in $\Sigma^*$
the {\em Kolmogorov complexity $K_f(\tau)$} of a binary string $\tau$ is
\[ K_f(\tau):=\begin{cases}
   \infty & \text{if there is no } \sigma \in \dom(f) \text{ with } f(\sigma)=\tau, \\
   \min\{|\sigma| ~:~ \sigma\in\dom(f), f(\sigma)=\tau\} & \text{otherwise}.
   \end{cases}
\]
A computable function $f:\subseteq\Sigma^*\to\Sigma^*$ is called {\em universal}
if its domain $\dom(f)$
is a prefix-free subset of $\Sigma^*$ and if for every computable function $g:\subseteq\Sigma^*\to\Sigma^*$
with prefix-free domain there exists a constant $c\in\IN$ such that
for any binary string $\tau$ with $K_g(\tau)<\infty$ we have $K_f(\tau)\leq K_g(\tau)+c$.
It is well known that there exists a universal function $f:\subseteq\Sigma^*\to\Sigma^*$;
see, e.g., \cite{DH10,Nie09}.
Note that if $f:\subseteq\Sigma^*\to\Sigma^*$ is a universal function
then $K_f(\tau)<\infty$, for all $\tau\in\Sigma^*$.
In the rest of this section we fix a universal function $U:\subseteq\Sigma^*\to\Sigma^*$
and write $K$ instead of $K_U$.

For any finite binary string $w\in\Sigma^*$ let $0.w$ be the rational number in $[0,1]$
with the finite binary expansion $w$, that is,
$0.w:=\sum_{n=0}^{|w|-1} 2^{-n-1} \cdot w(n)$. Similarly, for any $\alpha\in\Sigma^\IN$
let $0.\alpha$ be the real number in $[0,1]$ defined by $0.\alpha:=\sum_{n=0}^\infty 2^{-n-1}\cdot \alpha(n)$.
In this case $\alpha$ is called a {\em binary representation} of the real number $0.\alpha$.
Note that only the real numbers in $[0,1]$ of the form $z/2^k$ for $k\in\IN$ and $z\in\IZ$
with $0\leq z \leq 2^k$ have two binary representations.
All other real numbers in $[0,1]$ have exactly one binary representation.

\begin{definition}
\begin{enumerate}
\item
An infinite binary sequence $\alpha$ is called {\em K-trivial}
if there exists a constant $c\in\IN$ such that, for all $n\in\IN$,
$K(\alpha\upharpoonright n) \leq K(0^n) + c$, where $0^n$
is the string of length $n$ consisting only of zeros.
\item
An infinite binary sequence $\alpha$ is called {\em infinitely often K-trivial}
if there exists a constant $c\in\IN$ such that, for infinitely many $n\in\IN$,
$K(\alpha\upharpoonright n) \leq K(0^n) + c$.
\item
Following \cite{SW05} we call an infinite binary sequence $\alpha$ {\em strongly Kurtz random}
if there does not exist a total computable function $f:\IN\to\IN$ such that, for all $n\in\IN$,
$K(\alpha\upharpoonright f(n)) < f(n)-n$.
\end{enumerate}
\end{definition}

A real number $x \in[0,1]$ is called {\em K-trivial} respectively {\em infinitely often K-trivial}
respectively {\em strongly Kurtz random} if
at least one (and then all) of its binary representations is (are) K-trivial respectively strongly Kurtz random.

\begin{corollary}
Every nearly computable number in $[0,1]$ is infinitely often K-trivial.
\end{corollary}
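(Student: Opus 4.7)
The plan is to bound $K(\alpha \upharpoonright n)$ via the uniformly computable rational approximations of $\alpha$. If $\alpha$ is computable then its binary representation $\beta$ is computable, so $K(\beta \upharpoonright n) \leq K(n) + O(1) = K(0^n) + O(1)$ for every $n$, making $\alpha$ actually $K$-trivial (and in particular infinitely often $K$-trivial). Assume from now on that $\alpha$ is nearly computable but not computable; by Theorem~\ref{theorem:nearly-computable-weakly-1-generic}, $\alpha$ is then weakly $1$-generic and hence irrational. Fix a computable sequence $(q_n)_n$ of rationals converging to $\alpha$ that is nearly computably Cauchy. Uniform computability of the sequence yields a constant $c_0$ with $K(q_n) \leq K(n) + c_0$ for every $n$.

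The critical intermediate step is to show that there are infinitely many $n$ with $|q_n - \alpha| < 2^{-(n+2)}$. For each such $n$, irrationality of $\alpha$ keeps it at positive distance from every multiple of $2^{-n}$, and so for all but finitely many relevant $n$ the first $n$ binary digits of $q_n$ coincide with $\alpha \upharpoonright n$. The algorithm ``on input a description of $n$, compute $q_n$ and output its first $n$ binary digits'' is then a description of $\alpha \upharpoonright n$ of length at most $K(n) + O(1)$, yielding $K(\alpha \upharpoonright n) \leq K(0^n) + O(1)$ and establishing infinitely often $K$-triviality with constant $c_0 + O(1)$.

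To produce these infinitely many good indices I would exploit the nearly-computably-Cauchy structure from Definition~\ref{definition:ncc}. Taking $r = \mathrm{id}$ gives a computable modulus $g$ with $|q_{n+1} - q_n| \leq 2^{-k}$ for $n \geq g(k)$, and analogous computable moduli $g_r$ exist for every strictly increasing computable $r$. Since $\alpha \notin \EC$, there can be no computable modulus of convergence for $(q_n) \to \alpha$; yet $(q_n)$ does converge and has consecutive differences tending computably to $0$, so it must oscillate around $\alpha$ with divergent total variation. The plan is to argue that in each such oscillation after step $g(k)$, the sequence $(q_n)$ must cross to within $2^{-k}$ of $\alpha$ (because consecutive jumps are at most $2^{-k}$ and the oscillation itself straddles $\alpha$); coupling this with a computable choice $k = k(n)$ growing with $n$, one then extracts a cofinal set of indices $n$ with $|q_n - \alpha| < 2^{-(n+2)}$.

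The main obstacle is to make this oscillation-crossing argument fully rigorous using only the nearly-computably-Cauchy hypothesis and not a computable full Cauchy modulus, since the latter would force $\alpha \in \EC$ and contradict the standing assumption. In particular, one must use the full ``for every strictly increasing computable $r$'' quantifier in Definition~\ref{definition:ncc}, perhaps by diagonalizing against the putative cofinite failure $|q_n - \alpha| \geq 2^{-(n+2)}$: if this failure held cofinitely, a computable $r$ reading off the alleged ``bad'' indices should, together with the moduli $g_r$, be turned into a computable Cauchy modulus for $(q_n)$, yielding the contradiction. Once the infinitely many good indices are produced, the promised $K$-complexity bound is immediate from $K(q_n) \leq K(n) + c_0$.
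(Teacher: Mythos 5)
Your overall route differs from the paper's (which simply combines Theorem~\ref{theorem:nearly-computable-weakly-1-generic} with the cited result of Barmpalias and Vlek that every weakly $1$-generic real in $[0,1]$ is infinitely often K-trivial), but it contains a genuine gap at its central step. The claim that your fixed nearly computably Cauchy approximation $(q_n)_n$ satisfies $|q_n-\alpha|<2^{-(n+2)}$ for infinitely many $n$ is nowhere proved, and the heuristic you offer for it is incorrect: a nearly computably Cauchy approximation need not ``oscillate around $\alpha$'' at all, and its total variation need not diverge. Indeed, by Theorem~\ref{theorem:downey-laforte} there is a left-computable, nearly computable, noncomputable $\alpha$, and by Proposition~\ref{proposition:nearly-comp-eq} every nondecreasing computable sequence of rationals converging to such an $\alpha$ is nearly computably Cauchy; for a bounded monotone approximation the total variation is finite and the sequence never crosses $\alpha$, so the ``oscillation-crossing'' picture collapses. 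What you would actually need is that the accuracy function $n\mapsto -\log|q_n-\alpha|$ is not eventually dominated by the identity (after a computable speed-up), i.e.\ that the settling time of the approximation does not dominate all computable functions; nothing in the nearly-computably-Cauchy hypothesis visibly rules this out, your proposed diagonalization is only gestured at, and it is unclear how a computable function $r$ could ``read off'' the alleged bad indices, since they are defined in terms of the noncomputable $\alpha$. In short, the key intermediate claim is exactly the hard content, and it is left unproved (and may require an argument of comparable depth to the Barmpalias--Vlek theorem the paper invokes).

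A secondary, fixable flaw: even granting infinitely many $n$ with $|q_n-\alpha|<2^{-(n+2)}$, irrationality of $\alpha$ does not give that the first $n$ binary digits of $q_n$ coincide with $\alpha\upharpoonright n$ for all but finitely many such $n$. The distance from $\alpha$ to the nearest multiple of $2^{-n}$, while positive for each $n$, can be far smaller than $2^{-(n+2)}$ for infinitely many $n$ --- in fact weakly $1$-generic reals are infinitely often extraordinarily close to dyadic rationals, so exactly at your ``good'' indices the approximation could land in the adjacent dyadic interval. The standard repair is to note that the interval $(q_n-2^{-(n+2)},q_n+2^{-(n+2)})$ meets at most two dyadic intervals of length $2^{-n}$, so $\alpha\upharpoonright n$ is one of two strings computable from a shortest program for $n$, costing one additional bit; this preserves the bound $K(\alpha\upharpoonright n)\leq K(0^n)+O(1)$. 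But this repair does not touch the main gap described above.
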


\begin{proof}
This follows from Theorem~\ref{theorem:nearly-computable-weakly-1-generic},
from the fact that every computable real number in $[0,1]$ is K-trivial,
hence, infinitely often K-trivial,
and from the following result by Barmpalias and Vlek~\cite[Theorem 2.9]{BV11}:
every weakly $1$-generic real in $[0,1]$ is infinitely often K-trivial.
\end{proof}

This raises the question whether a nearly computable number can or must be K-trivial.
For nearly computable numbers that are left-computable this question
was answered by Stephan and Wu~\cite{SW05}.
The following result by them is the fifth observation of the kind described in the previous section
(that asking a real number to be left-computable and nearly computable and
to satisfy some other effectivity condition forces the number to be computable)
that we wish to mention.

\begin{proposition}[{Stephan and Wu~\cite[Theorems 3 and 4]{SW05}}]
\begin{enumerate}
\item
If a real number in $[0,1]$ is left-computable and nearly computable 
then it is either computable or strongly Kurtz random.
\item
If a real number in $[0,1]$ is left-computable, nearly computable, and K-trivial then it is computable.
\end{enumerate}
\end{proposition}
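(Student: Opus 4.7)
I would begin by reducing assertion~(2) to assertion~(1). If $\alpha\in[0,1]$ is $K$-trivial then $K(\alpha\upharpoonright n)\leq K(0^n)+O(1)\leq 2\log n+O(1)$ for all $n$, so for any sufficiently fast-growing computable $f\colon\IN\to\IN$ (for example, $f(n):=2^{n+c}$ for a large enough constant $c$) one has $K(\alpha\upharpoonright f(n))<f(n)-n$ for every $n\in\IN$, which shows that $\alpha$ is not strongly Kurtz random; then part~(1) forces $\alpha$ to be computable.

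For part~(1), I would assume that $\alpha\in[0,1]$ is left-computable and nearly computable but not strongly Kurtz random, and derive that $\alpha$ is computable. Let $(a_s)_s$ be an increasing computable rational sequence converging to $\alpha$, and let $f\colon\IN\to\IN$ be a computable witness, so that $K(\alpha\upharpoonright f(n))<f(n)-n$ for every $n$; we may assume $\alpha$ is irrational, since dyadic rationals are computable. The plan is to invoke condition~\eqref{proposition:nearly-comp-eq-6} of Proposition~\ref{proposition:nearly-comp-eq}: the hypothesis forces every prefix-free c.e.\ set $A\subseteq\{0,1\}^*$ with $\sum_{w\in A}2^{-|w|}=\alpha$ to be decidable, and the goal is to engineer one specific such $A$ whose decidable membership data lets us read off $\alpha$ computably.

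The central object is the c.e.\ set $V_n := \{U(\sigma):|\sigma|<f(n)-n,\,|U(\sigma)|=f(n)\}$, which has fewer than $2^{f(n)-n}$ elements and contains $w^*_n := \alpha\upharpoonright f(n)$; the associated dyadic intervals $I_w := [0.w,\, 0.w+2^{-f(n)})$ for $w\in V_n$ are pairwise disjoint, of total length less than $2^{-n}$, and one of them contains $\alpha$. Using $(a_s)_s$ one can c.e.-enumerate the two sets $V_n^{<} := \{w\in V_n : 0.w+2^{-f(n)}<\alpha\}$ and $V_n^{\leq} := \{w\in V_n : 0.w<\alpha\}$, and crucially $V_n^{\leq}\setminus V_n^{<}=\{w^*_n\}$. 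I would assign to each pair $(n,w)$ with $n\in\IN$ and $w\in\{0,1\}^{f(n)}$ a distinct prefix-free ``slot'' $c(n,w)$ computable from $(n,w)$, and split the slots into two parallel families---one keyed to $V_n^{<}$ and one to $V_n^{\leq}$---so that $A$'s membership encodes both c.e.\ sets. The residual mass $\alpha$ minus the total contribution of these slots is left-computable and can be realized via Lemma~\ref{lemma:Kraft-Chaitin} as a prefix-free c.e.\ ``filler'' in a wing of $\{0,1\}^*$ disjoint from the slots, so that $\sum_{w\in A}2^{-|w|}=\alpha$ exactly. Given that $A$ is decidable, both $V_n^{<}$ and $V_n^{\leq}$ become decidable, so $w^*_n$ is uniformly computable in $n$; since $f$ is computable and unbounded, this yields $\alpha$ to arbitrary precision, and $\alpha$ is computable.

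The main obstacle is to arrange the three requirements simultaneously: the total weight must equal $\alpha$ exactly; $A$ must be prefix-free and c.e.; and decidability of $A$ must recover \emph{both} $V_n^{<}$ and $V_n^{\leq}$ for every $n$, so that their set-theoretic difference $\{w^*_n\}$ is uniformly computable. The delicate technical step is the Kraft--Chaitin bookkeeping that routes the left-computable residual mass into a wing disjoint from the slot-encoded part without disturbing the slot-based information encoding; in particular, care is needed to ensure that slots corresponding to candidates $w\in V_n$ with $w>w^*_n$ (which are never c.e.-confirmed to lie below $\alpha$) remain permanently excluded from $A$ and do not accidentally absorb any residual mass.
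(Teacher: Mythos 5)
Your reduction of assertion~(2) to assertion~(1) is correct and is essentially the same argument the paper uses for its stronger Theorem~\ref{theorem:StephanWu-stronger} (with $K(0^n)<n/2+d$ in place of $2\log n$). For assertion~(1), your overall route --- pass to condition~\eqref{proposition:nearly-comp-eq-6} of Proposition~\ref{proposition:nearly-comp-eq}, introduce the candidate sets $V_n$ of strings of length $f(n)$ with complexity below $f(n)-n$, and build one specific presentation of $\alpha$ whose decidability reveals $\alpha\upharpoonright f(n)$ --- is the strategy of the cited original proof, and the preparatory facts you state ($|V_n|<2^{f(n)-n}$, $\alpha\upharpoonright f(n)\in V_n$, $V_n^{<}$ and $V_n^{\leq}$ c.e.\ thanks to left-computability, $V_n^{\leq}\setminus V_n^{<}=\{\alpha\upharpoonright f(n)\}$ for irrational $\alpha$) are all fine. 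The genuine gap is at the crux, namely the construction of a prefix-free c.e.\ set $A$ with $\sum_{w\in A}2^{-|w|}=\alpha$ \emph{exactly}. You assert that the residual mass, $\alpha$ minus the total weight $\sigma$ of the enumerated slots, is left-computable and can therefore be realized by Lemma~\ref{lemma:Kraft-Chaitin} as a c.e.\ filler. That assertion is unjustified and in general false: $\sigma$ is itself a noncomputable left-computable real determined by the c.e.\ enumerations of $V_n^{<}$ and $V_n^{\leq}$, and a difference of two left-computable reals need not be left-computable, so there is no computable sequence of weights summing to $\alpha-\sigma$ that you could feed into the Kraft--Chaitin lemma. (You also never fix the slot lengths or verify $\sigma<\alpha$, but that is minor compared with the residual problem.)

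The obvious repairs run into a real dilemma, which is exactly where the work of Stephan and Wu lies. If you enumerate a slot only after increases of the approximation $(a_s)_s$ have paid for it, then a request $(n,w)$ arriving at a stage where $\alpha-a_s$ is already smaller than the slot weight may never become affordable; then the (decidable) set $A$ lacks that slot even though $w\in V_n^{\leq}$, and your decoding of $V_n^{\leq}$ and $V_n^{<}$ from $A$ breaks down. If instead you reserve weight in advance for every potential slot, the reserve attached to candidates that never enter $V_n$ can never be released (that they never appear is a $\Pi_1$ fact you cannot confirm), so the total weight of $A$ falls short of $\alpha$ and $A$ is not a presentation of $\alpha$ at all, hence condition~\eqref{proposition:nearly-comp-eq-6} does not apply to it. So the encoding must be organized quite differently (for instance by packaging the actual increments of $(a_s)_s$ so that their placement carries the information), and this is the nontrivial content of the proof rather than routine bookkeeping. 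Note also that the paper obtains the stronger Theorem~\ref{theorem:StephanWu-stronger} by a route that avoids presentations and left-computability altogether: it picks a computable subsequence of approximating strings all of whose length-$f(n)$ prefixes have complexity below $f(n)-n$, takes a computable modulus $g$ of convergence for the consecutive differences (this is where near computability enters), and bounds $|0.u_{s(g(f(n)))}-0.\alpha|\leq 2^{-n}$ by counting the at most $2^{f(n)-n-1}$ strings of low complexity; recasting your argument along those lines would both close the gap and make the left-computability hypothesis unnecessary.
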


Note that the first assertion is equivalent to the following: 
If a real number is left-computable, nearly computable, and not strongly Kurtz random
then it is computable.

We are going to show that, similarly as above, this proposition can be strengthened as follows:
One can omit the assumption that the real number is left-computable.

\begin{theorem}
\label{theorem:StephanWu-stronger}
\begin{enumerate}
\item
If a real number in $[0,1]$ is nearly computable 
then it is either computable or strongly Kurtz random.
\item
If a real number in $[0,1]$ is nearly computable and K-trivial then it is computable.
\end{enumerate}
\end{theorem}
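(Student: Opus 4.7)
The second assertion is a reduction to the first: if $\alpha\in[0,1]$ is $K$-trivial then $K(\alpha\upharpoonright n)\leq K(0^n)+O(1)=O(\log n)$, so the total computable function $f(n):=2n$ witnesses, for all sufficiently large $n$, that $K(\alpha\upharpoonright f(n))<f(n)-n$; in other words $\alpha$ is not strongly Kurtz random, and by the first assertion $\alpha$ is then computable.

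For the first assertion I plan to adapt the proof of Theorem~\ref{theorem:nearly-computable-weakly-1-generic}, replacing the dense open c.e.\ set used there by the Martin-L\"of-style covers arising from the failure of strong Kurtz randomness. Assume for contradiction that $\alpha\in[0,1]$ is nearly computable, not strongly Kurtz random, and not computable. Fix a total computable $f:\IN\to\IN$ with $K(\alpha\upharpoonright f(n))<f(n)-n$ for all $n$, and set
\[
T_n:=\{\sigma\in\{0,1\}^{f(n)}:K(\sigma)<f(n)-n\},\qquad
V_n:=\bigcup_{\sigma\in T_n}\bigl(0.\sigma,\,0.\sigma+2^{-f(n)}\bigr).
\]
Then $T_n$ is c.e.\ uniformly in $n$, $|T_n|<2^{f(n)-n}$, and $V_n$ is effectively open with Lebesgue measure $<2^{-n}$. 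Since $\alpha$ is not computable it is irrational, hence lies in a unique open cylinder of $V_n$ indexed by $\sigma_n^{\ast}:=\alpha\upharpoonright f(n)\in T_n$. Fix a computable sequence $(x_m)_m$ of rationals with $x_m\to\alpha$ that is nearly computably Cauchy, and — using Lemma~\ref{lemma:ncc-basic-2} applied to the identity — a computable modulus $g$ of the convergence of $(|x_{m+1}-x_m|)_m$ to $0$.

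The geometric heart of the plan mirrors Theorem~\ref{theorem:nearly-computable-weakly-1-generic}: for $m\geq g(f(n)+1)$ the increments satisfy $|x_{m+1}-x_m|\leq 2^{-f(n)-1}$, which is strictly less than the width $2^{-f(n)}$ of any cylinder of $V_n$, so $(x_m)$ cannot hop strictly from one side of a cylinder $[\sigma]$ of $V_n$ to the other in a single step. For each $\sigma$ entering $T_n$ at stage $s$, set $r_{n,\sigma}:=\max\{s,g(f(n)+1)\}$ and compute one of three labels — \emph{left of} $\alpha$, \emph{right of} $\alpha$, or \emph{inside} — by comparing $x_{r_{n,\sigma}}$ to the endpoints of $[\sigma]$. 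For $\sigma\neq\sigma_n^{\ast}$ we have $\alpha\notin[\sigma]$, and the no-jump property combined with $x_m\to\alpha$ forces eventual stabilisation of the label to a left/right value consistent with the actual geometric position of $\alpha$. Then $\sigma_n^{\ast}$ is identified as the unique string of $T_n$ flanked by ``right''-labels for all strings of $T_n$ to its dyadic left and ``left''-labels for all strings to its right, which yields $0.\sigma_n^{\ast}$ as a computable $2^{-f(n)}$-approximation of $\alpha$, contradicting noncomputability.

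The main obstacle is ruling out the \emph{inside} label for $\sigma\neq\sigma_n^{\ast}$: although $x_m\to\alpha\notin[\sigma]$ guarantees that $x_m$ eventually leaves $[\sigma]$, the stage at which this occurs is not computable in $(n,\sigma)$ — a computable modulus of convergence of $x_m$ to $\alpha$ would immediately give computability of $\alpha$. In Theorem~\ref{theorem:nearly-computable-weakly-1-generic} one dodges this by passing to a computable subsequence $z_n=x_{s(n)}$ avoiding every $\overline{B}_{f(i)}$, exploiting that $\alpha$ lies \emph{outside} the cover; here $\alpha$ lies \emph{inside} $V_n$ and the analogous subsequence trick is unavailable. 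The fix I anticipate is to iterate the labelling at successively larger stages $r_{n,\sigma}'>r_{n,\sigma}$, detecting and correcting transient ``inside'' excursions by applying nearly computably Cauchy to a strictly increasing computable function that jumps whenever the tentative label of some $\sigma$ changes; combined with a diagonalisation across $n$ this should force transient ``inside'' verdicts to eventually clear for every $\sigma\neq\sigma_n^{\ast}$ while the genuine $\sigma_n^{\ast}$ remains the unique persistently-undecided string at level $n$. Making this termination uniform in $n$, without invoking a nonexistent convergence modulus for $x_m\to\alpha$, is the delicate technical point.
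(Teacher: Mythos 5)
Your reduction of the second assertion to the first is essentially the paper's, up to a small slip: strong Kurtz randomness as defined here requires $K(\alpha\upharpoonright f(n))<f(n)-n$ for \emph{all} $n$, so ``$f(n)=2n$ works for all sufficiently large $n$'' is not literally a witness; you need to pad, e.g.\ $f(n):=2(n+c+d)$ where $K(\alpha\upharpoonright n)\leq K(0^n)+c$ and $K(0^n)<\frac{n}{2}+d$, which then works for every $n$. The real problem is in the first assertion, and it is exactly the one you flag yourself: your plan must decide, for each $\sigma\in T_n$ with $\sigma\neq\alpha\upharpoonright f(n)$, that the approximating sequence eventually leaves the cylinder $[\sigma]$, and there is no computable bound on when this happens. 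The ``fix you anticipate'' does not close this gap and, as sketched, is circular: near computable Cauchyness only guarantees that for a total computable function $r$ \emph{fixed in advance} there exists a computable modulus for $(|x_{r(m+1)}-x_{r(m)}|)_m$, whereas your re-labelling procedure wants to build $r$ adaptively from tentative labels that were themselves computed using a modulus attached to $r$. So the proposal, as it stands, does not prove the statement; identifying $\alpha$'s own cylinder in $T_n$ is precisely the step that cannot be carried out this way.

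The paper never locates $\alpha$'s cylinder. From $(a_m)\to 0.\alpha$ it passes to dyadic approximations $0.u_m$ with $|0.u_m-a_m|<2^{-m}$ (still nearly computably Cauchy by Lemma~\ref{lemma:ncc-pair-close}) and sets $\beta_m:=u_m0^\IN$. Since $\beta_m\upharpoonright f(k)$ eventually equals $\alpha\upharpoonright f(k)$ and ``$K(\sigma)<f(k)-k$'' is a c.e.\ condition, one can computably thin out to a strictly increasing $s$ with $K(\beta_{s(m)}\upharpoonright f(k))<f(k)-k$ for all $k\leq m$; this complexity-certified subsequence is defined \emph{before} any modulus is invoked, so there is no circularity. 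Taking a strictly increasing computable modulus $g$ for $(0.u_{s(m+1)}-0.u_{s(m)})_m$, a counting argument finishes: every length-$f(n)$ string $w$ with $0.w$ strictly between $0.u_{s(g(f(n)))}$ and $0.\alpha$ is realized as $\beta_{s(m)}\upharpoonright f(n)$ for some $m\geq g(f(n))$ (after stage $g(f(n))$ the steps have size at most $2^{-f(n)}$, so no cylinder is skipped), hence has complexity below $f(n)-n$; there are at most $2^{f(n)-n-1}$ such strings, so $|0.u_{s(g(f(n)))}-0.\alpha|\leq(2^{f(n)-n-1}+1)\cdot 2^{-f(n)}\leq 2^{-n}$, which makes $0.\alpha$ computable. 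The two ideas your sketch is missing are the certified subsequence $s$ and the replacement of exact identification of $\alpha$'s cylinder by a bound on the \emph{number} of low-complexity cylinders that can separate a computably determined approximant from $\alpha$; with these, the ``inside'' obstacle never arises.
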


\begin{proof}
The second assertion follows from the first assertion because a K-trivial binary sequence
cannot be strongly Kurtz random.
For completeness sake we prove this, similarly to the argument in \cite[Page 468]{SW05}.
Let $\alpha \in\Sigma^\IN$ be $K$-trivial.
Then there exists a constant $c\in\IN$ such that, for all $n\in\IN$,
$K(\alpha\upharpoonright n) \leq K(0^n) + c$.
And it is well known that there exists a constant $d\in\IN$ such that,
for all $n\in\IN$, $K(0^n) < \frac{n}{2} + d$.
The function $f:\IN\to\IN$ defined by $f(n) := 2\cdot (n+c+d)$
is computable and satisfies, for all $n\in\IN$,
\[ K(\alpha\upharpoonright f(n)) \leq K(0^{f(n)}) + c < \frac{f(n)}{2} + d + c = f(n)-n . \]
Hence, $\alpha$ is not strongly Kurtz random.

We prove the first assertion.
Let $\alpha\in\Sigma^\IN$ be an infinite binary sequence that is not strongly Kurtz random and such that the
real number $0.\alpha$ is nearly computable. We wish to show that the real number $0.\alpha$ is computable.
This is clear if $\alpha$ contains only finitely many zeros or only finitely many ones.
So, in the following we assume that $\alpha$ contains infinitely many zeros and infinitely many ones.
Let $f:\IN\to\IN$ be a total computable function such that $K(\alpha\upharpoonright f(n))<f(n)-n$, for all $n$.
This implies $f(n)>n$, for all $n$.
Let $(a_n)_n$ be a computable sequence of rational numbers in $[0,1]$ that is nearly computably Cauchy
and converges to $0.\alpha$.
For each $n$ we can compute a finite binary string $u_n$ with $|0.u_n - a_n| < 2^{-n}$.
Then the sequence $(u_n)_n$ of binary strings is computable, and the sequence
$(0.u_n)_n$ of rational numbers is nearly computably Cauchy as well (by Lemma~\ref{lemma:ncc-pair-close})
and converges to $0.\alpha$ as well.
For each $n\in\IN$ let $\beta_n\in\Sigma^\IN$ be the infinite binary sequence obtained
by appending to $u_n$ infinitely many zeros, that is, $\beta_n:=u_n0^\IN$.
As $\alpha$ contains infinitely many zeros and infinitely many ones,
the sequence $(\beta_n)_n$ converges to $\alpha$
(in the sense that for every $\ell\in\IN$ there exists an $m\in\IN$ such that, for all $n\geq m$,
$\beta_n \upharpoonright \ell = \alpha \upharpoonright \ell$).
We define a strictly increasing and computable function $s:\IN\to\IN$ as follows.
In order to define $s(0)$ we search for a number $t$ such that $K(\beta_t\upharpoonright f(0))<f(0)$
(the search is done using a fixed computable enumeration
of the domain $\dom(U)$ of the universal function $U$).
Once we have found such a $t$ we set $s(0):=t$. Note that
$K(\beta_{s(0)}\upharpoonright f(0))<f(0)-0$.
Let us assume that we have defined $s(0),\ldots,s(n)$ so that
\begin{enumerate}
\item
$s(0)< s(1) < \ldots < s(n)$,
\item
for all $k,m$ with $0 \leq k \leq m \leq n$, $K(\beta_{s(m)}\upharpoonright f(k))<f(k)-k$.
\end{enumerate}
In order to define $s(n+1)$ we search for a number $t>s(n)$ such that, for all $k\leq n+1$,
$K(\beta_t\upharpoonright f(k))<f(k)-k$.
Once we have found such a $t$ we set $s(n+1):=t$.
As, by assumption $K(\alpha\upharpoonright f(n))<f(n)-n$, for all $n$,
in this way a strictly increasing, computable function $s:\IN\to\IN$ is defined such that,
\[ \text{for all } m,n, \text{ if } m\geq n \text{ then } K(\beta_{s(m)}\upharpoonright f(n))<f(n)-n. \]

The sequence $(0.u_{s(n)})_n$ is a computable sequence of rational numbers.
As the sequence $(0.u_n)_n$ is nearly computably Cauchy, the sequence
$(0.u_{s(n+1)} - 0.u_{s(n)})_n$ converges computably to $0$. 
Let $g:\IN\to\IN$ be a computable
and strictly increasing 
modulus of convergence of this sequence.
We claim that, for all $n$, 
\begin{equation}
\label{eq:StephanWu-claim}
 |0.u_{s(g(f(n)))} - 0.\alpha| \leq 2^{-n} .
\end{equation}
As the sequence of rational numbers $(0.u_{s(g(f(n)))})_n$ is computable,
this claim shows that $0.\alpha$ is computable.
Let us prove the claim \eqref{eq:StephanWu-claim}.
Let us consider some $n\in\IN$.
Let 
\[ U_n :=\{w \in \Sigma^{f(n)} ~:~ \min\{0.\alpha, 0.u_{s(g(f(n)))}\}
  < 0.w < \max\{0.\alpha, 0.u_{s(g(f(n)))}\} \} . \]
Let 
\[ V_n := \{w \in \Sigma^{f(n)} ~:~ (\exists m \geq g(f(n))) \ w = \beta_{s(m)}\upharpoonright f(n) \} . \]
We make the following three observations.
\begin{enumerate}
\item
$|0.u_{s(g(f(n)))} - 0.\alpha| \leq (|U_n| + 1) \cdot 2^{-f(n)}$
(here $|U_n|$ is the number of elements of the finite set $U_n$).
This is an immediate consequence of the definition of $U_n$.
\item
$U_n \subseteq V_n$. This is due to the fact that, for all $m\geq g(f(n))$,
$|0.u_{s(m+1)} - 0.u_{s(m)}| \leq 2^{-f(n)}$.
The rational numbers $0.u_{s(m)}$ for $m\geq g(f(n))$ converge to $\alpha$,
and they do this in small steps so that
all intervals of the form $[0.w,0.w+2^{-f(n)})$ for $w\in U_n$ must be visited by them.
\item
$|V_n| \leq 2^{f(n)-n-1}$. 
Indeed, on the one hand, for all $m\geq n$,
$K(\beta_{s(m)} \upharpoonright f(n))<f(n)-n$,
and, if $m\geq g(f(n))$, then, due to $f(n)>n$ and the assumption that $g$ is strictly increasing,
$m\geq g(f(n)) > g(n)\geq n$.
On the other hand, it is clear that for any $k>0$ there are at most
$2^{k-1}$ strings $w$ with $K(w) < k$.
\end{enumerate}
Putting these observations together we conclude
\[  |0.u_{s(g(f(n)))} - 0.\alpha|
   \leq (|U_n| + 1) \cdot 2^{-f(n)}
   \leq (|V_n| + 1) \cdot 2^{-f(n)}
   \leq (2^{f(n)-n-1} + 1) \cdot 2^{-f(n)}
   \leq 2^{-n}.
\]
This ends the proof of the first assertion and, thus, the proof of the theorem.
\end{proof}

\section{Promptly Simple Sets and Nearly Computable Numbers}
\label{section:promptly-simple}

In this section we are going to strengthen the following result by Downey and LaForte.

\begin{proposition}[{Downey and LaForte~\cite[Theorem 11]{DL02}}]
\label{proposition:DLF-promptly-simple}
If a real number is left-computable and nearly computable 
then it does not have promptly simple Turing degree.
\end{proposition}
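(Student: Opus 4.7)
The plan is a proof by contradiction that combines the characterization in Proposition~\ref{proposition:nearly-comp-eq} with a direct Kraft--Chaitin style construction exploiting prompt simplicity. Suppose $\alpha\in\LC\cap\NC$ and, for a contradiction, that the Turing degree of $\alpha$ contains a promptly simple set $B$, so $B\equiv_{\mathrm T}\alpha$. By replacing $\alpha$ with its fractional part (which lies in $\LC\cap\NC$ since $\IZ\subseteq\EC\subseteq\LC\cap\NC$ and $\NC$ is closed under translation by rationals by Corollary~\ref{cor:NC-real-closed-field}), we may assume $\alpha\in[0,1]$. Fix a strictly increasing computable sequence $(q_s)_s$ of rationals converging to $\alpha$, and let $p$ be a computable pace witnessing the prompt simplicity of $B$.

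By Proposition~\ref{proposition:nearly-comp-eq}(\ref{proposition:nearly-comp-eq-6}) it then suffices to exhibit a prefix-free c.e.\ set $A\subseteq\{0,1\}^*$ with $\sum_{w\in A}2^{-|w|}=\alpha$ that is not decidable; such an $A$ would contradict the near-computability of $\alpha$. I would construct $A$ stage by stage, using the Kraft--Chaitin theorem (Lemma~\ref{lemma:Kraft-Chaitin}) at each stage $s$ to realize the increment $\delta_s:=q_{s+1}-q_s$ as a finite prefix-free family of new strings of total mass $\delta_s$ that extends the previously enumerated strings without breaking prefix-freeness. The freedom in choosing the lengths of these new strings is exploited to encode the enumeration of $B$ into $A$, thereby obtaining $B\leqT A$; since $B$ is not computable, this forces $A$ to be undecidable.

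More concretely, for each index $e$ I would fix a candidate witness $x_e$ and make use of prompt simplicity as follows: whenever a fresh $x\in W_e[s]\setminus W_e[s-1]$ enters $B$ by stage $p(s)$, respond by placing into $A$ a string of a prescribed length that depends on $x$, so that any $\varphi_e$-computable attempt to decide $A$ fails at an infinitely recurring witness, and so that $B$ can be read off $A$ along a computable sequence of positions. This is a standard promptly-simple diagonalization overlaid on the Kraft--Chaitin construction.

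The central obstacle is to reconcile two computable timetables while the running Kraft sum converges exactly to $\alpha$. On one hand, Proposition~\ref{proposition:nearly-comp-eq}(\ref{proposition:nearly-comp-eq-4}) yields a computable modulus $g$ with $\delta_s\leq 2^{-n}$ for all $s\geq g(n)$, so the per-stage budget shrinks at a computable rate; on the other hand, prompt simplicity supplies its own computable pace $p$. The construction must be arranged so that whenever a coding action is required at some stage, the remaining budget $\delta_s,\delta_{s+1},\ldots$ is large enough to accommodate strings of the lengths dictated by the code, yet the cumulative Kraft sum never overshoots $\alpha$ in the limit. Matching these two rates while respecting the priorities of the coding requirements is the technical heart of the argument; the remaining bookkeeping is routine priority-argument machinery.
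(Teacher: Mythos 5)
Your framing via Proposition~\ref{proposition:nearly-comp-eq}\eqref{proposition:nearly-comp-eq-6} --- derive a contradiction by exhibiting an undecidable prefix-free c.e.\ presentation of $\alpha$ --- is exactly the route by which the cited Downey--LaForte theorem yields this proposition, so the plan points in a reasonable direction. But there is a genuine gap: everything that makes the cited theorem true is deferred. First, the way you invoke prompt simplicity is not a workable mechanism. Prompt simplicity of $B$ only says that, for each $e$ with $W_e$ infinite, \emph{some} number enters $B$ promptly after entering $W_e$; to exploit this you must enumerate your \emph{own} auxiliary c.e.\ sets of candidate witnesses, at moments chosen by the construction, and you must know indices for them in advance --- this is what the Slowdown Lemma together with the Recursion Theorem provides in the paper's proof of the stronger Theorem~\ref{theorem:promptlysimple}. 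Passively reacting to fresh elements of $W_e$ that happen to lie in $B[p(s)]$ gives no leverage on a fixed requirement and certainly does not let you arrange that ``$B$ can be read off $A$ along a computable sequence of positions''. Second, you never actually use the hypothesis $B\equiv_{\mathrm{T}}\alpha$ as a reduction. The essential point (see Lemma~\ref{lemma:Turing-reduction} and its use in the proof of Theorem~\ref{theorem:promptlysimple}) is that a reduction of a promptly simple set to $\alpha$ turns a prompt change of $B$ at a chosen witness into a guaranteed large move of the approximation $(q_s)$ out of a known dyadic interval within a computably bounded number of stages; only this converts promptness into available measure (or into the failure of a purported modulus) at a time the construction can anticipate. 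Without that mechanism, the ``central obstacle'' you describe --- having enough budget $\delta_s$ when a coding action is due, while near computability forces the budget to shrink at a computable rate --- is not bookkeeping: it is the whole theorem. Indeed, the main result of Downey and LaForte is precisely that for a general left-computable real such coding \emph{cannot} be carried out, so it cannot be dismissed as routine priority machinery.

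A secondary structural remark: once you do combine the reduction with prompt permitting to force a large increment past a computably known stage, you contradict Proposition~\ref{proposition:nearly-comp-eq}\eqref{proposition:nearly-comp-eq-4} directly, with no need for the Kraft--Chaitin layer, prefix-freeness, or exact-measure bookkeeping; this is essentially the paper's proof of Theorem~\ref{theorem:promptlysimple}, which in addition dispenses with left-computability altogether. So the detour through presentations and \eqref{proposition:nearly-comp-eq-6} adds constraints without buying anything, and in its present form your argument mixes the two routes (it imports the modulus from \eqref{proposition:nearly-comp-eq-4} into a construction whose target contradiction is \eqref{proposition:nearly-comp-eq-6}) without completing either.
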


A set $A\subseteq\IN$ is called {\em simple} if it is c.e., coinfinite,
and its complement does not contain an infinite c.e.~subset.
This notion was strengthened as follows by Maass~\cite{Maa82}.

\begin{definition}
A set $A\subseteq \IN$ is called {\em promptly simple} if it is c.e., coinfinite,
and if there is a computable function $h:\IN\to\IN$ such that,
for every $e\in\IN$, if $W_e$ is infinite then there exist $s>0$ and $x\in\IN$
such that $x \in (W_e[s] \setminus W_e[s-1]) \cap A[h(s)]$.
\end{definition}

Clearly, every promptly simple set is simple.
A Turing degree is called {\em computably enumerable} or {\em c.e.} if it contains a c.e. set.
A Turing degree is called {\em promptly simple} if it contains a promptly simple set.
Clearly, every promptly simple Turing degree is a c.e. Turing degree.

For a real number $\beta$, by the {\em Turing degree of $\beta$} we mean the Turing degree
of the unique coinfinite set $B\subseteq\IN$ with $(\beta - 0.B)\in\IZ$.
Note that this is identical with the Turing degree of the set $\{n\in\IN ~:~ \nu_\IQ(n)<\beta\}$.
Hence, the Turing degree of any left-computable number is a c.e. Turing degree.

As within the c.e.~Turing degrees
the set of all promptly simple Turing degrees is upwards closed
with respect to Turing reduction~\cite[Corollary XIII.1.9]{Soa87}
(that means, if $\mathbf{a}$ is a promptly simple Turing degree
and $\mathbf{b}$ is a c.e.~Turing degree with $\mathbf{a} \leqT \mathbf{b}$
then $\mathbf{b}$ is a prompty simple Turing degree as well),
Proposition~\ref{proposition:DLF-promptly-simple} by Downey and LaForte~\cite[Theorem 11]{DL02}
is equivalent to the following assertion:

\begin{quote}
{\em If $\beta$ is a left-computable and nearly computable number then there does not exist a 
promptly simple set $A$ with $A\leqT\beta$.}
\end{quote}

We are going to show that this assertion can be strengthened as follows: One can omit
the assumption that the real number is left-computable.

\begin{theorem}
\label{theorem:promptlysimple}
If $\beta$ is a nearly computable number then there does not exist a 
promptly simple set $A$ with $A\leqT\beta$.
\end{theorem}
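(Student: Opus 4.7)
The plan is to argue by contradiction: suppose $\beta\in\NC$ and $A\leqT\beta$ with $A$ promptly simple. Since promptly simple sets are noncomputable, $A$ is noncomputable, so $\beta$ must be noncomputable as well. Fix a computable sequence $(a_n)_n$ of rationals that converges to $\beta$ and is nearly computably Cauchy, an oracle Turing machine $\Phi$ with $\Phi^{B}=\chi_A$ for $B:=\{n\in\IN\ :\ \nu_\IQ(n)<\beta\}$, and the computable prompting function $h$ witnessing prompt simplicity of $A$. The goal is to extract from these data enough effective information to either compute $\beta$ (contradicting its noncomputability) or force $(a_n)_n$ to violate Definition~\ref{definition:ncc}.

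The broad strategy mirrors the proofs of Theorem~\ref{theorem:nearly-computable-weakly-1-generic} and Theorem~\ref{theorem:StephanWu-stronger}, where an external effectivity condition is converted into a computable selector $r:\IN\to\IN$ along which the sub-approximation $(a_{r(n)})_n$ can be controlled. Concretely, for each stage $s$, let $B_s:=\{n\ :\ \nu_\IQ(n)<a_s\}$ and run $\Phi^{B_s}$ with a time bound to obtain a tentative value $A_s(x)$; by the recursion theorem construct a c.e.\ set $W_e$ whose elements are the fresh inputs $x$ for which $A_s(x)$ is tentatively equal to $0$ based on an apparently stable use at stage $s$. Prompt simplicity then guarantees infinitely many stages $s$ at which some newly enumerated $x_s\in W_e[s]\setminus W_e[s-1]$ actually enters $A[h(s)]$; at each such ``promptness event'' the disagreement between the simulated value $A_s(x_s)=0$ and the true value $\chi_A(x_s)=1$ localizes $\beta$, in the sense that one of the rationals $q_s$ queried by $\Phi$ on input $x_s$ must satisfy $a_s>q_s$ while $\beta<q_s$ (or the symmetric configuration).

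Each promptness event thus produces, effectively from stage $s$, a rational $q_s$ and a sign telling us that $\beta$ lies on the opposite side of $q_s$ from $a_s$. I would then define the strictly increasing computable selector $r:\IN\to\IN$ so as to interleave these promptness stages with later stages at which $(a_n)$ has moved past $q_s$; because $(a_n)\to\beta$, such later stages exist and can be found effectively. The construction is arranged so that along $r$ the jumps $|a_{r(n+1)}-a_{r(n)}|$ are each at least $|a_s-q_s|$, a strictly positive computable quantity, for infinitely many $n$. Since $(a_n)_n$ is nearly computably Cauchy, the sequence $(d(a_{r(n+1)},a_{r(n)}))_n$ must converge computably to $0$ by Definition~\ref{definition:ncc}, yielding the desired contradiction.

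The main obstacle I anticipate is the non-monotonicity of $(a_n)_n$: the Downey--LaForte argument for left-computable $\beta$ uses that the approximation is monotone, so every promptness-forced move is unambiguously ``toward $\beta$'' and contributes cumulatively. For a general nearly computable $\beta$ the approximations may oscillate around $\beta$, so one must carefully track on which side of each $q_s$ the current approximation sits and ensure that the selector $r$ samples only at stages where a genuine, uniformly lower-bounded move has actually occurred. This is the same kind of obstacle that was overcome in Theorem~\ref{theorem:nearly-computable-weakly-1-generic} and Theorem~\ref{theorem:StephanWu-stronger}, where the left-computability assumption of Hoyrup and of Stephan--Wu was eliminated, and I would expect a comparable combinatorial argument keyed to the sign of $a_s-q_s$ at each promptness event to suffice here.
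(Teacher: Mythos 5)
Your overall mechanism (a promptness event exposes a disagreement between the reduction computed with the current approximation as oracle and the true value of $A$, which forces the approximation to cross a definite queried rational $q_s$) is the same engine that drives the paper's proof. But the final step, where the contradiction is supposed to come from, has a genuine gap. At a promptness event the rational $q_s$ you extract lies \emph{strictly between} $a_s$ and $\beta$, so $|a_s-q_s|\leq|a_s-\beta|\to 0$: the lower bounds on your selected jumps necessarily shrink to $0$. Infinitely many jumps, each positive but with vanishing lower bounds, contradict neither plain convergence of $\bigl(|a_{r(n+1)}-a_{r(n)}|\bigr)_n$ to $0$ (which holds automatically) nor its \emph{computable} convergence. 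To refute near computable Cauchyness you must defeat \emph{every} computable candidate modulus $\varphi_e$, and this requires coordinating, for each $e$, the size of the forced jump and the index at which it occurs with the values of $\varphi_e$. This is exactly what the paper's requirement structure does: at a stage $s$ devoted to $R_e$ it picks a precision $y$ such that $\varphi_e(y)<s$ and such that the current approximation $b_{r(s-1)}$ sits at distance more than $2^{-y}$ from both endpoints of the interval coded by the string $w$ with $(w,x,0)\in C$; a successful promptness event then forces the approximation into a disjoint interval coded by some $\widetilde{w}$ with $(\widetilde{w},x,1)\in C$, so $|b_{r(s)}-b_{r(s-1)}|>2^{-y}$ with $s-1\geq\varphi_e(y)$, which falsifies $\varphi_e$ as a modulus. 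Your sketch contains no analogue of this coupling between jump size, jump position, and the candidate modulus, so the concluding ``yielding the desired contradiction'' does not follow.

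There are two further points you should repair. First, prompt simplicity only guarantees, for each \emph{infinite} c.e.\ set $W_e$, the existence of \emph{one} pair $(s,x)$ with $x\in(W_e[s]\setminus W_e[s-1])\cap A[h(s)]$, not infinitely many promptness events as your outline assumes; the paper gets around this by building a separate c.e.\ set $U_e$ of test elements for each requirement $R_e$, proving that $U_e$ is infinite under the assumption that $R_e$ fails, and then using a single promptness event for that $U_e$ to satisfy $R_e$ after all. Second, the stage at which you place $x$ into your self-constructed set need not be the stage at which $x$ appears in the standard enumeration of that set, and $h$ must be applied to the latter; the paper handles this timing issue with the Slowdown Lemma of Ambos-Spies et al.\ (via the Recursion Theorem), waiting for the least $t$ with $x\in W_{g(e)}[t]$ and only then testing $x\in A[h(t)]$. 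Both of these are fixable, but together with the missing diagonalization against all computable moduli they mean your proposal, as written, does not yet yield the theorem.
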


For the proof the following definition and
the following observation due to Ambos-Spies et al.~\cite{AJSS84} will be useful.

\begin{definition}
\label{definition:strong-array}
A sequence $B:\IN\to\mathcal{P}(\IN)$ of finite subsets $B_n\subseteq\IN$ is called a {\em strong array} if
the function $b:\IN\to\IN$ defined by $b(n):=\sum_{i\in B_n} 2^i$ is computable.
\end{definition}

\begin{lemma}[Slowdown Lemma, Ambos-Spies et al.~\cite{AJSS84}]
Let $(U_k)_k$ be a strong array such that, for all $e,n$,
$U_{\langle e,n\rangle}\subseteq U_{\langle e,n+1 \rangle}$.
For each $e$, let $U_e:=\bigcup_{n\in\IN} U_{\langle e,n \rangle}$.
Then there exists a total computable function $g:\IN\to\IN$ such that, for all $e$,
$W_{g(e)}=U_e$ and, for all $e\in\IN$ and $s>0$,
$(U_{\langle e,s \rangle} \setminus U_{\langle e,s-1 \rangle}) \cap W_{g(e)}[s] = \emptyset$.
\end{lemma}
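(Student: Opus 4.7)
The plan is to define $g$ via the $s$-$m$-$n$ theorem combined with the parameterized Recursion Theorem, so that the program $\varphi_{g(e)}$ can refer to its own index and ``self-certify'' the required delay. First I would invoke the $s$-$m$-$n$ theorem to produce a total computable function $f:\IN^2\to\IN$ such that $\varphi_{f(e,i)}(x)$ runs the following procedure: search for the least $s$ with $x\in U_{\langle e,s\rangle}$ (diverging if no such $s$ exists); then check whether $(i,x)\in\{h_u(0),\ldots,h_u(s-1)\}$ --- a finite test, since $h_u$ is a total computable function --- diverging if the answer is yes and halting with output $0$ otherwise. Applying the parameterized Recursion Theorem then yields a total computable $g:\IN\to\IN$ with $\varphi_{g(e)}=\varphi_{f(e,g(e))}$ for every $e$, so that $\varphi_{g(e)}(x)$ effectively ``knows'' the index $g(e)$ under which its own halting will eventually be recorded in the universal enumeration $h_u$.

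Next I would verify $W_{g(e)}=U_e$. If $x\notin U_e$ the initial search never terminates, so $\varphi_{g(e)}(x)\uparrow$. If $x\in U_e$, let $s$ be least with $x\in U_{\langle e,s\rangle}$. Were the check to fail, we would have $(g(e),x)\in\{h_u(0),\ldots,h_u(s-1)\}\In h_u(\IN)$, hence $\varphi_{g(e)}(x)\downarrow$; but a failing check forces $\varphi_{g(e)}(x)\uparrow$, contradicting this. Therefore the check must succeed, $\varphi_{g(e)}(x)\downarrow$, and $x\in W_{g(e)}$.

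The same contradiction also delivers the delay condition for free. Suppose $s>0$ and $x\in U_{\langle e,s\rangle}\setminus U_{\langle e,s-1\rangle}$; then the least witness produced by the first step of the procedure is exactly $s$, and the success of the check, which we established above, reads $(g(e),x)\notin\{h_u(0),\ldots,h_u(s-1)\}$, i.e., $x\notin W_{g(e)}[s]$. Hence $(U_{\langle e,s\rangle}\setminus U_{\langle e,s-1\rangle})\cap W_{g(e)}[s]=\emptyset$, as required.

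The main obstacle is the apparent circularity in the self-inspection step, where the program peeks into the very enumeration that records its own halting. The parameterized Recursion Theorem is exactly the tool that dissolves this circularity; once the self-reference is in hand, the ``if my pair has already been enumerated by now, then diverge'' clause automatically enforces the delay, with no explicit padding and no assumption about how fast $h_u$ enumerates halting pairs. The verifications of correctness and delay then collapse into the single contradiction argument above.
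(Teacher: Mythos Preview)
Your proof is correct; the self-referential construction via the parameterized Recursion Theorem is the standard route, and your verification that the check must succeed (else $\varphi_{g(e)}(x)$ would both converge and diverge) cleanly establishes both $W_{g(e)}=U_e$ and the delay condition. The paper does not actually prove this lemma --- it merely states it and attributes it to Ambos-Spies et al.~\cite{AJSS84} --- but in a parenthetical remark inside the proof of Theorem~\ref{theorem:promptlysimple} it confirms that ``the Slowdown Lemma, which ensures the existence of the function $g$, is proved by the Recursion Theorem,'' which is exactly your approach.
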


In the rest of this section we identify a subset $A\subseteq \IN$
with its characteristic function $\chi_A:\IN\to\{0,1\}$.
So, for a number $n\in\IN$ we write
\[ A(n) := \chi_A(n) = \begin{cases}
     0 & \text{if } n \not\in A, \\
     1 & \text{if } n\in A,
     \end{cases} \]
and $A\upharpoonright n$ is the binary string $A(0)\ldots A(n-1)$ of length $n$.
In the proof of Theorem~\ref{theorem:promptlysimple} we shall use the following characterization of
Turing reducibility. 

\begin{lemma}
\label{lemma:Turing-reduction}
For two sets $A,B\subseteq \IN$ the following two conditions are equivalent.
\begin{enumerate}
\item
$A \leqT B$.
\item
There exists a c.e.~set $C \subseteq \{0,1\}^* \times\IN\times\{0,1\}$ with the following properties.
\begin{enumerate}
\item
For any triples $(w,n,z), (w',n',z') \in C$, if $w$ is a prefix of $w'$ and $n=n'$ then $z=z'$.
\item
For any $n$ there exists a number $\ell$ with $(B\upharpoonright \ell,n,A(n))\in C$.
\end{enumerate}
\end{enumerate}
\end{lemma}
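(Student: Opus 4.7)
The plan is to give the standard ``use principle'' equivalence between oracle Turing machines and c.e.\ sets of oracle axioms. The set $C$ should be viewed as a collection of axioms of the form ``if the oracle begins with $w$ then $A(n)=z$'', and conditions (a) and (b) say exactly that these axioms are consistent under prefix extension and are complete along $B$.

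For the direction $(1)\Rightarrow(2)$, I would fix an oracle Turing machine $M$ such that $M^B(n)=\chi_A(n)$ for every $n$, and define
\[ C := \{(w,n,z)\in\{0,1\}^*\times\IN\times\{0,1\} ~:~ M^w(n) \text{ halts with output } z \text{ using only oracle queries to positions } <|w|\}, \]
where $M^w$ runs $M$ with $w$ supplied as the initial segment of an oracle. This set is c.e.\ by standard dovetailing of simulations. Property (a) is immediate: if $w\subseteq w'$ and $M^w(n)$ halts with output $z$ while querying only positions $<|w|$, then the same computation, receiving identical answers at every query, runs on oracle $w'$ and halts with output $z$ while still only querying positions $<|w|\le|w'|$, so $(w',n,z)\in C$ as well, with the same $z$. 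For property (b), given any $n$, the computation $M^B(n)$ halts with output $A(n)$ after finitely many steps and finitely many oracle queries; choosing $\ell$ strictly larger than every queried position gives $(B\upharpoonright\ell,n,A(n))\in C$.

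For the direction $(2)\Rightarrow(1)$, given such $C$ I describe a $B$-oracle procedure computing $A$. On input $n$, dovetail an enumeration of $C$ with oracle access to $B$; each time a triple $(w,n,z)\in C$ with matching second coordinate is enumerated, test whether $w=B\upharpoonright|w|$ by making $|w|$ oracle queries, and if so output $z$ and halt. By (b) some triple $(B\upharpoonright\ell,n,A(n))\in C$ will eventually appear in the enumeration, so the procedure terminates. Correctness follows from (a): if the procedure halts with output $z$ after locating a witness $w$ satisfying $w=B\upharpoonright|w|$, and $(w^*,n,A(n))\in C$ is any witness from (b), then both $w$ and $w^*$ are prefixes of the characteristic function of $B$, so one is a prefix of the other, and property (a) forces $z=A(n)$.

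This is a routine use-principle translation, so I do not expect a genuine obstacle. The only care needed is in the forward direction, namely to encode the use bound into the length $|w|$ itself; that is precisely what makes property (a) the natural prefix-monotonicity statement and what lets the backward direction detect the correct oracle segment by direct comparison with $B$.
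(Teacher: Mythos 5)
Your proof is correct: the paper states Lemma~\ref{lemma:Turing-reduction} without proof, as a standard characterization of Turing reducibility, and your use-principle argument (enumerating oracle axioms $(w,n,z)$ with the use bounded by $|w|$, and recovering the reduction by comparing candidate strings $w$ against $B$) is exactly the intended standard argument. The only point worth making explicit is that property (a) in the forward direction also uses determinism of $M$: the run on $w'$ coincides step by step with the run on $w$, so $M^{w'}(n)$ has a unique output and hence $z'=z$; your phrasing covers this implicitly.
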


\begin{proof}[Proof of Theorem~\ref{theorem:promptlysimple}]
Let us consider a computably approximable number $\beta$
and a promptly simple set $A$ with $A \leqT \beta$. 
We can assume without loss of generality $0\leq \beta < 1$.
Let $B\subseteq \IN$ be the uniquely determined set with $0.B=\beta$
(note that $A$ is simple, hence, undecidable;
therefore, $A \leqT \beta$ implies that $\beta$ is not computable;
therefore there exists exactly one set $B\subseteq \IN$ with $\beta=0.B$,
and this set is undecidable, hence, coinfinite).
It is sufficient to show that $\beta$ is not nearly computable.
Let us consider an arbitrary computable sequence $(b_n)_n$ of rational numbers that converges to $\beta$.
It is sufficient to show that this sequence is not nearly computably Cauchy.
Hence, it is sufficient to show that there exists a strictly increasing computable function $r:\IN\to\IN$ such that
the sequence $(b_{r(n+1)} - b_{r(n)})_n$ (which converges to $0$, of course)
does not converge computably.
Thus, it is sufficient to construct a computable strictly increasing function $r:\IN\to\IN$ such that for all $e\in\IN$ 
the function $\varphi_e$ is not a modulus of convergence of the sequence $(b_{r(n+1)} - b_{r(n)})_n$.
So, we wish to construct a computable strictly increasing function $r:\IN\to\IN$ such that for all $e\in\IN$ 
the following requirement $R_e$ is satisfied:
\[ R_e: \text{if } \varphi_e \text{ is total then }
   (\exists m,n\in\IN) \ (m\geq \varphi_e(n) \text{ and } |b_{r(m+1)} - b_{r(m)}| > 2^{-n}) . \]

Let $h:\IN\to\IN$ be a computable function that shows the prompt simplicity of $A$, that is, a function such that
for every $e\in\IN$, if $W_e$ is infinite then there exist $s>0$ and $x\in\IN$
such that $x \in (W_e[s] \setminus W_e[s-1]) \cap A[h(s)]$.
Let $C \subseteq \{0,1\}^* \times\IN\times\{0,1\}$ be a c.e. set as in Lemma~\ref{lemma:Turing-reduction}
applied to $A$ and to the coinfinite set $B$ with $0.B=\beta$, that is,
$C$ shows as in Lemma~\ref{lemma:Turing-reduction} that $A\leqT \beta$.

We shall construct the desired function $r$ in a sequence of stages; the value $r(s)$ will be defined in stage $s$.
At the same time we are going to construct a certain strong array $(U_k)_k$ 
such that for all $e,n$, $U_{\langle e,n \rangle}\subseteq U_{\langle e,n+1 \rangle}$.
We shall write $U_e[s]$ for $U_{\langle e,s \rangle}$.
Let $g:\IN\to\IN$ be a function as in the Slowdown Lemma, applied to $(U_k)_k$.

We start in stage $s=0$ by setting $r(0):=0$ and $U_e[0]:=\emptyset$, for all $e\in\IN$.
Then we continue with stage $s=1$.

Let us now describe what we do in a stage $s>0$.
First we check whether there exists a number $e<s$ with the following properties:
\begin{enumerate}
\item
$\{0,\ldots, |U_e[s-1]|\} \subseteq W_e[s]$
(note that the requirement $R_e$ is nontrivial only for numbers $e$ with $W_e=\IN$),
\item
$(\forall n \in W_e[s])\, (\forall m < s-1) \,
  (\text{if } m \geq \varphi_e(n) \text{ then } |b_{r(m+1)} - b_{r(m)}| \leq 2^{-n})$
(this means that the restriction of $\varphi_e$ to the set $W_e[s]$ looks as if $\varphi_e$
might be a modulus of convergence of the sequence $(b_{r(m+1)} - b_{r(m)})_m$),
\item
there exists a binary string $w$ satisfying $|w|\leq s$ and $0.w < b_{r(s-1)} < 0.w+2^{-|w|}$ 
(this implies that $0.w$ is a prefix of the binary representation of $b_{r(s-1)}$)
and such that 
the smallest number $y$ with $2^{-y} < \min\{b_{r(s-1)} - 0.w, 0.w+2^{-|w|} - b_{r(s-1)}\}$
is an element of $W_e[s]$ and satisfies $\varphi_e(y)<s$
and such that there exists a number $x\in\{0,1,\ldots,s\} \setminus U_{e}[s-1]$
with $A(x)[s]=0$ and $(w,x,0)\in C[s]$.
\end{enumerate}
If there does not exist such a number $e$ then we set $r(s):=r(s-1)+1$ and
$U_{\widetilde{e}}[s]:=U_{\widetilde{e}}[s-1]$, for all $\widetilde{e}$,
we end this stage and continue with stage $s+1$.

If there exists such a number $e$ then we fix the smallest such $e$,
say that $R_e$ {\em receives attention at stage $s$} and proceed as follows.
First we fix the shortest binary string $w$ satisfying the third condition above.
Then we fix the smallest number $x$ as in the third condition above.
We enumerate $x$ into the set $U_e$, that is, we set $U_e[s] := U_e[s-1] \cup\{x\}$
(and we set $U_{\widetilde{e}}[s]:=U_{\widetilde{e}}[s-1]$ for all $\widetilde{e} \in \IN\setminus\{e\}$).
We search for the smallest number $t$ with $x\in W_{g(e)}[t]$
(note that the Slowdown Lemma, which ensures the existence of the function $g$,
is proved by the Recursion Theorem;
for a justification why we can use the function $g$ here,
in the definition of $(U_k)_k$,
see~\cite[Chapter II.3]{Soa87}).
Such a number $t$ exists, and it is strictly greater than $s$.
Now we check whether $x\in A[h(t)]$. If this is not the case then
we say that the current attempt to satisfy the requirement $R_e$ has failed,
we set $r(s):=r(s-1)+1$, and we continue with stage $s+1$. 

If, on the other hand, $x\in A[h(t)]$ then let $u$ be the smallest number strictly greater than $r(s-1)$
such that there exists a binary string $\widetilde{w}$
with $0.\widetilde{w} < b_u < 0.\widetilde{w} + 2^{-|\widetilde{w}|}$ and
$(\widetilde{w},x,1) \in C[u]$. We set $r(s):=u$ and continue with stage $s+1$.
In this case we say that the current attempt to satisfy the requirement $R_e$ was successful.
This ends the description of the algorithm.

We come to the verification. 
It is clear that the constructed function $r:\IN\to\IN$ is strictly increasing and computable. 
We have to show that it satisfies all requirements $R_e$.
For the sake of a contradiction, let us assume that this is not the case.
Let $e$ be the smallest number such that the requirement $R_e$ is not satisfied.
Then $\varphi_e$ is a total function and a modulus of convergence of the sequence
$(b_{r(m+1)} - b_{r(m)})_m$.

First, we show that the requirements $R_d$ with $d<e$ will receive attention only finitely often.
For $d$ such that $\varphi_d$ is not a total function this is due to the condition
$\{0,\ldots, |U_d[s-1]|\} \subseteq W_d[s]$
(if $R_d$ would receive attention infinitely often then $U_d$ would be infinite
and this condition would imply $W_d=\IN$).
And if $d<e$ has the property that $\varphi_d$ is total and $R_d$ is satisfied then 
there exist $m,n$ with $m\geq \varphi_d(n)$ and $|b_{r(m+1)} - b_{r(m)}| > 2^{-n}$,
and for sufficiently large $s$ we have $n \in W_d[s]$ and $m<s-1$.
So, we have shown that the requirements $R_d$ with $d<e$ receive attention only finitely often.

Next, we show that $R_e$ will receive attention infinitely often.
For the sake of a contradiction let us assume
that this is not the case and that there is some $s_0>0$
such that no $R_d$ with $d\leq e$ will receive attention at any stage $s\geq s_0$.
Then for $s\geq s_0$ the set $U_e[s-1]$ does not change anymore,
hence, $U_e[s-1]=U_e$, hence, for sufficiently
large $s$ we have $\{0,\ldots, |U_e[s-1]|\} \subseteq W_e[s]$.
As we assume that $R_e$ is not satisfied,
the function $\varphi_e$ is a total function and a modulus of convergence
of the sequence $(b_{r(m+1)} - b_{r(m)})_m$.
Hence, the second condition above is satisfied as well.
As $A$ is coinfinite let us consider some $x \in\IN\setminus A$ that is not an element of $U_e$.
Then $A(x)=0$, hence, for all $s$, $A(x)[s]=0$, and for sufficiently large $s$ we have $x \leq s$, hence,
$x\in\{0,1,\ldots,s\}\setminus U_e[s-1]$.
There exists a number $\ell$ such that  $(B\upharpoonright \ell,x,0)\in C$.
We set $w:=B\upharpoonright \ell$.
For sufficiently large $s$ we have $|w|\leq s$ and $(w,x,0) \in C[s]$.
As the sequence $(b_k)_k$ converges to $\beta=0.B$ for $k$ tending to $\infty$ and $B$ is not decidable,
for sufficiently large $s$ we have $0.w< b_{r(s-1)} < 0.w+2^{-|w|}$.
Furthermore, for sufficiently large $s$ the smallest number $y$
with $2^{-y} < \min\{b_{r(s-1)} - 0.w, 0.w+2^{-|w|} - b_{r(s-1)}\}$
is an element of $W_e[s]$ (because $W_e=\IN$) and satisfies $\varphi_e(y)< s$.
We conclude that $R_e$ will receive attention at a sufficiently large stage $s\geq s_0$. Contradiction.
We have shown that $R_e$ will receive attention infinitely often.

This implies that the set $U_e=W_{g(e)}$ is infinite. 
Now the fact that $A$ is promptly simple and that the function $h$ shows this imply that there
exist numbers $x$ and $t>0$ with $x \in (W_{g(e)}[t] \setminus W_{g(e)}[t-1]) \cap A[h(t)]$.
Then there must exist some $s$ with $0<s<t$ such that $x\in U_e[s] \setminus U_e[s-1]$,
and this is possible only if $R_e$ receives attention in stage $s$.
Hence, there must exist a binary string $w$ with $|w|\leq s$ and $0.w < b_{r(s-1)} < 0.w+2^{-|w|}$
and such that the smallest number $y$ with
$2^{-y} < \min\{b_{r(s-1)} - 0.w, 0.w+2^{-|w|} - b_{r(s-1)}\}$
is an element of $W_e[s]$ and satisfies $\varphi_e(y)<s$,
and such that $x$ is the smallest number in $\{0,1,\ldots,s\} \setminus U_{e}[s-1]$
satisfying $A(x)[s]=0$ and $(w,x,0)\in C[s]$.
On the other hand, as $x\in A[h(t)]$, in the algorithm we define $r(s)$ is such a way that for some 
binary string $\widetilde{w}$ we have $0.\widetilde{w} < b_{r(s)} < 0.\widetilde{w}+2^{-|\widetilde{w}|}$ and
$(\widetilde{w},x,1) \in C[r(s)]$.
The two facts $(w,x,0) \in C$ and $(\widetilde{w},x,1) \in C$ imply
that none of the two strings $w$ and $\widetilde{w}$ can be a prefix of the other one.
Hence, the intervals $(0.w,0.w+2^{-|w|})$ and
$(0.\widetilde{w},0.\widetilde{w}+2^{-|\widetilde{w}|})$ have empty intersection.
As the distance of $b_{r(s-1)}$, which is an element of the interval $(0.w,0.w+2^{-|w|})$,
to the endpoints of this interval is larger than $2^{-y}$
and as $b_{r(s)}$ is an element of the interval $(0.\widetilde{w},0.\widetilde{w}+2^{-|\widetilde{w}|})$,
which is disjoint from the interval $(0.w,0.w+2^{-|w|})$,
we conclude that $|b_{r(s-1)} - b_{r(s)}|>2^{-y}$.
But in combination with $\varphi_e(y)<s$ this means that $R_e$ is satisfied after this stage $s$.
Contradiction.
\end{proof}

Corollary~\ref{corollary:nc-not-closed-under-comp-functions}
leaves open the question 
which real numbers can appear as values $f(\alpha)$ for nearly computable numbers $\alpha$
and computable real functions $f$.
Here Theorem~\ref{theorem:promptlysimple} restricts the possibilities.

\begin{corollary}
\label{cor:range-function-nc}
If $\beta$ is a nearly computable number and
$f:\subseteq\IR\to\IR$ a computable function with $\beta\in\dom(f)$
then there does not exist a promptly simple set $A$ with $A\leqT f(\beta)$.
\end{corollary}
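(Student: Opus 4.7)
The plan is to reduce this corollary to Theorem~\ref{theorem:promptlysimple} via transitivity of Turing reducibility, once the auxiliary fact $f(\beta) \leqT \beta$ is established. Suppose for contradiction that a promptly simple set $A$ satisfies $A \leqT f(\beta)$; combining this with $f(\beta) \leqT \beta$ by transitivity would give $A \leqT \beta$, contradicting Theorem~\ref{theorem:promptlysimple} applied to the nearly computable real $\beta$.

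The core step is therefore to prove $f(\beta) \leqT \beta$ for an arbitrary computable $f:\subseteq\IR\to\IR$ with $\beta \in \dom(f)$. I will use the characterization of the Turing degree of a real from Section~\ref{section:promptly-simple}: the degree of $\gamma$ equals the degree of $L_\gamma := \{n \in \IN : \nu_\IQ(n) < \gamma\}$. With oracle access to $L_\beta$, one can first locate an integer interval containing $\beta$ and then, by querying the oracle on a sufficiently fine rational net, produce a sequence $(q_k)_k$ of rationals, computable in $L_\beta$, with $|q_k - \beta| < 2^{-k}$. Feeding these approximations into the effective neighborhood information for $f$ at $\beta$ supplied by Definition~\ref{definition:computable-function} yields a sequence $(r_k)_k$ of rationals, computable in $L_\beta$, with $|r_k - f(\beta)| < 2^{-k}$.

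To extract $L_{f(\beta)}$ from $(r_k)_k$, for each $n$ I run in parallel the two semi-decisions ``$\nu_\IQ(n) < r_k - 2^{-k}$'' and ``$\nu_\IQ(n) > r_k + 2^{-k}$''; if either fires for some $k$, the membership of $n$ in $L_{f(\beta)}$ is decided. When $f(\beta)$ is irrational, exactly one fires for every $n$, so both $L_{f(\beta)}$ and its complement are c.e.\ in $L_\beta$ and hence $L_{f(\beta)} \leqT L_\beta$. When $f(\beta)$ is rational, the procedure fails precisely on the (at most one) $n$ with $\nu_\IQ(n) = f(\beta)$, but rationality of $f(\beta)$ makes it a computable real, so $L_{f(\beta)}$ is outright decidable. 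Either way $f(\beta) \leqT \beta$, which closes the argument. The only real subtlety is this rational/irrational case split for the target value of $f$; everything beyond it is routine transitivity.
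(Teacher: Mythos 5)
Your proposal is correct and takes essentially the same route as the paper: the paper likewise deduces the corollary from Theorem~\ref{theorem:promptlysimple} by transitivity of $\leqT$, simply citing Ko~\cite{Ko84} for the fact $f(\beta)\leqT\beta$ that you work out in detail. Your cut-based argument for that fact (including the rational/irrational case split, which could even be bypassed since a computable $f(\beta)$ cannot compute a promptly simple set) is a sound substitute for the citation.
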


Note that there are promptly simple sets~\cite[Exercise XIII.1.10]{Soa87}.

\begin{proof}[{Proof of Corollary~\ref{cor:range-function-nc}}]
If $f:\subseteq\IR\to\IR$ is a computable function and $\beta\in\dom(f)$
then $f(\beta) \leqT \beta$ (compare Ko~\cite{Ko84}).
Hence, if there were a promptly simple set $A$ with $A\leqT f(\beta)$
then the transitivity of $\leqT$ would give us
$A \leqT \beta$, and by Theorem~\ref{theorem:promptlysimple}
this is not possible if $\beta$ is nearly computable.
\end{proof}

\section{Outlook}

There is an interesting proper hierarchy of real closed subfields
between the field $\EC$ of computable numbers
and the field $\NC$ of nearly computable numbers;
see \cite{Jan18} and a forthcoming paper by the authors.


\end{document}